\documentclass[11pt]{amsart}

\usepackage{amssymb}
\usepackage{amsmath}
\usepackage{mathrsfs}
\usepackage{amsfonts}
\usepackage{color}
\usepackage{amsthm}
\usepackage{graphicx}
\usepackage[colorlinks=true, pdfstartview=FitV, linkcolor=blue,citecolor=blue, urlcolor=blue]{hyperref}
\usepackage{verbatim}

\usepackage[utf8]{inputenc}
\usepackage[T1]{fontenc}
\numberwithin{equation}{section}


\theoremstyle{definition}
\newtheorem{defi}{Definition}[section]
\newtheorem{rem}[defi]{Remark}
\theoremstyle{plain}
\newtheorem{theorem}[defi]{Theorem}

\newtheorem{lem}[defi]{Lemma}

\newtheorem{prop}[defi]{Proposition}

\numberwithin{equation}{section}

\def\R{\mathbb{R}}
\def\Z{\mathbb{Z}}

\def\C{\mathbb{C}}
\def\N{\mathbb{N}}

\renewcommand{\r}{\mathbb{R}}

\newcommand{\eps}{\varepsilon}
\newcommand{\RRe}{\mathrm{Re}\,}
\newcommand{\IIm}{\mathrm{Im}\,}

\allowdisplaybreaks

\begin{document}

\title{Rate of convergence to equilibrium \\ for the heated string}
\author{Tomasz Cie\'{s}lak}
\address{Institute of Mathematics, Polish Academy of Sciences, 00-656 Warsaw, Poland}
\email{cieslak@impan.pl}
\author{Jacek Jendrej}
	\address{Institut de Math\'emtiques de Jussieu, Sorbonne Universit\'e, 75005 Paris, France \& Faculty of Applied Mathematics, AGH University of Science and Technology, 30-059 Krak\'ow, Poland}
	\email{jendrej@imj-prg.fr}
\author{Christian Stinner}
\address{Institute of Mathematics, Polish Academy of Sciences, 00-656 Warsaw, Poland}
\email{stinner@mathematik.tu-darmstadt.de}
\date{\today}
%
\begin{abstract}
In the present manuscript, we calculate the exponential rate of convergence of the heated string system (a mixed-type
hyperbolic-parabolic system of PDEs) towards the equilibrium, independently of the initial data. As a by-product of our analysis, we obtain an enhanced time decay of the solution.

The main tool of our reasoning consists of asymptotic analysis at the Fourier side of the linearized problem in the spirit of Kato. The latter method fits very well to the estimates obtained earlier for the system in \cite{BC25}. The matching between the linear problem and the nonlinear original system requires delicate estimates of the nonlinear terms at the Fourier side as well as careful spectral analysis.
\end{abstract}

\maketitle

\keywords{Keywords}: heated string, asymptotic analysis, semigroups, perturbation methods

\subjclass{MSC2020}: 35M13, 35B35, 35B20

\section{Introduction}

This paper is devoted to the asymptotic studies of the mechanics of the heated string. The model we consider to describe the heated string is a 1d thermoelasticity system of PDEs given by the most natural (and simplest) Helmholtz free energy: $\theta \log \theta -\theta +\mu\theta u_x+ \frac{1}{2} u_x^2$.
Under such a choice of the Helmholtz free energy, also picking up the Fick (Fourier) law, one arrives at the hyperbolic-parabolic mixed type system of PDEs describing the phenomenon, the details can be found for instance in \cite{slemrod}.

To be more precise, we study the problem
\begin{equation}\label{eq1}
  \begin{cases}
	  u_{tt} - u_{xx} = \mu \theta_x, & \mbox{in } (0,\infty) \times (0,\pi), \\
		\theta_t - \theta_{xx} = \mu \theta u_{tx}, & \mbox{in } (0,\infty) \times (0,\pi), \\
		u(\cdot, 0) = u(\cdot, \pi) = \theta_x (\cdot, 0) = \theta_x (\cdot, \pi) = 0, & \mbox{in } (0,\infty), \\
		u(0,\cdot) = u_0, \; u_t (0,\cdot) = v_0, \; \theta (0,\cdot) = \theta_0 >0,
	\end{cases}
\end{equation}
where $u(t,x)$ denotes the position of a string fixed at the ends and $\theta (t,x)$ is its temperature. Moreover, $v_0$ is the
initial velocity and $\mu$ a material constant. W.l.o.g. we restrict ourselves to the spatial interval $(0,\pi)$, which seems to
be appropriate for Fourier expansion. We study the problem under the following regularity assumptions on initial data:
\begin{equation}\label{regularity}
 u_0\in H^2(0,\pi)\cap H^1_0(0,\pi),\; v_0\in H^1_0(0,\pi),\; \theta_0\in H^1(0,\pi).
\end{equation}
Under such a regularity of initial data, it is known that the unique solution to \eqref{eq1} exists for any time $t>0$, see \cite{BC23}.
Moreover, in \cite{BC25} it was shown that for any initial data in the regularity class \eqref{regularity} (plus the positivity of initial temperature), the solution converges as time goes to infinity to the flat string with uniformly distributed temperature. In the present paper we study the rate of the latter convergence. As a by-product, we obtain the time decay enhancement for the solution to \eqref{eq1}. Our main theorem reads
\begin{theorem}\label{main}
Let $(u, \theta)$ be the solution to \eqref{eq1} for initial data satisfying the regularity assumptions \eqref{regularity}, with positive initial temperature $\theta_0$. Assume $s\in (\frac{3}{4},1)$. Then, $u$ converges to $0$ in $H^{1+s}(0,\pi)$, $(u_t,\theta)$ converges to $(0, \theta_\infty)$ in $H^s(0,\pi)$, and all the convergences are exponential in time, where the number $\theta_\infty$ is the constant that will be defined in \eqref{theta_in} in Section \ref{preliminaries}.
\end{theorem}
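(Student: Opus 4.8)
The plan is to diagonalize the linearization in the Fourier basis adapted to the boundary conditions, so that the evolution splits into an uncoupled family of $3\times3$ ODE systems indexed by the frequency $k\ge1$ (plus a scalar zeroth mode), to exhibit a spectral gap for these systems which is uniform in $k$, and then to close a Duhamel bootstrap in the Sobolev scale, taking the qualitative convergence of \cite{BC25} as the starting point. Expanding $u(t,\cdot)=\sum_{k\ge1}a_k(t)\sin(kx)$ and $\theta(t,\cdot)=\theta_\infty+b_0(t)+\sum_{k\ge1}b_k(t)\cos(kx)$ automatically encodes the Dirichlet and Neumann conditions. A direct computation from \eqref{eq1} shows that $\int_0^\pi\big(\tfrac12u_t^2+\tfrac12u_x^2+\theta\big)\,dx$ is conserved in time; this fixes $\theta_\infty$ (positive since $\theta_0>0$) and forces $b_0(t)=-c_0\sum_{k\ge1}\big(|a_k'(t)|^2+k^2|a_k(t)|^2\big)$ for a fixed $c_0>0$, so the zeroth temperature mode is slaved to the mechanical energy and is therefore quadratically small --- this is exactly the source of the announced enhanced decay. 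Writing $X_k=(a_k,a_k',b_k)^\top$ for $k\ge1$, the system \eqref{eq1} becomes $\dot X_k=M_kX_k+N_k(t)$ with
\[
M_k=\begin{pmatrix}0&1&0\\-k^2&0&-\mu k\\0&\mu\theta_\infty k&-k^2\end{pmatrix},\qquad N_k=(0,0,n_k)^\top,
\]
where $n_k$ collects the genuinely nonlinear part of $\mu\vartheta u_{tx}$ (here $\vartheta:=\theta-\theta_\infty$) together with $\mu b_0(t)\,ka_k'$; note that the $u$-equation is exactly linear in the perturbation, so all error terms are carried by the third, parabolic component and each of them is at least quadratic.

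Next I would establish the uniform spectral gap. The characteristic polynomial of $M_k$ is $P_k(\lambda)=\lambda^3+k^2\lambda^2+(1+\mu^2\theta_\infty)k^2\lambda+k^4$, and since $\theta_\infty>0$ the Routh--Hurwitz conditions hold for every $k$ (the non-trivial one being $k^2\cdot(1+\mu^2\theta_\infty)k^2-k^4=\mu^2\theta_\infty k^4>0$), so $\mathrm{Re}\,\sigma(M_k)<0$ for all $k\ge1$. An asymptotic expansion of the roots of $P_k$ for large $k$, in the spirit of Kato, yields the two hyperbolic eigenvalues $\pm ik-\tfrac12\mu^2\theta_\infty+o(1)$ and the parabolic one $-k^2+\mu^2\theta_\infty+o(1)$; together with the finitely many low modes this produces $\gamma>0$ with $\mathrm{Re}\,\sigma(M_k)\subset(-\infty,-\gamma]$ for all $k$. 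To obtain semigroup bounds with a constant independent of $k$ --- necessary because $\|M_k\|\to\infty$ --- I would conjugate $M_k$ by the diagonal weight $W_k=\mathrm{diag}(k^{1+s},k^s,\theta_\infty^{-1/2}k^s)$, which turns $M_k$ into $kA+k^2B$ with $A$ skew-symmetric and $B=\mathrm{diag}(0,0,-1)\le0$; the pair $(A,B)$ satisfies the Shizuta--Kawashima condition (no eigenvector of $A$ lies in $\ker B$), and since the frequencies $k$ are integers bounded away from $0$ the corresponding compensated-dissipation functional gives $\|e^{tM_k}\|_{W_k}\le Ce^{-\gamma t}$ with $C,\gamma$ independent of $k$. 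Summing over $k$, the linearized propagator is exponentially contracting on $\mathcal X:=H^{1+s}\times H^s\times H^s$; the derivative gap $1$ between the $u$- and $u_t$-slots is precisely what makes $W_k$ homogeneous, which is the origin of the exponent $1+s$.

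For the nonlinear step, Duhamel's formula mode by mode reads $X_k(t)=e^{tM_k}X_k(0)+\int_0^te^{(t-\tau)M_k}N_k(\tau)\,d\tau$, and since $N_k$ lives only in the parabolic slot the forcing enjoys parabolic smoothing; summing over $k$ one obtains, for some $\alpha\in(0,1)$, an estimate of the form $\big\|\int_0^te^{(t-\tau)M_\bullet}N_\bullet\,d\tau\big\|_{\mathcal X}\lesssim\int_0^t(t-\tau)^{-\alpha}e^{-\gamma(t-\tau)}\big(\|\mu\vartheta u_{tx}\|_{H^{s-1}}+|b_0(\tau)|\,\|u_t(\tau)\|_{H^s}\big)\,d\tau$, and one-dimensional product estimates --- valid precisely for $s>\tfrac34$ once the parabolic gain is accounted for, while $s<1$ reflects the regularity \eqref{regularity} of the data --- give $\|\mu\vartheta u_{tx}\|_{H^{s-1}}+|b_0|\,\|u_t\|_{H^s}\lesssim\|(u,u_t,\vartheta)\|_{\mathcal X}^2$. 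Since \cite{BC23} provides a uniform-in-time bound and \cite{BC25} (combined, if needed, with interpolation against the $H^2\times H^1\times H^1$ bound) gives $T_0$ with $\|(u,u_t,\vartheta)(T_0)\|_{\mathcal X}\le\eta$ arbitrarily small, running Duhamel from $T_0$ with $\phi(t):=\sup_{T_0\le\tau\le t}e^{\gamma'\tau}\|(u,u_t,\vartheta)(\tau)\|_{\mathcal X}$ for a fixed $\gamma'\in(0,\gamma)$ yields $\phi(t)\le C\eta+Ce^{-\gamma'T_0}\phi(t)^2$; taking $T_0$ large closes the bootstrap, so $\|(u,u_t,\vartheta)(t)\|_{\mathcal X}\lesssim e^{-\gamma't}$ for $t\ge T_0$, and together with the a priori bound on $[0,T_0]$ this gives the stated exponential convergences, with $b_0(t)=O(e^{-2\gamma't})$ providing the enhanced decay.

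I expect the main difficulties to be twofold. First, making the per-mode spectral gap quantitative and uniform in $k$: Routh--Hurwitz settles each individual mode but yields no control of constants, and because $\|M_k\|\to\infty$ one must extract the $k$-independent exponential bound from the Kawashima/energy structure (equivalently, bound the eigenprojections of $M_k$ uniformly in the $W_k$-norm). Second --- the point emphasized in the abstract --- the Fourier-side estimates of the nonlinear term $\mu\vartheta u_{tx}$ and of the slaved mode $b_0$ must be carried out in exactly the space $\mathcal X$ dictated by $s\in(\tfrac34,1)$, so that the linear decay rate $\gamma$ survives the Duhamel loop; it is this matching between the spectral analysis and the nonlinear terms that pins down the admissible range of $s$.
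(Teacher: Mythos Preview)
Your overall strategy matches the paper's: Fourier-decompose, analyze the per-mode $3\times3$ matrix, establish a uniform spectral gap, and close a Duhamel argument using the qualitative convergence from \cite{BC25} to reach small data. Several implementation choices differ and are worth comparing. You invoke the Shizuta--Kawashima structure $W_kM_kW_k^{-1}=kA+k^2B$ to obtain $\|e^{tM_k}\|_{W_k}\le Ce^{-\gamma t}$, whereas the paper computes the eigenvalues and eigenvectors of $A_{n,\theta_\infty}^\ast$ explicitly to second order (Lemmas~\ref{lem3.3} and~\ref{lem3.2}) and works in the resulting approximate eigenbasis. You run a bootstrap with the norm $\sup_te^{\gamma'\tau}\|\cdot\|_{\mathcal X}$; the paper uses a Banach contraction in a space with the supremum \emph{inside} the $\ell^2$ sum, $|z|_s=(\sum_n n^{2s}\sup_t e^{2\alpha t}|z_n(t)|^2)^{1/2}$, and this is why its discrete-convolution estimates (Lemma~\ref{lem4.2}) genuinely require $s>\tfrac34$; with your sup-outside norm and physical-space product rules, $s>\tfrac12$ would already suffice, so your explanation of why $s>\tfrac34$ is forced does not match your own setup. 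Your use of energy conservation to slave $b_0$ algebraically to the mechanical modes is a clean shortcut; the paper keeps $\hat\theta_0$ as a dynamical variable in the fixed point and recovers its $O(e^{-2\alpha t})$ decay a posteriori.

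There is, however, one point where your argument is incomplete, and it is exactly the step you flag as the hardest. The Kawashima energy method yields the uniform bound $\|e^{tM_k}\|_{W_k}\le Ce^{-\gamma t}$, but it does \emph{not} by itself give the smoothing you claim for forcing in the parabolic slot: applying Duhamel with only this bound forces $\|N_k\|_{W_k}\sim k^s|n_k|$, i.e.\ $\vartheta u_{tx}\in H^s$, which is one derivative too many. What you actually need is that $e^{tM_k}(0,0,1)$ splits into a parabolic piece $\sim e^{-k^2t}$ plus a hyperbolic piece of size $O(k^{-1})e^{-\gamma t}$; equivalently, that the hyperbolic eigenprojections of $M_k$ have third row of size $O(k^{-1})$. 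This is precisely the content of the paper's eigenvector asymptotics and of the explicit formula \eqref{cinv} for $C_n^{-1}$, where the third-column entries in the second and third rows are $O(1/n)$; it is what produces the $1/n$ prefactor on $g_{3,n}$ in $F_{2,n},F_{3,n}$ (see \eqref{sysprojf}) and allows Lemma~\ref{lem4.2} to be stated for $\tfrac1n g_{3,n}$ rather than $g_{3,n}$. Your Kawashima conjugation does not deliver this refined structure, so to complete your argument you would still have to carry out the eigenvector expansion the paper performs, or an equivalent computation bounding the off-diagonal eigenprojections.
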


The exponential decay results from the coupling between the wave component and the heat component at the linear level.
We will show that, at high Fourier modes, the coupling induces an effect analogous to including a standard damping term in a wave equation, which yields exponential decay at uniform rate of all Fourier modes (hence no gain of regularity).

%
%
Let us comment on the proof. Our method is an asymptotic analysis type calculation (in the spirit of the Kato book \cite{Kato}) on the Fourier side.
We make use of the convergence of the solution to the equilibrium in $H^s(0,\pi)$ for $s \in (\frac{3}{4},1)$, which is a consequence of the convergence result in \cite{BC25}, the precise statement is given in Section \ref{preliminaries}. The Fourier expansion of the problem \eqref{eq1} is given in Section \ref{Fourier}. Next, in Section \ref{perturbation}, we analyze the spectral properties of the linearization of \eqref{eq1} at the equilibrium. Moreover, the first steps of the argument matching the linear estimates and the original nonlinear problem are taken (it requires the estimates of the exact order of convergence of the eigenvalues and eigenvectors of the linearized system), some linear algebra methods, including the Gershgorin circles to analyze the eigenvalues, are used. Having prepared the framework for the matching between the linear and the original problem, we furnish it with the estimates (the time-dependent variant of weighted $l_2$ estimates of sequences of Fourier expansions) in Section \ref{estimates}. Finally, our asymptotic construction closes up in Section \ref{contraction} in a proper fixed point Banach contraction argument. In Section \ref{conlusion}, we use the previous construction to prove the main theorem.

Finally, let us compare our result to the already existing in the literature. It was Slemrod \cite{slemrod}, who first considered the nonlinear 1d thermoelasticity model. However, the system of PDEs studied in \cite{slemrod} does not cover \eqref{eq1}. Though, seemingly more general, due to the considered assumptions, the system in \cite{slemrod} does not allow the nonlinear term in the heat equation to depend on $\theta$ in a way, which would allow the positivity of $\theta$. Moreover, under the assumptions in \cite{slemrod}, the second principle of thermodynamics does not hold (unlike in our system, see \cite{BC25}, where the quantitative version of the second principle of thermodynamics is used in the proof of convergence of the solution towards equilibrium). Still, under his restrictive assumptions, Slemrod shows local in time existence of solutions. Next, he also shows, that when starting close to the equilibrium, solutions exist for any time and converge to the equilibrium.

The articles \cite{jiang} and \cite{racke_shibata} address the system of PDEs very similar to the one studied in \cite{slemrod}. Again, the structural assumptions exclude applicability to \eqref{eq1} (or a similar system of PDEs satisfying positivity of temperature and second principle of thermodynamics). But, in those restrictive models, additional information on the rate of convergence to equilibria is obtained, still, only for initial data starting close to equilibrium. In \cite{racke_shibata}, the polynomial rate of convergence for the Dirichlet problem in both, temperature and displacement, is obtained. In \cite{jiang}, the exponential rate holds, this time for the Neumann problem in temperature. Again, the initial data need to be close to equilibrium in quite a restrictive regularity class. And, as we already mentioned, the system studied in \cite{jiang} does not overlap with our system \eqref{eq1}. It is also important to mention the work \cite{Munoz} of Mu\~noz Rivera, where the linearization of our system, however, under Dirichlet data, is studied. The exponential rate then follows (but the problem is then much simpler and the smart use of an energy method gives the desired rate of convergence). To summarize, we faced a completely new and quite challenging problem. We had to employ a refined asymptotic analysis technique, including the perturbation of linear unbounded operators and match it with the non-trivial estimate obtained in \cite{BC25}.

\section{Preliminaries}\label{preliminaries}
In this section we recall some estimates for the solutions of \eqref{eq1} and refine them a bit, so that they are applicable in our studies.
First, we recall the fact that the energy associated to \eqref{eq1}, is conserved along the trajectory of a solution to \eqref{eq1}:
\begin{equation}\label{energy}
\frac{d}{dt}E(t)=0, \;\mbox{where}\; E(t)=\frac{1}{2}\int_0^\pi u_t^2 dx+\frac{1}{2}\int_0^\pi u_x^2 dx +\int_0^\pi \theta dx.
\end{equation}
Next, we mention that in \cite[Theorem 1.1]{BC25} it is proven that $u(t, \cdot)\rightarrow 0, u_t(t, \cdot)$ $\rightarrow 0$ in $H^1(0,\pi)$ and $L^2(0,\pi)$, respectively. Moreover,
\begin{equation}\label{theta_in}
\theta(t)\rightarrow \theta_\infty:=\frac{1}{\pi}E(0)\;\mbox{in}\; L^2(0,\pi) \mbox{ as } t \to \infty.
\end{equation}
In addition, \cite[Theorem 4.1 and (21)]{BC25} shows that there exists $C>0$ such that for any $t>0$
\begin{equation}\label{oszacowanie}
\|u(t, \cdot)\|_{H^2(0,\pi)}+\|u_t(t, \cdot)\|_{H^1(0,\pi)}+\|\theta(t, \cdot)\|_{H^1(0,\pi)}\leq C.
\end{equation}
Interpolating between \eqref{oszacowanie} and convergence in the above mentioned \cite[Theorem 1.1]{BC25}, we obtain the following proposition.
\begin{prop}\label{zbieznosc}
Assume that initial data $u_0, v_0, \theta_0$ satisfy \eqref{regularity}. Moreover, assume $\theta_0>0$. Let $(u, \theta)$ be the unique solution to \eqref{eq1} (such a solution was obtained in \cite{BC23} for any such initial data) emanating from $(u_0, v_0, \theta_0)$. Then, as time $t\rightarrow \infty$, $(u_x, u_t, \theta)$ converges towards $(0, 0, \theta_\infty)$ in $H^s(0,\pi)$, for any $0\leq s<1$.
\end{prop}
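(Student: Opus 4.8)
The plan is to obtain Proposition \ref{zbieznosc} by interpolating the uniform bound \eqref{oszacowanie} against the convergence statements from \cite[Theorem 1.1]{BC25}, using the standard interpolation inequality for Sobolev spaces on the bounded interval $(0,\pi)$. Concretely, for the displacement component one knows from \cite{BC25} that $u_x(t,\cdot)\to 0$ in $L^2(0,\pi)$ while $\|u_x(t,\cdot)\|_{H^1(0,\pi)}\le C$ uniformly in $t$ by \eqref{oszacowanie}. Since $H^s(0,\pi)$ is, up to equivalence of norms, the real interpolation space $(L^2,H^1)_{s,2}$ for $s\in(0,1)$, there is a constant $C_s$ with
\begin{equation*}
\|u_x(t,\cdot)\|_{H^s(0,\pi)}\le C_s\,\|u_x(t,\cdot)\|_{L^2(0,\pi)}^{1-s}\,\|u_x(t,\cdot)\|_{H^1(0,\pi)}^{s}\le C_s\,C^s\,\|u_x(t,\cdot)\|_{L^2(0,\pi)}^{1-s},
\end{equation*}
and the right-hand side tends to $0$ as $t\to\infty$. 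The same argument applied to $u_t$ (with the pair $L^2$, $H^1$) and to $\theta-\theta_\infty$ (again with the pair $L^2$, $H^1$, noting that $\|\theta(t,\cdot)-\theta_\infty\|_{H^1}\le \|\theta(t,\cdot)\|_{H^1}+\|\theta_\infty\|_{H^1}\le C+C'$ is uniformly bounded, since $\theta_\infty$ is a constant function) gives convergence to $0$ in $H^s(0,\pi)$ for every $s\in[0,1)$. For $s=0$ the claim is exactly the $L^2$ convergence already recorded, so one only needs $s\in(0,1)$.

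The one point requiring a little care is that \cite[Theorem 1.1]{BC25}, as quoted in the excerpt, states $u(t,\cdot)\to 0$ in $H^1(0,\pi)$ and $u_t(t,\cdot)\to 0$ in $L^2(0,\pi)$, and \eqref{theta_in} gives $\theta(t)\to\theta_\infty$ in $L^2(0,\pi)$; from $u(t,\cdot)\to0$ in $H^1(0,\pi)$ we extract $u_x(t,\cdot)\to 0$ in $L^2(0,\pi)$, which is what feeds the interpolation for the first component. Thus the inputs are: $L^2$-convergence of each of $u_x$, $u_t$, $\theta-\theta_\infty$, together with the uniform $H^1$-bounds for the same three quantities coming from \eqref{oszacowanie}. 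I would write the interpolation inequality once in the generic form $\|f\|_{H^s}\le C_s\|f\|_{L^2}^{1-s}\|f\|_{H^1}^{s}$ and then invoke it three times.

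There is no substantial obstacle here; the statement is essentially a bookkeeping consequence of results already in the literature, and the proof is two or three lines. The only thing worth flagging is the choice of interpolation scale: one should make sure the $H^s(0,\pi)$ appearing in the proposition is the one compatible with the interpolation identity $(L^2(0,\pi),H^1(0,\pi))_{s,2}=H^s(0,\pi)$ with equivalent norms for $s\in(0,1)$, which is the standard fractional Sobolev space and is exactly the scale used later in the paper. With that convention fixed, the argument above is complete, and the proposition follows.
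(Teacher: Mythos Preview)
Your proposal is correct and is exactly the argument the paper uses: the paper simply states that the proposition follows by interpolating between the uniform $H^1$ bound \eqref{oszacowanie} and the $L^2$ convergence from \cite[Theorem~1.1]{BC25}, and you have written out those details carefully.
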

The above enhancement of the convergence result in \cite[Theorem 1.1]{BC25} is crucial for our estimates, as will be seen in our proof, where we require the convergence to hold in $H^s$, $s \in (\frac{3}{4},1)$.

Notice that as a straightforward consequence, we see that
\begin{equation}\label{zbtheta}
\int_0^\pi \theta(t,x)dx\rightarrow \pi\theta_\infty \mbox{ as } t \to \infty,
\end{equation}
where $\theta_\infty$ is defined in \eqref{theta_in}.

At the end of this section, let us comment on one of the consequences of our main Theorem \ref{main}.
\begin{rem}
The whole energy $E(t)$, defined in \eqref{energy}, is converted into heat at an exponential rate along the trajectories of \eqref{eq1}.
\end{rem}

\section{Fourier expansion}\label{Fourier}
In view of the boundary conditions in \eqref{eq1} we use the Fourier expansion
\begin{equation}\label{f1}
  u(t,x) = \sum\limits_{n=1}^\infty \hat{u}_n (t) \sin(nx), \qquad
	\theta (t,x) = \sum\limits_{n=0}^\infty \hat{\theta}_n (t) \cos(nx),
\end{equation}
for $(t,x) \in [0,\infty) \times [0,\pi]$. Inserting this into \eqref{eq1} and using the Cauchy product of series, we get for
$(t,x) \in (0,\infty) \times (0,\pi)$
\begin{align*}
  0 &= u_{tt} - u_{xx} - \mu \theta_x
	= \sum\limits_{n=1}^\infty \sin(nx) \left( \hat{u}_n^{\prime\prime} (t) + n^2 \hat{u}_n (t) + \mu n \hat{\theta}_n (t)\right), \\
	0 &= \theta_t - \theta_{xx} - \mu \theta u_{tx} \\
	&= \sum\limits_{n=0}^\infty \cos(nx) \left( \hat{\theta}_n^\prime (t) + n^2 \hat{\theta}_n (t) \right) \\
	& \hspace*{+5mm} -\mu \left( \sum\limits_{n=0}^\infty \hat{\theta}_n (t) \cos(nx) \right)
	\left( \sum\limits_{n=0}^\infty n \hat{u}_n^\prime (t) \cos(nx)\right) \\
	&= \sum\limits_{n=0}^\infty \cos(nx) \left( \hat{\theta}_n^\prime (t) + n^2 \hat{\theta}_n (t) \right) \\
	& \hspace*{+5mm} -\mu \sum\limits_{n=0}^\infty \sum\limits_{k=0}^n \hat{\theta}_{n-k}(t) \cos((n-k)x) k \hat{u}_k^\prime (t)
	\cos(kx) .
\end{align*}
In view of
$$\cos(y \pm z) = \cos(y)\cos(z) \mp \sin(y)\sin(z),$$
we have
$$\cos(y)\cos(z) = \frac{1}{2} \left( \cos(y+z) + \cos(y-z)\right), \quad y,z \in \R .$$
Inserting this into the previous identity, rearranging the terms, and using that $\cos$ is even, we obtain
\begin{align*}
  0 &= \sum\limits_{n=0}^\infty \cos(nx) \left( \hat{\theta}_n^\prime (t) + n^2 \hat{\theta}_n (t) \right) \\
  & \hspace*{+5mm} -\frac{\mu}{2} \sum\limits_{n=1}^\infty \sum\limits_{k=1}^n \hat{\theta}_{n-k}(t) k \hat{u}_k^\prime (t)
	\big( \cos(nx) + \cos((n-2k)x) \big) \\
  &= \sum\limits_{n=0}^\infty \cos(nx) \left( \hat{\theta}_n^\prime (t) + n^2 \hat{\theta}_n (t)
  -\frac{\mu}{2} \sum\limits_{k=1}^n \hat{\theta}_{n-k}(t) k \hat{u}_k^\prime (t)  \right) \\
	& \hspace*{+5mm} -\frac{\mu}{2} \bigg( \cos(0x) \sum\limits_{l=1}^\infty \hat{\theta}_l (t) l \hat{u}_l^\prime (t) \\
	& \hspace*{+5mm} + \sum\limits_{n=1}^\infty \cos(nx) \left[ \sum\limits_{l=1}^\infty \hat{\theta}_{l+n} (t)
	l \hat{u}_l^\prime (t) + \sum\limits_{l=0}^\infty \hat{\theta}_l (t) (l+n) \hat{u}_{l+n}^\prime (t) \right] \bigg) .	
\end{align*}
Altogether, as $\cos(nx)$, $n \in \N_0$, and $\sin(nx)$, $n \in \N$, are linearly independent, we obtain the system of ODEs
\begin{equation}\label{f2}
  \begin{cases}
	  \hat{u}_n^{\prime\prime} (t) =  -n^2 \hat{u}_n (t) - \mu n \hat{\theta}_n (t), \quad t \in (0,\infty), \; n \in \N, \\
		\hat{\theta}_0^\prime (t) = \frac{\mu}{2} \sum\limits_{l=1}^\infty \hat{\theta}_l (t) l \hat{u}_l^\prime (t),
		\quad t \in (0,\infty), \\
		\hat{\theta}_n^\prime (t) = - n^2 \hat{\theta}_n (t)
    +\frac{\mu}{2} \sum\limits_{k=1}^n \hat{\theta}_{n-k}(t) k \hat{u}_k^\prime (t)
		+ \frac{\mu}{2} \sum\limits_{l=1}^\infty \hat{\theta}_{l+n} (t) l \hat{u}_l^\prime (t) \\
		\hspace*{+15mm} + \frac{\mu}{2} \sum\limits_{l=0}^\infty \hat{\theta}_l (t) (l+n) \hat{u}_{l+n}^\prime (t),
		\quad t \in (0,\infty), \; n \in \N .
	\end{cases}
\end{equation}
In particular, we note that when time goes to $\infty$,
\begin{equation}\label{zbwszthe}
\pi \hat{\theta}_0(t) \rightarrow E(0), \quad
\hat{\theta}_n(t)\rightarrow 0, \; n \in \N.
\end{equation}
Indeed, this is a consequence of \eqref{zbtheta} and \eqref{theta_in}.

At the end of this section let us state an interesting remark.
\begin{rem}
The conservation of energy \eqref{energy} could be easily obtained from \eqref{f2}. Indeed, multiplying each equation for $\hat{u}_n$ by $\hat{u}_n^{\prime}$, next summing them from $n=1$ to infinity and using the equation on $\hat{\theta}_0^{\prime}$, we arrive at the energy conservation.
Here, one notices that the derivation of the energy conservation does not require using the evolution of $\hat{\theta}_n$ for $n\geq 1$.
\end{rem}

\section{Perturbation for linear system}\label{perturbation}
We write $\theta = a + \psi$ for some constant $a \in (0,\infty)$ and linearize \eqref{eq1} to obtain
\begin{equation}\label{linpde}
  \begin{cases}
	  u_{tt} - u_{xx} = \mu \psi_x, & \mbox{in } (0,\infty) \times (0,\pi), \\
		\psi_t - \psi_{xx} = \mu a u_{tx}, & \mbox{in } (0,\infty) \times (0,\pi).
	\end{cases}	
\end{equation}
Using Fourier expansion, in view of \eqref{f1} and $\theta = a + \psi$ we have
\begin{equation}\label{linf1}
  \psi (t,x) = \sum\limits_{n=0}^\infty \hat{\psi}_n (t) \cos(nx)
	= \hat{\psi}_0(t) + \sum\limits_{n=1}^\infty \hat{\theta}_n (t) \cos(nx), \; \hat{\theta}_0 = a + \hat{\psi}_0 .
\end{equation}
Inserting this and \eqref{f1} into \eqref{linpde}, we have
$$0 = \psi_t - \psi_{xx} - \mu a u_{tx}
  = \sum\limits_{n=0}^\infty \cos(nx) \left( \hat{\psi}_n^\prime (t) + n^2 \hat{\psi}_n (t) - a \mu n \hat{u}_n^\prime (t)
	\right) .
$$
As $\hat{\psi}_n = \hat{\theta}_n$ for $n \in \N$, for the first PDE of \eqref{linpde} we get again the first ODE in \eqref{f2}.
Hence, for this linearization we have the system of ODEs
\begin{equation}\label{linf2}
  \begin{cases}
	  \hat{u}_n^{\prime\prime} (t) =  -n^2 \hat{u}_n (t) - \mu n \hat{\theta}_n (t), \quad t \in (0,\infty), \; n \in \N, \\
		\hat{\theta}_n^\prime (t) = - n^2 \hat{\theta}_n (t) + a \mu n \hat{u}_n^\prime (t), \quad t \in (0,\infty), \; n \in \N .
	\end{cases}
\end{equation}
With
\begin{equation}\label{linf3}
  u_t (t,x) = \sum\limits_{n=1}^\infty \hat{u}_n^\prime (t) \sin(nx) = \sum\limits_{n=1}^\infty \hat{v}_n (t) \sin(nx)
\end{equation}
we get the first order linear system
\begin{equation}\label{linsys}
  \frac{d}{dt} \begin{pmatrix} n \hat{u}_n \\ \hat{v}_n \\ \hat{\theta}_n \end{pmatrix}
	= A_{n,a} \begin{pmatrix} n \hat{u}_n \\ \hat{v}_n \\ \hat{\theta}_n \end{pmatrix}, \quad t \in (0,\infty), \; n \in \N,
\end{equation}
with
\begin{equation}\label{linsysa}	
  A_{n,a} := \begin{pmatrix} 0 & n & 0 \\ -n & 0 & -\mu n \\ 0 & a \mu n & -n^2 \end{pmatrix} .
\end{equation}
With this notation, the nonlinear system \eqref{f2} corresponds to the first order system
\begin{equation}\label{nonlinsys}
  \frac{d}{dt} \begin{pmatrix} n \hat{u}_n \\ \hat{v}_n \\ \hat{\theta}_n \end{pmatrix}
	= A_{n,a} \begin{pmatrix} n \hat{u}_n \\ \hat{v}_n \\ \hat{\theta}_n \end{pmatrix} + g_n, \quad t \in (0,\infty), \; n \in \N,
\end{equation}
with
\begin{equation}\label{nonlinsysf}	
  \begin{split}
	 & g_n := \begin{pmatrix} 0 \\ 0 \\ g_{3,n} \end{pmatrix}, \qquad\mbox{where} \\
	 & g_{3,n} :=	\frac{\mu}{2} \sum\limits_{k=1}^{n-1}
	\hat{\theta}_{n-k} k \hat{v}_k + \frac{\mu}{2} \sum\limits_{l=1}^\infty \hat{\theta}_{l+n} l \hat{v}_l \\
	& \hspace*{+10mm} + \frac{\mu}{2} \sum\limits_{l=1}^\infty \hat{\theta}_l (l+n) \hat{v}_{l+n}
	+ \mu (\hat{\theta}_0 -a) n \hat{v}_n .
	\end{split}
\end{equation}
Next we study the linear system \eqref{linsys}.
\begin{lem}\label{lem3.1}
  Let $n \in \N$, $a>0$, and $A_{n,a}$ defined in \eqref{linsysa}. Then any eigenvalue $\lambda_n$ of $A_{n,a}$ satisfies
	$\RRe(\lambda_n) <0$.
\end{lem}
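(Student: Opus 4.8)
The plan is to reduce the lemma to a sign count on the coefficients of the characteristic polynomial of $A_{n,a}$. First I would compute $\det(A_{n,a}-\lambda I)$ by cofactor expansion along the first row; a short calculation gives
\begin{equation*}
  \det(A_{n,a}-\lambda I) = -\bigl(\lambda^3 + n^2\lambda^2 + n^2(1+a\mu^2)\lambda + n^4\bigr) =: -p_n(\lambda),
\end{equation*}
so the eigenvalues $\lambda_n$ of $A_{n,a}$ are exactly the three (complex) roots of the real cubic $p_n(\lambda)=\lambda^3+b_2\lambda^2+b_1\lambda+b_0$ with $b_2=n^2$, $b_1=n^2(1+a\mu^2)$, $b_0=n^4$.

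Next I would invoke the Routh--Hurwitz criterion for cubics: all roots of $\lambda^3+b_2\lambda^2+b_1\lambda+b_0$ lie in the open left half-plane $\{\RRe\lambda<0\}$ if and only if $b_2>0$, $b_0>0$ and $b_2 b_1 > b_0$. In our case $b_2=n^2>0$, $b_0=n^4>0$, and
\begin{equation*}
  b_2 b_1 - b_0 = n^2\cdot n^2(1+a\mu^2) - n^4 = a\mu^2 n^4 > 0,
\end{equation*}
since $n\geq 1$, $a>0$ and $\mu\neq 0$ (this is precisely where the coupling constant being nonzero is used). Hence every eigenvalue $\lambda_n$ of $A_{n,a}$ satisfies $\RRe(\lambda_n)<0$, which is the claim.

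If one wishes to avoid quoting Routh--Hurwitz, the same conclusion can be reached by a short elementary argument: evaluating $p_n(0)=n^4>0$ and $p_n(-n^2)=-a\mu^2 n^4<0$ exhibits a real root in $(-n^2,0)$, while substituting $\lambda=i\omega$ with $\omega\in\R$ and separating real and imaginary parts forces $\omega^2=n^2$ and simultaneously $\omega^2=n^2(1+a\mu^2)$ (or $\omega=0$ together with $n^4=0$), which is impossible; combining this with the relations $\lambda_1+\lambda_2+\lambda_3=-n^2$ and $\lambda_1\lambda_2\lambda_3=\det A_{n,a}=-n^4$ for the three eigenvalues then pins down the signs of the real parts. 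Either way, the only thing to watch is bookkeeping — the sign in the cofactor expansion and the standing hypothesis $\mu\neq 0$ (without it the wave block decouples and contributes the purely imaginary pair $\pm i n$, so the statement genuinely requires $a\mu^2>0$); there is no conceptual obstacle.
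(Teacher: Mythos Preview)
Your proof is correct. The characteristic polynomial computation matches the one appearing later in the paper (in the proof of Lemma~\ref{lem3.3}), and the Routh--Hurwitz verification $b_2 b_1 - b_0 = a\mu^2 n^4 > 0$ is clean. Your observation that the conclusion genuinely requires $\mu\neq 0$ is apt; the paper's own argument needs it at the same place (to deduce $\hat v_n=0$ from the third component of the eigenvector equation).

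Your route differs from the paper's, though the paper anticipates it: immediately after the proof there is a remark stating that Routh--Hurwitz gives an alternative argument. The paper instead introduces the weighted quadratic form $E_n(y)=|n\hat u_n|^2+|\hat v_n|^2+\tfrac{1}{a}|\hat\theta_n|^2$, checks $\dot E_n=-\tfrac{2n^2}{a}|\hat\theta_n|^2\le 0$ to obtain $\RRe\lambda_n\le 0$ via Lyapunov stability, and then rules out $\RRe\lambda_n=0$ by showing the corresponding eigenvector would be forced to vanish. Your approach is shorter and purely algebraic; the paper's approach costs more lines but has the advantage of exhibiting the dissipative mechanism explicitly (the same $E_n$ mirrors the physical energy of the system), which is conceptually in keeping with the rest of the article. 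One minor remark on your elementary alternative: in the all-real-roots case the trace and determinant relations alone do not quite ``pin down'' the signs without an extra observation (e.g., that the critical points of $p_n$ are both negative, so $p_n$ is increasing on $[0,\infty)$ and $p_n(0)>0$ excludes positive roots); this is easy to supply but worth making explicit.
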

\begin{proof}
  Allowing for later use also complex vectors, for $y:=(n \hat{u}_n, \hat{v}_n, \hat{\theta}_n) \in \C^3$, we define
 	$$E_n (y) := |n \hat{u}_n|^2 + |\hat{v}_n|^2 + \frac{1}{a} |\hat{\theta}_n|^2 .$$
	Restricting to the case $y \in \R^3$, we have $E_n \ge 0$ and $E_n (y)=0$ only for $y =0$ as well as
	\begin{align*}
	  \dot{E}_n (y) &= \nabla E_n(y) \cdot A_{n,a}y \\
	  &= 2n \hat{u}_n n \hat{v_n} + 2 \hat{v_n} (-n^2 \hat{u}_n -\mu n \hat{\theta}_n) + \frac{2}{a} \hat{\theta_n}(a\mu n \hat{v}_n
	  -n^2 \hat{\theta}_n) \\
		&= - \frac{2n^2}{a} \hat{\theta}_n^2 \le 0 .
	\end{align*}
	Hence, $E_n$ is a Liapunov function for \eqref{linsys} and thus $y=0$ is stable for \eqref{linsys} and any eigenvalue
	$\lambda_n$ of $A_{n,a}$ satisfies $\RRe(\lambda_n) \le 0$ by \cite[Theorems 2 and 3 in Section~2.9]{perko}. 	
	
	Assume for contradiction that there is an eigenvalue $\lambda_n$ of $A_{n,a}$ such that $\RRe(\lambda_n) =0$. Then there is a
	corresponding eigenvector $y \in \C^3$ and $z(t) := e^{\lambda_n t} y$, $t \ge 0$, is a solution to \eqref{linsys}. The
	definition of $E_n$ and $|e^{\lambda_n t}| = 1$ imply
	$$E_n(z(t)) = E_n(y) = E_n(\RRe(y)) + E_n(\IIm(y)), \qquad t \ge 0,$$
	and hence
	\begin{align*}
	  \dot{E}_n(y) &= \dot{E}_n (\RRe(y)) + \dot{E}_n(\IIm(y))
		= - \frac{2n^2}{a} \left( |\RRe(\hat{\theta}_n)|^2 + |\IIm(\hat{\theta}_n)|^2 \right) \\
		&= - \frac{2n^2}{a} |\hat{\theta}_n|^2.
	\end{align*}	
	As $E_n(z(t))$ is constant, we have
	$$0 = \frac{d}{dt} E_n(z(t)) \Big|_{t=0} = \dot{E}_n(y) = - \frac{2n^2}{a} |\hat{\theta}_n|^2,$$
	and hence $\hat{\theta}_n =0$. In view of $A_{n,a}y = \lambda_n y$, the third component now implies $\hat{v}_n =0$ and
	then we get $\hat{u}_n=0$ from the second component. But then $y=0$, a contradiction, as $y$ is an eigenvector. Hence,
	the claim is proved.
\end{proof}
\begin{rem}
An alternative proof could be obtained by applying the Routh-Hurwitz criterion.
\end{rem}

Our aim is to study projections of the solutions to \eqref{nonlinsys} with respect to appropriate directions and start by
studying the eigenvalues and eigenvectors of
\begin{equation}\label{linsysat}	
  A_{n,a}^\ast = \begin{pmatrix} 0 & -n & 0 \\ n & 0 & a\mu n \\ 0 & -\mu n & -n^2 \end{pmatrix}
\end{equation}
for large $n$. In what follows, we shall use the enhanced version of the Gershgorin theorem (see e.g. \cite[Theorem~4]{LZ19}), stating that if the Gershgorin disks (that are defined below) divide into the disjoint unions of disks, then the eigenvalues (calculated with their multiplicities) must lie in the separated unions of Gershgorin disks. In our case, the matrix is $3\times 3$, and as we shall see, one of the Gershgorin disks is separated from the union of other two, for large enough $n$. Hence, one of eigenvalues will lie in the single separated disk, while the other two will lie in the separated double.

The eigenvalues $\lambda_{j,n}$, $j=1,2,3$, of $A_{n,a}^\ast$, $n \in \N$, have the following properties.
\begin{lem}\label{lem3.3}
  Let $n \in \N$, $a>0$, and $A_{n,a}^\ast$ defined in \eqref{linsysat}. Then there are $n_0 \in \N$ and $c>0$ such that, for any
	$n \ge n_0$, $A_{n,a}^\ast$ has three simple eigenvalues $\lambda_{j,n}$, $j=1,2,3$, and we have
	$|\lambda_{1,n} - (-n^2 + a \mu^2)| \le \frac{c}{n^2}$, $|\lambda_{2,n} -(-ni - \frac{a \mu^2}{2})| \le \frac{c}{n}$, and
  $|\lambda_{3,n} -( ni - \frac{a \mu^2}{2})| \le \frac{c}{n}$.
\end{lem}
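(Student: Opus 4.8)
The plan is to locate the eigenvalues of $A_{n,a}^\ast$ by combining the characteristic polynomial with the Gershgorin disk theorem for large $n$. First I would compute the characteristic polynomial $p_n(\lambda) = \det(A_{n,a}^\ast - \lambda I)$. Expanding along the first column of \eqref{linsysat} gives $p_n(\lambda) = -\lambda\big[(-\lambda)(-n^2-\lambda) + a\mu^2 n^2\big] + n\big[n(-n^2-\lambda)\big]$, which after simplification is a cubic $-\lambda^3 - n^2\lambda^2 - (n^2 + a\mu^2 n^2)\lambda - n^4$; one should double-check the lower-order coefficient but the leading behaviour $\lambda^3 + n^2\lambda^2 + n^2(1+a\mu^2)\lambda + n^4 = 0$ is what matters. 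The heuristic is that there is one "parabolic" root near $-n^2$ and two "hyperbolic" roots near $\pm ni$, so I would make the ansatz $\lambda_{1,n} = -n^2 + \alpha + O(n^{-2})$ and $\lambda_{2,n}, \lambda_{3,n} = \pm ni + \beta + O(n^{-1})$ and substitute to identify $\alpha = a\mu^2$ and $\beta = -\tfrac{a\mu^2}{2}$ (consistent with the trace: $\lambda_{1,n} + \lambda_{2,n} + \lambda_{3,n} = -n^2$, so $\alpha + 2\beta = 0$).

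To make this rigorous rather than merely asymptotic, I would invoke the enhanced Gershgorin theorem as advertised in the text. Working in the basis in which \eqref{linsysat} is written, the three Gershgorin disks are $D_1$ centered at $0$ of radius $n$, $D_2$ centered at $0$ of radius $n + a\mu n$, and $D_3$ centered at $-n^2$ of radius $\mu n$. For large $n$ the third disk $D_3 = \{|z + n^2| \le \mu n\}$ is disjoint from $D_1 \cup D_2$ (whose points satisfy $|z| \le (1+a\mu)n$), so exactly one eigenvalue lies in $D_3$ and the other two lie in $D_1 \cup D_2$. This already gives $\lambda_{1,n} \in D_3$, hence $\RRe(\lambda_{1,n}) \le -n^2 + \mu n$, and in particular $\lambda_{1,n}$ is real once $n$ is large (complex eigenvalues of a real matrix come in conjugate pairs, and a conjugate pair cannot have one member in $D_3$ and the other outside). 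To sharpen the crude $O(n)$-size disk estimate to the claimed $O(n^{-2})$ and $O(n^{-1})$ bounds, I would use a Rouché/implicit-function argument on $p_n$: rescale, e.g. write $\lambda = -n^2 + w$ and show $p_n(-n^2 + w) = 0$ has a unique root with $|w - a\mu^2| \le c/n^2$ by estimating $p_n$ on the circle $|w - a\mu^2| = c/n^2$ against its derivative; similarly set $\lambda = ni + w$ and localize the root near $w = -a\mu^2/2$ within $c/n$. Simplicity of the three eigenvalues for $n \ge n_0$ follows because the three localizing regions are disjoint for large $n$.

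The main obstacle I anticipate is the bookkeeping in the Rouché/perturbation step that upgrades the Gershgorin localization from radius $O(n)$ to the precise orders $O(n^{-2})$ and $O(n^{-1})$: one must track how the cubic $p_n$ behaves on small circles around the predicted roots and verify that the "error" terms are genuinely smaller than the "main" term there, which requires being careful with the $n$-dependence of each monomial after rescaling. A clean alternative, which I would likely adopt, is to factor $p_n(\lambda) = (\lambda - \lambda_{1,n})(\lambda^2 + b_n\lambda + c_n)$ using the real root $\lambda_{1,n}$ already isolated in $D_3$; then $b_n = n^2 + \lambda_{1,n}$ and $c_n = -n^4/\lambda_{1,n}$ are controlled once $\lambda_{1,n} = -n^2 + a\mu^2 + O(n^{-2})$ is pinned down (which itself comes from plugging the ansatz into $p_n(\lambda_{1,n}) = 0$ and solving the resulting scalar equation by contraction), and the quadratic roots $\lambda_{2,n},\lambda_{3,n} = \tfrac{1}{2}(-b_n \pm \sqrt{b_n^2 - 4c_n})$ can then be expanded directly, giving $\pm ni - \tfrac{a\mu^2}{2} + O(n^{-1})$. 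Everything else — disjointness of disks, reality of $\lambda_{1,n}$, simplicity — is then elementary.
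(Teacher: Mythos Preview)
Your proposal is correct and follows the same opening moves as the paper: compute the characteristic polynomial $p_n(\lambda) = -(\lambda^3 + n^2\lambda^2 + n^2(1+a\mu^2)\lambda + n^4)$, then use the enhanced Gershgorin theorem to isolate one eigenvalue in the disk around $-n^2$ and two in the disk around $0$, and conclude from the conjugate-pair structure of a real matrix that $\lambda_{1,n}$ is real and the other two are complex conjugates for large $n$.

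The difference is in how the asymptotics are sharpened from the Gershgorin $O(n)$ localization down to $O(n^{-2})$ and $O(n^{-1})$. You propose either a Rouch\'e/contraction argument on $p_n$ near the predicted roots, or the Vieta factoring $p_n(\lambda) = -(\lambda-\lambda_{1,n})(\lambda^2 + b_n\lambda + c_n)$ followed by the quadratic formula. The paper does something more bare-handed: having written $\lambda_{2,n} = -x_n - iy_n$, $\lambda_{3,n} = -x_n + iy_n$, it uses the trace identity $\lambda_{1,n} + \lambda_{2,n} + \lambda_{3,n} = -n^2$ to get $\lambda_{1,n} = -n^2 + 2x_n$, substitutes this directly into $p_n(\lambda_{1,n}) = 0$ to extract $x_n = \tfrac{a\mu^2}{2} + O(n^{-2})$, and then substitutes $\lambda_{3,n} = -x_n + iy_n$ into $p_n(\lambda_{3,n}) = 0$ and reads off $y_n = n + O(n^{-1})$ from the imaginary part. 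This avoids both Rouch\'e and the square-root expansion in the quadratic formula, at the cost of slightly more explicit algebra. Your factoring route is equally clean and gives the same estimates with the same effort; the Rouch\'e route would work too but is heavier than necessary here.
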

\begin{proof}
  For $n \in \N$ let $A_{n,a}^\ast = (a_{kj})$ and
	$$D_{k,n} := \Big\{z \in \C \: : \: |z-a_{kk}| \le \sum\limits_{j \neq k} |a_{kj}| \Big\}, \quad k = 1,2,3,$$
	the Gershgorin disks. Then we have $D_{3,n} = \overline{B(-n^2, \mu n)}$, $D_{1,n} =\overline{B(0,n)}$, and
	$D_{2,n} = \overline{B(0,n(1+a\mu))}$. Hence,
	there is $N_1 \in \N$ such that $D_{3,n} \cap (D_{1,n} \cup D_{2,n}) = \emptyset$ for all $n \ge N_1$. Hence, by Gershgorin's
	theorem for any $n \ge N_1$ exactly one eigenvalue $\lambda_{1,n}$ of $A_{n,a}^\ast$ is
	contained in $D_{3,n}$, while the other two
	eigenvalues $\lambda_{2,n}$ and $\lambda_{3,n}$ (complex conjugated if they are not in $\R$, as the matrix is of real
	entries) are contained in $D_{1,n} \cup D_{2,n}$.
	
	Assume for contradiction that there is $n \ge N_1$ such that $\lambda_{2,n} \in \R$. As $\lambda_{2,n} \in D_{1,n} \cup
	D_{2,n}$, we have $|\lambda_{2,n}| \le n(1+a\mu)$. The
	characteristic polynomial of $A_{n,a}^\ast$ is
	\begin{align*}
	  p_n(\lambda) &= \det(A_{n,a}^\ast - \lambda I)
		= \det \begin{pmatrix} -\lambda & -n & 0 \\ n & -\lambda & a\mu n \\ 0 & -\mu n & -n^2-\lambda \end{pmatrix} \\
		&= -\lambda (-\lambda(-n^2-\lambda) + a\mu^2n^2) -n (-n(-n^2-\lambda)-0) \\
		&= -n^2 \lambda^2 - \lambda^3 -a\mu^2n^2 \lambda -n^4 -n^2 \lambda \\
		&= -(\lambda^3 + n^2 \lambda^2 + n^2(a\mu^2+1)\lambda + n^4).
	\end{align*}
	In view of $\lambda_{2,n} \in \R$ and $|\lambda_{2,n}| \le n(1+a\mu)$, there is $N_2 \ge N_1$ such that for any $n \ge N_2$
	we have
	$$p_n(\lambda_{2,n}) \le -n^4 + n^3\left[(1+a\mu)^3 +(a\mu^2+1)(1+a\mu) \right] \le -\frac{n^4}{2} <0.$$
	Hence, for $n \ge N_2$ this is a contradiction and in that case we have $\lambda_{2,n} \in \C \setminus \R$. In particular,
	since $A_{n,a}^\ast$ is a real matrix, we have $\lambda_{3,n} = \overline{\lambda_{2,n}} \neq \lambda_{2,n}$ and $A_{n,a}^\ast$
	has three simple eigenvalues in that case.
	
	Hence, for $n \ge N_2$ we have $\lambda_{3,n} = -x_n+iy_n$ and $\lambda_{2,n} = -x_n-iy_n$ with $x_n,y_n \in \R$. As the above
	result implies $y_n \neq 0$, we choose w.l.o.g. $y_n>0$. As $A_{n,a}^\ast = (A_{n,a})^T$ has the same eigenvalues as $A_{n,a}$,
	Lemma~\ref{lem3.1} implies $x_n>0$. In view of $\lambda_{3,n} \in D_{1,n} \cup D_{2,n}$, we further have
	$|x_n|, |y_n| \le n(1+a\mu)$.
	
	Next, we get
	$$-n^2 = \mathrm{trace}(A_{n,a}^\ast) = \lambda_{1,n} + \lambda_{2,n} + \lambda_{3,n} = \lambda_{1,n} - 2x_n,$$
	and therefore
	\begin{equation}\label{eq3.1.1}
	  \lambda_{1,n} = -n^2 + 2x_n .
	\end{equation}	
	As $p_n(\lambda_{1,n})=0$, we have
	\begin{align*}
	  0 &= (-n^2 + 2x_n)^3 + n^2 (-n^2 + 2x_n)^2 + n^2(a\mu^2+1)(-n^2 + 2x_n) + n^4 \\
		&= -n^6 + 3n^4 \cdot 2x_n - 3n^2 \cdot 4x_n^2 + 8x_n^3 +n^6 -4x_n n^4 + 4x_n^2 n^2 \\
		&\hspace*{+5mm} - n^4(a\mu^2 +1) + 2x_n n^2(a\mu^2 +1) + n^4 \\
		&= -a\mu^2 n^4 + 2x_n n^4 + 2x_n n^2(a\mu^2 +1) - 8x_n^2 n^2 + 8x_n^3 .
	\end{align*}
	If $(x_n)_{n \ge N_2}$ is not bounded, then the term $2x_n n^4$ cannot be compensated as $n \to \infty$, since all other terms
	are in $O(n^4)$ in view of $|x_n| \le n(1+a\mu)$. Hence, $(x_n)_{n \ge N_2}$ must be bounded and we get
	$$0= -a\mu^2 n^4 + 2x_n n^4 + O(n^2).$$
	This yields $x_n = \frac{a\mu^2}{2} + b_n$ and hence $0= 2b_n n^4 + O(n^2)$. In view of \eqref{eq3.1.1}, we conclude that there
	is $N_3 \ge N_2$ such that
	\begin{equation}\label{eq3.1.2}
	  x_n = \frac{a\mu^2}{2} + O \left(\frac{1}{n^2} \right) \quad\mbox{and}\quad
		\lambda_{1,n} = -n^2 + a\mu^2 + O \left(\frac{1}{n^2} \right), \quad n \ge N_3 .
	\end{equation}
	Next, as $p_n(\lambda_{3,n})=0$, we have
	\begin{align*}
	  0 &= (-x_n+iy_n)^3 + n^2 (-x_n+iy_n)^2 + n^2(a\mu^2+1)(-x_n+iy_n) + n^4 \\
		&= -x_n^3 + 3x_n^2 y_n i +3x_n y_n^2 -i y_n^3 +n^2 (x_n^2 -y_n^2) - 2n^2 x_n y_n i \\
		&\hspace*{+5mm} -n^2 (a\mu^2+1) x_n +iy_n n^2(a\mu^2 +1) +n^4 \\
		&= iy_n \left(3x_n^2 - y_n^2 -2n^2 x_n +n^2(a\mu^2 +1) \right) -x_n^3 + 3x_ny_n^2 \\
		&\hspace*{+5mm} + n^2 (x_n^2-y_n^2) -n^2 (a\mu^2 +1)x_n + n^4 .
	\end{align*}	
	In view of $x_n, y_n \in \R$, $y_n \neq 0$, and \eqref{eq3.1.2}, the imaginary part implies
	\begin{align*}
	  0&= 3x_n^2 - y_n^2 -2n^2 x_n +n^2(a\mu^2 +1) \\
	  &= 3x_n^2 - y_n^2 -a\mu^2 n^2 -2n^2 b_n + n^2(a\mu^2 +1)
	  = n^2 -y_n^2 +O(1).
	\end{align*}	
	Since $y_n >0$, we get $y_n = n + d_n$ with $d_n \in o(n)$ and hence
	$$0= n^2 -(n+d_n)^2 + O(1) = -2n d_n - d_n^2 + O(1).$$
	This yields $d_n \in O(\frac{1}{n})$ and finally $y_n = n + O(\frac{1}{n})$ for $n \ge n_0$ with some $n_0 \ge N_3$.
	In conjunction with $\lambda_{3,n} = -x_n+iy_n$, $\lambda_{2,n} = -x_n-iy_n$, and \eqref{eq3.1.2}, the claim is proved.
\end{proof}
Next, we shall see how do the eigenvectors, corresponding to the eigenvalues of $A_{n,a}^\ast$, look like asymptotically.
\begin{lem}\label{lem3.2}
  Let $n_0 \in \N$ be from Lemma~\ref{lem3.3}, $n \in \N$ with $n \ge n_0$, $a>0$, and $A_{n,a}^\ast$ defined in
	\eqref{linsysat}. Then we have $A_{n,a}^\ast V_{j,n} = \lambda_{j,n}V_{j,n}$
	for $j = 1,2,3$, where
	\begin{align*}
	  & \lambda_{1,n} = -n^2 + a \mu^2 + O \left(\frac{1}{n^2} \right), \qquad
		V_{1,n} = \begin{pmatrix} -\frac{a \mu}{n^2} + O \left(\frac{1}{n^3} \right) \\[1mm] -\frac{a\mu}{n} + O \left(\frac{1}{n^3}
		\right) \\[1mm] 1- \frac{a\mu^2}{n^2} + O \left(\frac{1}{n^3} \right) \end{pmatrix}, \\
		& \lambda_{2,n} = -ni - \frac{a \mu^2}{2} + O \left(\frac{1}{n} \right), \qquad
		V_{2,n} = \begin{pmatrix} 1 \\[1mm] i + \frac{a\mu^2}{2n} + O \left(\frac{1}{n^2}
		\right) \\[1mm] -\frac{\mu i}{n} + \frac{\mu}{n^2} - \frac{a\mu^3}{2n^2} + O \left(\frac{1}{n^3} \right) \end{pmatrix}, \\
		& \lambda_{3,n} = ni - \frac{a \mu^2}{2} + O \left(\frac{1}{n} \right), \qquad
		V_{3,n} = \begin{pmatrix} 1 \\[1mm] -i + \frac{a\mu^2}{2n} + O \left(\frac{1}{n^2}
		\right) \\[1mm] \frac{\mu i}{n} + \frac{\mu}{n^2} - \frac{a\mu^3}{2n^2} + O \left(\frac{1}{n^3} \right) \end{pmatrix}.
	\end{align*}
\end{lem}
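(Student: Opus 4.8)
The plan is to solve the eigenvector equation $(A_{n,a}^\ast-\lambda_{j,n}I)V_{j,n}=0$ componentwise for each $j$, feeding in the eigenvalue asymptotics of Lemma~\ref{lem3.3} and fixing in each case the scalar ambiguity by normalizing the dominant component. Since $\det(A_{n,a}^\ast-\lambda_{j,n}I)=0$, the three scalar equations are dependent; from the first and third rows of $A_{n,a}^\ast-\lambda I$ one gets
\[
  v_2=-\frac{\lambda}{n}\,v_1,\qquad v_2=-\frac{n^2+\lambda}{\mu n}\,v_3,
\]
the second row being then automatically satisfied, so each $V_{j,n}$ is determined once one component is prescribed.

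For $j=2,3$ the first component is the dominant one, so I normalize $v_1=1$. The first relation gives $v_2=-\lambda_{j,n}/n$, and substituting $\lambda_{2,n}=-ni-\tfrac{a\mu^2}{2}+O(1/n)$, resp.\ $\lambda_{3,n}=ni-\tfrac{a\mu^2}{2}+O(1/n)$, from Lemma~\ref{lem3.3} produces $v_2=i+\tfrac{a\mu^2}{2n}+O(1/n^2)$, resp.\ $-i+\tfrac{a\mu^2}{2n}+O(1/n^2)$. For $v_3$ one inverts the second relation, $v_3=-\mu n\,v_2/(n^2+\lambda_{j,n})$, expands $\tfrac{1}{n^2+\lambda_{j,n}}=\tfrac{1}{n^2}\,(1+\lambda_{j,n}/n^2)^{-1}$ as a geometric series in $\lambda_{j,n}/n^2=O(1/n)$, and collects terms up to order $O(1/n^3)$; this yields the stated third component. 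Because $A_{n,a}^\ast$ is real and $\lambda_{3,n}=\overline{\lambda_{2,n}}$, one also has $V_{3,n}=\overline{V_{2,n}}$, so the case $j=3$ follows from $j=2$.

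For $j=1$ one has $n^2+\lambda_{1,n}=a\mu^2+O(1/n^2)$, hence $v_3/v_2=-\mu n/(n^2+\lambda_{1,n})$ is of order $n$, so the temperature component is dominant and gets normalized. (The precise choice of normalization — made for the projection argument of the later sections — affects only the $O(1/n^2)$ correction in the third component.) Then $v_2=-\tfrac{n^2+\lambda_{1,n}}{\mu n}\,v_3=-\tfrac{a\mu}{n}+O(1/n^3)$, and from the first relation together with $1/\lambda_{1,n}=-\tfrac{1}{n^2}\,(1+O(1/n^2))$ one gets $v_1=-\tfrac{n}{\lambda_{1,n}}\,v_2=-\tfrac{a\mu}{n^2}+O(1/n^3)$. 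The $O(1/n^2)$ term of $v_3$ is then computed from the same relations applied to $A_{n,a}$ and the relevant pairing, for which the leading-order part of Lemma~\ref{lem3.3} already suffices.

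I expect the one genuine difficulty to be the asymptotic bookkeeping: one must carry the geometric expansions of $1/(n^2+\lambda_{j,n})$ and $1/\lambda_{1,n}$ to enough terms, and — more importantly — check that the $O(\cdot)$ remainders propagate through the products and quotients with the claimed orders. This is exactly why the accuracy $O(1/n^2)$ for $\lambda_{1,n}$ and $O(1/n)$ for $\lambda_{2,n},\lambda_{3,n}$ recorded in Lemma~\ref{lem3.3} is precisely what one needs; no new conceptual ingredient is required beyond that lemma.
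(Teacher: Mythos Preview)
Your approach is essentially the same as the paper's --- solve the eigenvector equation componentwise and expand using the eigenvalue asymptotics from Lemma~\ref{lem3.3} --- though you organize it more cleanly by isolating the row-1 and row-3 relations $v_2=-\lambda v_1/n$ and $v_2=-(n^2+\lambda)v_3/(\mu n)$ at the outset, whereas the paper plugs an ansatz $(\eps_1,\eps_2,1+\eps_3)^T$ (resp.\ $(1,i+\eps_2,\eps_3)^T$) into all three rows and solves order by order. Both routes give the same expansions, and your treatment of $j=2,3$ is correct as written.

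One point needs fixing in your $j=1$ discussion. You correctly observe that the third component of $V_{1,n}$ is a normalization choice (any $v_3=1+O(1/n^2)$ yields the stated asymptotics for $v_1,v_2$, since rescaling by $1-a\mu^2/n^2$ perturbs $-a\mu/n$ and $-a\mu/n^2$ only at orders $O(1/n^3)$ and $O(1/n^4)$). But your final sentence --- ``The $O(1/n^2)$ term of $v_3$ is then computed from the same relations applied to $A_{n,a}$ and the relevant pairing'' --- is unclear and unnecessary: there is nothing to compute, and $A_{n,a}$ (as opposed to $A_{n,a}^\ast$) plays no role. The paper simply \emph{chooses} the constant $d_2=-a\mu^2$ in the expansion $\eps_3=d_1/n+d_2/n^2+\cdots$ (together with $b_1=d_1=0$) so that the second-row equation is satisfied to order $O(1/n)$; you should either mirror that choice or simply state that $v_3=1-a\mu^2/n^2$ is the normalization adopted for later use, and drop the reference to a pairing.
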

\begin{proof}
  For $j=1$ we know by Lemma \ref{lem3.3} that $\lambda_{1,n} =-n^2 + a \mu^2 + O \left(\frac{1}{n^2}\right)$. Hence,
	we use the ansatz $\lambda_{1,n} = -n^2 + \lambda$ with $\lambda = a \mu^2 + O \left(\frac{1}{n^2}\right)$ and
	$V_{1,n} = (\eps_1, \eps_2, 1+ \eps_3)^T$ and aim to solve the
	equation
	$$(-n^2+\lambda) \begin{pmatrix} \eps_1 \\ \eps_2 \\ 1+ \eps_3 \end{pmatrix}
	= A_{n,a}^\ast \begin{pmatrix} \eps_1 \\ \eps_2 \\ 1+ \eps_3 \end{pmatrix}
	= \begin{pmatrix} -n \eps_2 \\ n \eps_1 + a \mu n (1+ \eps_3) \\ -\mu n \eps_2 -n^2 -n^2 \eps_3 \end{pmatrix}.$$
	The third component and the value of $\lambda$ yield $-\mu n \eps_2= \lambda (1+ \eps_3) = a\mu^2 + a\mu^2 \eps_3
	+ O(\frac{1}{n^2})$.
	Assuming $\eps_3 \in O(\frac{1}{n})$ we get $\eps_2 = -\frac{a\mu}{n}
	+ O(\frac{1}{n^2})$. Then the first component yields $(n - \frac{\lambda}{n}) \eps_1 = \eps_2 = -\frac{a\mu}{n}
	+ O(\frac{1}{n^2})$
	and hence $\eps_1 = -\frac{a\mu}{n^2} + O(\frac{1}{n^3})$. Writing $\eps_2 = -\frac{a\mu}{n} + \frac{b_1}{n^2} + \frac{b_2}{n^3}
	+ O(\frac{1}{n^4})$
	and $\eps_3 = \frac{d_1}{n} + \frac{d_2}{n^2} + O(\frac{1}{n^3})$,
	we get from the second component
	$a\mu n -b_1 - \frac{b_2}{n} - \frac{a^2\mu^3}{n} + O(\frac{1}{n^2}) = -\frac{a\mu}{n} + a \mu n + a \mu d_1 + \frac{a\mu d_2}{n}
	+ O(\frac{1}{n^2})$. We choose e.g. $b_1 = d_1 =0$, $b_2 = a \mu$, and $d_2 = -a\mu^2$. Then $V_{1,n}$ has the claimed form and
	the equation $A_{n,a}^\ast V_{1,n} = \lambda_{1,n}V_{1,n}$,
	which is equivalent to
	$$\begin{pmatrix} a\mu + O(\frac{1}{n}) \\[1mm] a \mu n + O(\frac{1}{n}) \\[1mm] - n^2 + 2 a \mu^2 + O(\frac{1}{n}) \end{pmatrix}
	= \begin{pmatrix} a\mu + O(\frac{1}{n}) \\[1mm] a \mu n + O(\frac{1}{n}) \\[1mm] - n^2 + 2 a \mu^2 + O(\frac{1}{n})
	\end{pmatrix},$$
	is satisfied with an error of $O(\frac{1}{n})$.
	
	For $j=2$ we know from Lemma \ref{lem3.3} that $\lambda_{2,n} = -ni -\frac{a\mu^2}{2}
	+ O \left(\frac{1}{n}\right)$. Hence, we use the ansatz $\lambda_{2,n} = -ni + \lambda$ with $\lambda = -\frac{a\mu^2}{2}
	+ O \left(\frac{1}{n}\right)$ and $V_{2,n} = (1+\eps_1, i+\eps_2, \eps_3)^T$ and aim to solve the
	equation
	$$(-ni +\lambda) \begin{pmatrix} 1+\eps_1 \\ i+\eps_2 \\ \eps_3 \end{pmatrix}
	= A_{n,a}^\ast \begin{pmatrix} 1+\eps_1 \\ i+\eps_2 \\ \eps_3 \end{pmatrix}
	= \begin{pmatrix} -ni -n\eps_2 \\ n+ n \eps_1 + a \mu n \eps_3 \\ -\mu n i -\mu n \eps_2 -n^2 \eps_3 \end{pmatrix}.$$
	We choose e.g. $\eps_1 =0$. Then the three components yield $\eps_2 = -\frac{\lambda}{n}$, $a\mu n \eps_3 = 2\lambda i
	- \frac{\lambda^2}{n}$, and $(1 - \frac{i}{n} + \frac{\lambda}{n^2}) \eps_3 = -\frac{\mu i}{n} + \frac{\mu \lambda}{n^2}$.
	Inserting the value of $\lambda$, we get $\eps_3 = \frac{2\lambda i}{a\mu n} + O(\frac{1}{n^2}) = -\frac{\mu i}{n}
	+ O(\frac{1}{n^2})$. This implies
	$\eps_3 = -\frac{\mu i}{n} + \frac{\mu \lambda}{n^2} +(\frac{i}{n} - \frac{\lambda}{n^2}) \eps_3 = -\frac{\mu i}{n}
	- \frac{a\mu^3}{2n^2} + \frac{\mu}{n^2} + O(\frac{1}{n^3})$. Then $V_{2,n}$ has the claimed form and
	$A_{n,a}^\ast V_{2,n} = \lambda_{2,n}V_{2,n}$,
	which is equivalent to
	$$\begin{pmatrix} -n i - \frac{a\mu^2}{2} + O(\frac{1}{n}) \\[1mm] n - a \mu^2 i + O(\frac{1}{n}) \\[1mm]
	- \mu + O(\frac{1}{n}) \end{pmatrix}
	= \begin{pmatrix} -n i - \frac{a\mu^2}{2} + O(\frac{1}{n}) \\[1mm] n - a \mu^2 i + O(\frac{1}{n}) \\[1mm]
	- \mu + O(\frac{1}{n})
	\end{pmatrix},$$
	is satisfied with an error of $O(\frac{1}{n})$.
	
  As $A_{n,a}^\ast$ is real, we get $\lambda_{3,n} = \overline{\lambda_{2,n}}$ and $V_{3,n} = \overline{V_{2,n}}$ by complex
	conjugation.
\end{proof}
Taking only the explicitly given terms in $\lambda_{j,n}$ and $V_{j,n}$, we define
\begin{equation}\label{cd}
  \begin{split}
  & C_n := \begin{pmatrix} -\frac{a \mu}{n^2} & 1 & 1\\[1mm] -\frac{a\mu}{n} & i + \frac{a\mu^2}{2n} & -i + \frac{a\mu^2}{2n}
	\\[1mm]
	1- \frac{a\mu^2}{n^2} & -\frac{\mu i}{n} + \frac{\mu}{n^2} - \frac{a\mu^3}{2n^2} & \frac{\mu i}{n} + \frac{\mu}{n^2}
	- \frac{a\mu^3}{2n^2} \end{pmatrix}, \\
	&D_n := {\rm diag}\left(-n^2 + a\mu^2, -ni - \frac{a \mu^2}{2}, ni - \frac{a\mu^2}{2}\right).
	\end{split}
\end{equation}
We get
\begin{equation}\label{cinv}
  C_n^{-1} = \begin{pmatrix} -\frac{\mu}{n^2} +O(\frac{1}{n^4}) & \frac{\mu}{n} +O(\frac{1}{n^3}) & 1 + \frac{2a\mu^2}{n^2}
  +O(\frac{1}{n^4}) \\[1mm] \frac{1}{2} + \frac{a\mu^2 i}{4n} +O(\frac{1}{n^3}) & -\frac{i}{2} +O(\frac{1}{n^2})& -\frac{a\mu i}{2n}
  +O(\frac{1}{n^2}) \\[1mm] \frac{1}{2} - \frac{a\mu^2 i}{4n} +O(\frac{1}{n^3}) & \frac{i}{2} +O(\frac{1}{n^2}) & \frac{a\mu i}{2n}  +O(\frac{1}{n^2}) \end{pmatrix}
\end{equation}
and
$$C_n D_n = \begin{pmatrix} a\mu + O(\frac{1}{n}) & -n i - \frac{a\mu^2}{2} + O(\frac{1}{n}) & n i - \frac{a\mu^2}{2}
+ O(\frac{1}{n}) \\[1mm] a\mu n + O(\frac{1}{n}) & n - a \mu^2 i + O(\frac{1}{n}) & n + a \mu^2 i + O(\frac{1}{n}) \\[1mm]
-n^2 + 2 a \mu^2 + O(\frac{1}{n}) & - \mu + O(\frac{1}{n}) & - \mu + O(\frac{1}{n}) \end{pmatrix}$$
as well as
$$C_n D_n C_n^{-1} = \begin{pmatrix} O(\frac{1}{n}) & -n + O(\frac{1}{n}) & O(\frac{1}{n}) \\[1mm]
n + O(\frac{1}{n}) & O(\frac{1}{n}) & a \mu n + O(\frac{1}{n}) \\[1mm]
O(\frac{1}{n}) & - \mu n + O(\frac{1}{n}) & - n^2 + O(\frac{1}{n}) \end{pmatrix}.$$
Hence, we have
$$C_n D_n C_n^{-1} = A_{n,a}^\ast$$
up to a componentwise error of $O(\frac{1}{n})$ and therefore there is $c>0$ such that
\begin{equation}\label{pert1}
\| A_{n,a}^\ast - C_n D_n C_n^{-1}\| \le \frac{c}{n}, \qquad n \in \N.
\end{equation}

Next, we study the projections
\begin{equation}\label{proj}
  \begin{split}
  U_{1,n}(t) &:= \begin{pmatrix} -\frac{a \mu}{n^2} \\[1mm] -\frac{a\mu}{n} \\[1mm] 1- \frac{a\mu^2}{n^2} \end{pmatrix} \cdot
	\begin{pmatrix} n \hat{u}_n \\ \hat{v}_n \\ \hat{\theta}_n \end{pmatrix}(t), \\
	U_{2,n}(t) &:= \begin{pmatrix} 1 \\[1mm] i + \frac{a\mu^2}{2n}	\\[1mm] -\frac{\mu i}{n} + \frac{\mu}{n^2} - \frac{a\mu^3}{2n^2}
	\end{pmatrix} \cdot
	\begin{pmatrix} n \hat{u}_n \\ \hat{v}_n \\ \hat{\theta}_n \end{pmatrix}(t), \\
	U_{3,n}(t) &:= \begin{pmatrix} 1\\[1mm] -i + \frac{a\mu^2}{2n} \\[1mm] \frac{\mu i}{n} + \frac{\mu}{n^2} - \frac{a\mu^3}{2n^2}
	\end{pmatrix} \cdot
	\begin{pmatrix} n \hat{u}_n \\ \hat{v}_n \\ \hat{\theta}_n \end{pmatrix}(t),
	\end{split}
\end{equation}
of the solution to \eqref{nonlinsys} in direction of the eigenvectors $V_{j,n}$ defined above. In view of \eqref{nonlinsys} and
\eqref{nonlinsysf}, we get
\begin{align*}
  U_{1,n}^\prime &= -\frac{a \mu}{n} \, \hat{v}_n + a \mu n \hat{u}_n + a\mu^2 \hat{\theta}_n + \left(1- \frac{a\mu^2}{n^2}
	\right) a \mu n \hat{v}_n \\
	&\hspace*{+5mm} + (-n^2+a\mu^2) \hat{\theta}_n + \left(1- \frac{a\mu^2}{n^2} \right) g_{3,n} \\
	&= (-n^2+a\mu^2) U_{1,n} + \frac{a^2\mu^3}{n^2} \, (n \hat{u}_n) - \frac{a\mu}{n} \, \hat{v}_n + \frac{a^2\mu^4}{n^2} \,
	\hat{\theta}_n \\
	& \hspace*{+5mm} + \left(1- \frac{a\mu^2}{n^2} \right) g_{3,n}, \\
	U_{2,n}^\prime &= n \hat{v}_n + \left(-ni-\frac{a\mu^2}{2} \right) n \hat{u}_n + \left(-\mu n i - \frac{a\mu^3}{2} \right)
	\hat{\theta}_n \\
	&\hspace*{+5mm} + \left(-a\mu^2 i + \frac{a\mu^2}{n} - \frac{a^2\mu^4}{2n} \right) \hat{v}_n + \left(\mu n i - \mu
	+\frac{a\mu^3}{2} \right) \hat{\theta}_n \\
	&\hspace*{+5mm} + \left(- \frac{\mu i}{n} + \frac{\mu}{n^2} - \frac{a\mu^3}{2n^2}\right) g_{3,n} \\
	&= \left(-ni -\frac{a\mu^2}{2} \right) U_{2,n} + \left(\frac{a\mu^2}{n} - \frac{a^2\mu^4}{4n}\right) \hat{v}_n
	+\Big( \frac{\mu i}{n} - \frac{a\mu^3 i}{n} \\
	&\hspace*{+5mm} + \frac{a\mu^3}{2n^2} - \frac{a^2\mu^5}{4n^2} \Big) \hat{\theta}_n
	+ \left(- \frac{\mu i}{n} + \frac{\mu}{n^2} - \frac{a\mu^3}{2n^2}\right) g_{3,n}, \\
	U_{3,n}^\prime &= n \hat{v}_n + \left(ni-\frac{a\mu^2}{2} \right) n \hat{u}_n + \left(\mu n i - \frac{a\mu^3}{2} \right)
	\hat{\theta}_n \\
	&\hspace*{+5mm} + \left(a\mu^2 i + \frac{a\mu^2}{n} - \frac{a^2\mu^4}{2n} \right) \hat{v}_n + \left(-\mu n i - \mu
	+\frac{a\mu^3}{2} \right) \hat{\theta}_n \\
	&\hspace*{+5mm} + \left(\frac{\mu i}{n} + \frac{\mu}{n^2} - \frac{a\mu^3}{2n^2}\right) g_{3,n} \\
	&= \left(ni -\frac{a\mu^2}{2} \right) U_{3,n} + \left(\frac{a\mu^2}{n} - \frac{a^2\mu^4}{4n}\right) \hat{v}_n
	+\Big( -\frac{\mu i}{n} + \frac{a\mu^3 i}{n} \\
	&\hspace*{+5mm} + \frac{a\mu^3}{2n^2} - \frac{a^2\mu^5}{4n^2} \Big) \hat{\theta}_n
	+ \left(\frac{\mu i}{n} + \frac{\mu}{n^2} - \frac{a\mu^3}{2n^2}\right) g_{3,n} .
\end{align*}
Hence, we have for all $n \in \N$
\begin{equation}\label{sysproj}
  \begin{split}
	  U_{1,n}^\prime &= ({-}n^2+a\mu^2) U_{1,n} + F_{1,n}, \quad t \in (0,\infty), \\
		U_{2,n}^\prime &= \left({-}ni -\frac{a\mu^2}{2} \right) U_{2,n} + F_{2,n}, \quad t \in (0,\infty), \\
		U_{3,n}^\prime &= \left(ni -\frac{a\mu^2}{2} \right) U_{3,n} + F_{3,n}, \quad t \in (0,\infty),
	\end{split}
\end{equation}
with
\begin{align}\label{sysprojf}
    F_{1,n} &:= \frac{a^2\mu^3}{n^2} \, (n \hat{u}_n) - \frac{a\mu}{n} \, \hat{v}_n + \frac{a^2\mu^4}{n^2} \,
	  \hat{\theta}_n \nonumber \\
	  & \hspace*{+5mm} + \left(1- \frac{a\mu^2}{n^2} \right) \bigg(\frac{\mu}{2} \sum\limits_{k=1}^{n-1}
	  \hat{\theta}_{n-k} k \hat{v}_k + \frac{\mu}{2} \sum\limits_{l=1}^\infty \hat{\theta}_{l+n} l \hat{v}_l \nonumber \\
	  & \hspace*{+5mm} + \frac{\mu}{2} \sum\limits_{l=1}^\infty \hat{\theta}_l (l+n) \hat{v}_{l+n}
	  + \mu (\hat{\theta}_0 -a) n \hat{v}_n \bigg), \nonumber \\
		F_{2,n} &:= \left(\frac{a\mu^2}{n} - \frac{a^2\mu^4}{4n}\right) \hat{v}_n
	  +\Big( \frac{\mu i}{n} - \frac{a\mu^3 i}{n} + \frac{a\mu^3}{2n^2} - \frac{a^2\mu^5}{4n^2} \Big) \hat{\theta}_n \nonumber \\
	  &\hspace*{+5mm} + \left(- \frac{\mu i}{n} + \frac{\mu}{n^2} - \frac{a\mu^3}{2n^2}\right)
	  \bigg(\frac{\mu}{2} \sum\limits_{k=1}^{n-1} \hat{\theta}_{n-k} k \hat{v}_k \nonumber \\
	  & \hspace*{+5mm} + \frac{\mu}{2} \sum\limits_{l=1}^\infty \hat{\theta}_{l+n} l \hat{v}_l
	  + \frac{\mu}{2} \sum\limits_{l=1}^\infty \hat{\theta}_l (l+n) \hat{v}_{l+n}
		+ \mu (\hat{\theta}_0 -a) n \hat{v}_n \bigg), \nonumber \\
		F_{3,n} &:= \left(\frac{a\mu^2}{n} - \frac{a^2\mu^4}{4n}\right) \hat{v}_n
	  +\Big( {-}\frac{\mu i}{n} + \frac{a\mu^3 i}{n} + \frac{a\mu^3}{2n^2} - \frac{a^2\mu^5}{4n^2} \Big) \hat{\theta}_n \nonumber \\
	  &\hspace*{+5mm} + \left(\frac{\mu i}{n} + \frac{\mu}{n^2} - \frac{a\mu^3}{2n^2}\right)
		\bigg(\frac{\mu}{2} \sum\limits_{k=1}^{n-1}
	  \hat{\theta}_{n-k} k \hat{v}_k \nonumber \\
	  & \hspace*{+5mm} + \frac{\mu}{2} \sum\limits_{l=1}^\infty \hat{\theta}_{l+n} l \hat{v}_l
		+ \frac{\mu}{2} \sum\limits_{l=1}^\infty \hat{\theta}_l (l+n) \hat{v}_{l+n}
		+ \mu (\hat{\theta}_0 -a) n \hat{v}_n \bigg) .
\end{align}

\section{Estimates of nonlinear terms}\label{estimates}
Here we estimate the nonlinear terms appearing in \eqref{nonlinsysf} and \eqref{sysprojf} in some weighted $l^2$-spaces. These
estimates correspond, on the Fourier side, to the $H^s$ estimates on the original variables and will be used in the next section to set up an appropriate fixed point argument.

We first recall Young's inequality for the discrete convolution. For sequences $\mathcal{A}= (a_n)_{n \in \Z}$ and
$\mathcal{B}= (b_n)_{n \in \Z}$ the discrete convolution is
$$(\mathcal{A} \ast \mathcal{B})_n= \sum\limits_{k \in \Z} a_{n-k}b_k, \qquad n \in \Z.$$
Then, we have Young's inequality
$$\|\mathcal{A}\ast \mathcal{B}\|_{l^r} \le \|\mathcal{A}\|_{l^p} \|\mathcal{B}\|_{l^q} \quad\mbox{for } p,q,r \in [1,\infty],
1+ \frac{1}{r} = \frac{1}{p} + \frac{1}{q}.$$
Setting $a_n=b_n=0$ for $n\le 0$, we get in particular
\begin{equation}\label{eqconv1}
  \begin{split}
	\left\| \Big(\sum\limits_{k=1}^{n-1} a_{n-k}b_k \Big)_{n \in \N} \right\|_{l^r}
	&\le \|(a_n)_{n \in \N} \|_{l^p} \|(b_n)_{n \in \N} \|_{l^q} \\
	&\mbox{for } p,q,r \in [1,\infty], 1+ \frac{1}{r} = \frac{1}{p} + \frac{1}{q}.
	\end{split}
\end{equation}
Setting in addition $d_l = b_{-l}$, $l \in \Z$, we get for $n \in \N$ with $k=-l$
$$\sum\limits_{l=1}^\infty a_{l+n}b_l = \sum\limits_{k=-\infty}^{-1} a_{n-k} b_{-k}
= \sum\limits_{k=-\infty}^{-1} a_{n-k} d_k = \sum\limits_{k \in \Z} a_{n-k} d_k$$
and an application of Young's inequality implies
\begin{equation}\label{eqconv2}
  \begin{split}
	\left\| \Big(\sum\limits_{l=1}^\infty a_{l+n}b_l \Big)_{n \in \N} \right\|_{l^r}
	&\le \|(a_n)_{n \in \N} \|_{l^p} \|(b_n)_{n \in \N} \|_{l^q} \\
	&\mbox{for } p,q,r \in [1,\infty], 1+ \frac{1}{r} = \frac{1}{p} + \frac{1}{q}.
	\end{split}
\end{equation}
As a final preparation, we need to estimate a weighted $l^1$-norm in terms of a corresponding $l^2$-norm.
\begin{lem}\label{lem4.1}
  Let $s>0$, $\beta \ge 0$ such that $s> \beta + \frac{1}{2}$. Then, there exists $c=c(s,\beta)>0$ such that for any sequence
	$(a_n)_{n \in \N}$ we have
	$$\sum\limits_{n=1}^\infty n^\beta |a_n| \le c \left( \sum\limits_{n=1}^\infty n^{2s} |a_n|^2
	\right)^{\frac{1}{2}} .$$
\end{lem}
\begin{proof}
  By the Cauchy-Schwarz inequality in $l^2$ we have
	\begin{align*}
	  \sum\limits_{n=1}^\infty n^\beta |a_n|
		&= \sum\limits_{n=1}^\infty n^s |a_n| n^{\beta-s}
		\le \left(\sum\limits_{n=1}^\infty n^{2s} |a_n|^2 \right)^{\frac{1}{2}}
		\left( \sum\limits_{n=1}^\infty n^{2(\beta-s)} \right)^{\frac{1}{2}}.
	\end{align*}
	As $s> \beta + \frac{1}{2}$ implies $2(\beta -s) < -1$, the constant
	$c:= \left( \sum\limits_{n=1}^\infty n^{2(\beta-s)} \right)^{\frac{1}{2}}$ is finite. This proves the claim.
\end{proof}
Using these preparations, we get the following estimates for the nonlinear terms in \eqref{nonlinsysf} and \eqref{sysprojf}.
We notice that the assumption $s>\frac{3}{4}$ is crucial in the lemma below.
\begin{lem}\label{lem4.2}
  Let $s\in (\frac{3}{4},1)$ and $\alpha >0$. Then, there exists $c=c(s)>0$ such that
	\begin{align*}
	  &\hspace*{-5mm} \left( \sum\limits_{n=1}^\infty n^{2s} \sup\limits_{t>0} e^{2\alpha t} \left|\frac{1}{n}
		\sum\limits_{k=1}^{n-1} \hat{\theta}_{n-k}(t) k \hat{v}_k(t) \right|^2 \right)^{\frac{1}{2}} \\
		&\le c \left( \sum\limits_{n=1}^\infty n^{2s} \sup\limits_{t>0} e^{2\alpha t} |\hat{v}_n(t)|^2
		\right)^{\frac{1}{2}} \left( \sum\limits_{n=1}^\infty n^{2s} \sup\limits_{t>0} e^{2\alpha t} |\hat{\theta}_n(t)|^2
		\right)^{\frac{1}{2}}, \\
		&\hspace*{-5mm} \left( \sum\limits_{n=1}^\infty n^{2s} \sup\limits_{t>0} e^{2\alpha t} \left| \frac{1}{n}
		\sum\limits_{l=1}^\infty \hat{\theta}_{l+n}(t) l \hat{v}_l(t) \right|^2 \right)^{\frac{1}{2}} \\
		&\le c \left( \sum\limits_{n=1}^\infty n^{2s} \sup\limits_{t>0} e^{2\alpha t} |\hat{v}_n(t)|^2 \right)^{\frac{1}{2}}
		\left( \sum\limits_{n=1}^\infty n^{2s} \sup\limits_{t>0} e^{2\alpha t} |\hat{\theta}_n(t)|^2 \right)^{\frac{1}{2}}, \\
		&\hspace*{-5mm} \left( \sum\limits_{n=1}^\infty n^{2s} \sup\limits_{t>0} e^{2\alpha t} \left| \frac{1}{n}
		\sum\limits_{l=1}^\infty \hat{\theta}_l(t) (l+n) \hat{v}_{l+n}(t) \right|^2	\right)^{\frac{1}{2}} \\
		&\le c \left( \sum\limits_{n=1}^\infty n^{2s} \sup\limits_{t>0} e^{2\alpha t} |\hat{v}_n(t)|^2 \right)^{\frac{1}{2}}
		\left( \sum\limits_{n=1}^\infty n^{2s} \sup\limits_{t>0} e^{2\alpha t} |\hat{\theta}_n(t)|^2 \right)^{\frac{1}{2}} .
	\end{align*}
\end{lem}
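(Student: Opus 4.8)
The three claimed inequalities all have the same structure: a weighted $\ell^2$-norm of a discrete convolution (after dividing by $n$) should be controlled by the product of two weighted $\ell^2$-norms. The key point is that the $\sup_{t>0}e^{2\alpha t}$ can be pulled inside: since all the convolutions are bilinear in $(\hat\theta, \hat v)$ and $e^{2\alpha t} = e^{\alpha t}\cdot e^{\alpha t}$, for each fixed $n$ and each fixed $t$ one has $e^{\alpha t}|\sum_k \hat\theta_{n-k}(t)k\hat v_k(t)| \le (\sum_k |e^{\alpha t}\hat\theta_{n-k}(t)| \cdot k|e^{\alpha t'}\hat v_k(t)|)$ — more precisely, writing $\Theta_n := \sup_{t>0}e^{\alpha t}|\hat\theta_n(t)|$ and $V_n := \sup_{t>0}e^{\alpha t}|\hat v_n(t)|$, we get the pointwise-in-$n$ bound
\[
\sup_{t>0}e^{2\alpha t}\Big|\tfrac{1}{n}\sum_{k=1}^{n-1}\hat\theta_{n-k}(t)k\hat v_k(t)\Big|^2 \le \Big(\tfrac1n\sum_{k=1}^{n-1}\Theta_{n-k}\,k\,V_k\Big)^2,
\]
and similarly for the other two sums (with $\Theta_{l+n}$, $V_{l+n}$). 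So the whole statement reduces to three purely sequential convolution estimates for the nonnegative sequences $(\Theta_n)$, $(V_n)$ in the weighted space $\|\cdot\|$ defined by $\|(a_n)\|^2 = \sum n^{2s}|a_n|^2$.

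**First sum.** For $\big(\tfrac1n\sum_{k=1}^{n-1}\Theta_{n-k}kV_k\big)_n$ I would bound $\tfrac1n\sum_{k=1}^{n-1}\Theta_{n-k}kV_k \le \sum_{k=1}^{n-1}\Theta_{n-k}V_k$ (since $k<n$), i.e. it is dominated by the convolution $(\Theta * V)_n$, and then apply Young's inequality \eqref{eqconv1}. To land in the weighted space I absorb weights: $n^{s} \le c\,((n-k)^s + k^s)$ for $k \in \{1,\dots,n-1\}$, so $n^s(\Theta*V)_n \le c\big(((n^s\Theta)*V)_n + (\Theta*(n^sV))_n\big)$; taking $\ell^2$ in $n$ and using Young with $(r,p,q)=(2,2,1)$ gives $\le c\|(n^s\Theta_n)\|_{\ell^2}\|V\|_{\ell^1} + c\|\Theta\|_{\ell^1}\|(n^sV_n)\|_{\ell^2}$. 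Finally Lemma~\ref{lem4.1} with $\beta=0$ (legitimate since $s>\tfrac34 > \tfrac12$) converts each $\ell^1$-norm into the weighted $\ell^2$-norm, yielding the claimed product bound. (In fact $\beta=0$ here uses only $s>\tfrac12$; the sharper threshold $s>\tfrac34$ is needed for the other two sums.)

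**Second and third sums.** These involve $\sum_{l\ge 1}\Theta_{l+n}\,l\,V_l$ and $\sum_{l\ge 1}\Theta_l(l+n)V_{l+n}$, which after the substitution $d_l = V_{-l}$ (resp. $d_l = \Theta_{-l}$) become genuine $\mathbb Z$-convolutions as in \eqref{eqconv2}; note that dividing by $n$ does \emph{not} kill a factor of $l$ or $l+n$ here, so these terms are genuinely more dangerous. For the third sum, $\tfrac1n(l+n)V_{l+n} \le V_{l+n} + \tfrac{l}{n}V_{l+n} \le 2V_{l+n}$ when... no — $l$ is unbounded, so instead I keep the weight: $\tfrac{l+n}{n} \le \tfrac{(l+n)}{n}$ and I'll estimate $\sum_l \Theta_l \tfrac{l+n}{n}V_{l+n}$ by splitting $(l+n)^s = $ ... better: bound $\tfrac1n\sum_l\Theta_l(l+n)V_{l+n} \le \sum_l l^{1}\Theta_l\cdot\tfrac{l+n}{ln}V_{l+n}$; since $\tfrac{l+n}{ln} = \tfrac1n + \tfrac1l \le 1 + 1 = 2$ for $l,n\ge1$, this is $\le 2\sum_l (l\Theta_l)V_{l+n}$, a $\mathbb Z$-convolution of $(l\Theta_l)$ and $V$. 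Then \eqref{eqconv2} with $(r,p,q)=(2,1,2)$ gives $\|\cdot\|_{\ell^2_n} \le \|(l\Theta_l)\|_{\ell^1}\|V\|_{\ell^2} \le \|(l\Theta_l)\|_{\ell^1}\|V\|$, and Lemma~\ref{lem4.1} with $\beta=1$ handles $\|(l\Theta_l)\|_{\ell^1} \le c\|\Theta\|$ — here the hypothesis $s>\beta+\tfrac12 = \tfrac32$ would be needed, which is \emph{too strong}. So this crude split fails, and I must instead distribute the weight with $n^s$ rather than summing $l$: write $n^s \le n^{s-1}\cdot n \le n^{s-1}(l+n) $, and $n^{s-1}(l+n)V_{l+n} \le (l+n)^s V_{l+n}$ since $s-1<0$ gives $n^{s-1}\le (l+n)^{s-1}$; thus $n^s\cdot\tfrac1n\sum_l\Theta_l(l+n)V_{l+n} \le \sum_l\Theta_l\,(l+n)^s V_{l+n}$, a $\mathbb Z$-convolution of $\Theta$ with $(n^s V_n)$. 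Young \eqref{eqconv2} with $(r,p,q)=(2,1,2)$ then gives $\le \|\Theta\|_{\ell^1}\|(n^sV_n)\|_{\ell^2} \le c\|\Theta\|\,\|V\|$ via Lemma~\ref{lem4.1} with $\beta=0$ — and now $s>\tfrac12$ suffices for \emph{that} step. The genuine constraint $s>\tfrac34$ enters in the symmetric treatment of the second sum, $\tfrac1n\sum_l\Theta_{l+n}\,l\,V_l$: here the factor $l$ sits on $V_l$ with the \emph{smaller} index, and one cannot move it onto $\Theta_{l+n}$ for free. I would write $l = l^{1-s}\cdot l^s$, bound $l^{1-s}\le n^{1-s}$ when $l\le n$ (giving $\tfrac{l}{n}\le (\tfrac ln)^s\le 1$, harmless) and $l^{1-s} \le l^{1-s}$ when $l>n$ where one instead uses $\tfrac1n \le \tfrac1{?}$...

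Let me state the mechanism cleanly: for the second sum split according to whether $l\le n$ or $l>n$. On $l\le n$: $\tfrac1n\sum_{l\le n}\Theta_{l+n}lV_l \le \sum_{l\le n}\Theta_{l+n}V_l$, a $\mathbb Z$-convolution of $\Theta$ and $V$, handled by \eqref{eqconv2} and weight-splitting exactly as in the first sum, needing only $s>\tfrac12$. On $l>n$: here $\tfrac ln$ is not bounded; write $\tfrac ln\sum_{l>n}\Theta_{l+n}V_l$ and use $n^s\cdot\tfrac ln = n^{s-1}l$; since $l>n$, $n^{s-1}<l^{s-1}$ (as $s-1<0$... wait $s<1$ so $s-1<0$, and $l>n$ gives $l^{s-1}<n^{s-1}$, wrong direction). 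Instead on $l>n$ use $l+n < 2l$ so $\Theta_{l+n}$ has index $>n$, and bound $n^s \tfrac ln \le l^s$ iff $n^{s-1}l^{1-s}\le 1$ iff $l^{1-s}\le n^{1-s}$ iff $l\le n$ — contradiction. So on $l>n$ put the weight on $\Theta$: $n^s\le (l+n)^s$ always, so $n^s\tfrac ln\Theta_{l+n}V_l \le \tfrac ln (l+n)^s\Theta_{l+n}V_l = \tfrac l n (n^s\Theta)_{l+n}\cdot V_l/(l+n)^0$; now $\tfrac ln \le \tfrac{l}{n}$, and on $l>n$ write $V_l = l^{s}V_l\cdot l^{-s}$ with... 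I would ultimately bound $\sum_{l>n}\tfrac ln (n^s\Theta)_{l+n} V_l \le \big(\sum_{l>n}\tfrac{l^2}{n^2}V_l^2\big)^{1/2}\big(\sum_{l>n}((n^s\Theta)_{l+n})^2\big)^{1/2}$ by Cauchy–Schwarz, and here $\sum_{l>n}\tfrac{l^2}{n^2}V_l^2$ summed appropriately, together with $\sum_{l>n}l^{2-2s}\cdot l^{2s-2}\cdot\dots$, is where $2-2s<\tfrac12$, i.e. $s>\tfrac34$, makes the relevant $l$-sum converge.

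**Main obstacle.** The routine parts are Young's inequality plus weight-splitting and Lemma~\ref{lem4.1}; the delicate point — and the only place the sharp threshold $s>\tfrac34$ is forced — is the second sum $\tfrac1n\sum_{l\ge1}\hat\theta_{l+n}l\hat v_l$, where the derivative-order factor $l$ is attached to the low-index factor $\hat v_l$ and there is no cheap way to transfer it; controlling the regime $l\gg n$ requires trading $l^{1-s}$ against a convergent tail sum, and that tail converges precisely because $2(1-s)<\tfrac12$.
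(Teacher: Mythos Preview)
Your reduction to purely sequential estimates via $\Theta_n := \sup_{t>0}e^{\alpha t}|\hat\theta_n(t)|$, $V_n := \sup_{t>0}e^{\alpha t}|\hat v_n(t)|$ is exactly right and matches the paper; so does the first-sum argument (the paper uses $(k/n)^{1-s}\le 1$ directly rather than your weight-splitting $n^s\lesssim (n-k)^s+k^s$, but both work and need only $s>\tfrac12$).

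There is, however, a genuine error in your third-sum argument. You write ``$n^{s-1}(l+n)V_{l+n}\le (l+n)^sV_{l+n}$ since $s-1<0$ gives $n^{s-1}\le (l+n)^{s-1}$''. This inequality is false: since $s-1<0$ the map $x\mapsto x^{s-1}$ is \emph{decreasing}, so $n\le l+n$ yields $n^{s-1}\ge (l+n)^{s-1}$, not $\le$. (You catch exactly this ``wrong direction'' a few lines later when treating the second sum, but you do not go back and repair the third.) Consequently your conclusion that ``$s>\tfrac12$ suffices'' for the third sum is unjustified, and in fact it is wrong: the third sum also requires $s>\tfrac34$. To see why, after using $n^{s-1}\le 1$ you are left with $\sum_l (l+n)\Theta_l V_{l+n}$; putting the weight $(l+n)^s$ on $V_{l+n}$ leaves a residual factor $(l+n)^{1-s}$, which on the range $l>n$ is $\le (2l)^{1-s}$ and must be absorbed into $\Theta_l$. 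That forces $\sum_l l^{1-s}\Theta_l<\infty$, i.e.\ Lemma~\ref{lem4.1} with $\beta=1-s$, hence $s>(1-s)+\tfrac12$, i.e.\ $s>\tfrac34$. The paper carries this out by splitting $l\le n$ versus $l>n$ (exactly the device you propose for the second sum), inserting a factor $l^{s-\frac12-\eps}\cdot l^{-s+\frac12+\eps}$ and using $l^{-2s+\frac32+\eps}\le 1$.

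For the second sum the paper does \emph{not} split into $l\le n$ and $l>n$; it handles all $l$ at once by writing
\[
n^{s-1}\,l\,\Theta_{l+n}V_l
= n^{s-1}l^{1-s}\cdot (l+n)^{-s+\frac12+\eps}\cdot (l+n)^{s-\frac12-\eps}\Theta_{l+n}\cdot l^sV_l,
\]
and bounding $n^{s-1}\le 1$, $(l+n)^{-s+\frac12+\eps}\le l^{-s+\frac12+\eps}$ (here the exponent is negative), so that the scalar prefactor is $\le l^{-2s+\frac32+\eps}\le 1$. Then \eqref{eqconv2} with $(r,q,p)=(2,2,1)$ and Lemma~\ref{lem4.1} with $\beta=s-\tfrac12-\eps$ finish. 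So your diagnosis that $s>\tfrac34$ enters through a ``$l^{1-s}$ versus convergent tail'' trade-off is correct, but it applies to \emph{both} the second and third sums, not just the second.
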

\begin{proof}
  Using \eqref{eqconv1} with $r=q=2$, $p=1$, $a_k:= \sup\limits_{t>0} e^{\alpha t} |\hat{\theta}_k(t)|$, and
	$b_k:= k^s \sup\limits_{t>0} e^{\alpha t} |\hat{v}_k(t)|$ as well as Lemma~\ref{lem4.1} with $\beta=0$, we have due to $s<1$,
	$\alpha >0$, and the properties of the supremum of nonnegative functions
	\begin{align*}
	  &\hspace*{-5mm} \left( \sum\limits_{n=1}^\infty n^{2s} \sup\limits_{t>0} e^{2\alpha t} \left|\frac{1}{n}
		\sum\limits_{k=1}^{n-1} \hat{\theta}_{n-k}(t) k \hat{v}_k(t) \right|^2 \right)^{\frac{1}{2}} \\
		&= \left( \sum\limits_{n=1}^\infty \sup\limits_{t>0} \left| \sum\limits_{k=1}^{n-1} \left(\frac{k}{n}\right)^{1-s} k^s
		e^{\alpha t}\hat{v}_k(t) \hat{\theta}_{n-k}(t) \right|^2 \right)^{\frac{1}{2}} \\
		&\le \left( \sum\limits_{n=1}^\infty \left|\sup\limits_{t>0} \sum\limits_{k=1}^{n-1} k^s e^{\alpha t}|\hat{v}_k(t)| e^{\alpha t}
		|\hat{\theta}_{n-k}(t)| \right|^2 \right)^{\frac{1}{2}} \\
		&\le \left( \sum\limits_{n=1}^\infty \left| \sum\limits_{k=1}^{n-1} \Big(k^s \sup\limits_{t>0} e^{\alpha t}|\hat{v}_k(t)|\Big)
		\Big( \sup\limits_{t>0}e^{\alpha t}|\hat{\theta}_{n-k}(t)| \Big)\right|^2 \right)^{\frac{1}{2}} \\
		&\le \left(\sum\limits_{n=1}^\infty n^{2s} \sup\limits_{t>0} e^{2\alpha t}|\hat{v}_n(t)|^2 \right)^{\frac{1}{2}}
		\left( \sum\limits_{n=1}^\infty \sup\limits_{t>0} e^{\alpha t} |\hat{\theta}_n(t)| \right) \\
		&\le c \left( \sum\limits_{n=1}^\infty n^{2s} \sup\limits_{t>0} e^{2\alpha t}|\hat{v}_n(t)|^2 \right)^{\frac{1}{2}}
		\left( \sum\limits_{n=1}^\infty n^{2s} \sup\limits_{t>0} e^{2\alpha t}|\hat{\theta}_n(t)|^2 \right)^{\frac{1}{2}},
	\end{align*}
	since $\beta + \frac{1}{2} = \frac{1}{2} < s$.
	
	Next, in view of $s \in (\frac{3}{4},1)$, we may fix $\eps >0$ such that $2s > \frac{3}{2} + \eps$ and $s > \frac{1}{2} + \eps$.
	Using that $(l+n)^{-s+\frac{1}{2} +\eps} \le l^{-s+\frac{1}{2} +\eps}$, $n^{s-1} \le 1$, and $l^{-2s+\frac{3}{2}+\eps} \le 1$
	for $l,n \in \N$, we get from
	\eqref{eqconv2} with $r=q=2$, $p=1$, $a_l := l^{s-\frac{1}{2}-\eps} \sup\limits_{t>0} e^{\alpha t} |\hat{\theta}_l(t)|$, and
	$b_l := l^s \sup\limits_{t>0} e^{\alpha t} |\hat{v}_l(t)|$ as well as Lemma~\ref{lem4.1} with $\beta=s-\frac{1}{2}-\eps$
	and the properties of the supremum of nonnegative functions and $\alpha >0$
	\begin{align*}
	  &\hspace*{-2mm}\left( \sum\limits_{n=1}^\infty n^{2s} \sup\limits_{t>0} e^{2\alpha t} \left| \frac{1}{n}
		\sum\limits_{l=1}^\infty \hat{\theta}_{l+n}(t) l \hat{v}_l(t) \right|^2 \right)^{\frac{1}{2}} \\
		&= \left( \sum\limits_{n=1}^\infty \sup\limits_{t>0} \left|n^{s-1} \sum\limits_{l=1}^\infty l^{1-s} l^s e^{\alpha t}
		\hat{v}_l (t)	(l+n)^{s-\frac{1}{2}-\eps} \hat{\theta}_{l+n}(t) (l+n)^{-s+\frac{1}{2} +\eps} \right|^2 \right)^{\frac{1}{2}} \\
		&\le \left( \sum\limits_{n=1}^\infty \left| \sup\limits_{t>0} \sum\limits_{l=1}^\infty l^{-2s+\frac{3}{2}+\eps} l^s
		e^{\alpha t} |\hat{v}_l(t)|
		(l+n)^{s-\frac{1}{2}-\eps}e^{\alpha t}|\hat{\theta}_{l+n}(t)| \right|^2 \right)^{\frac{1}{2}} \\
		&\le \left( \sum\limits_{n=1}^\infty \left| \sum\limits_{l=1}^\infty \Big( l^s \sup\limits_{t>0} e^{\alpha t} |\hat{v}_l(t)|
		\Big) \Big((l+n)^{s-\frac{1}{2}-\eps} \sup\limits_{t>0} e^{\alpha t} |\hat{\theta}_{l+n}(t)| \Big) \right|^2
		\right)^{\frac{1}{2}} \\
		&\le \left(\sum\limits_{n=1}^\infty n^{2s} \sup\limits_{t>0} e^{2\alpha t} |\hat{v}_n(t)|^2 \right)^{\frac{1}{2}}
		\left( \sum\limits_{n=1}^\infty n^{s-\frac{1}{2}-\eps} \sup\limits_{t>0} e^{\alpha t}|\hat{\theta}_n(t)|
		\right) \\
		&\le c \left( \sum\limits_{n=1}^\infty n^{2s} \sup\limits_{t>0} e^{2\alpha t} |\hat{v}_n(t)|^2 \right)^{\frac{1}{2}}
		\left( \sum\limits_{n=1}^\infty n^{2s} \sup\limits_{t>0} e^{2\alpha t} |\hat{\theta}_n(t)|^2 \right)^{\frac{1}{2}},
	\end{align*}
	as $\beta +\frac{1}{2} = s-\eps <s$.
	
	Similarly, using that $n^{s-1} \le 1$, $l^{s-\frac{1}{2}-\eps} \ge 1$, and $l^{-2s+\frac{3}{2}+\eps} \le 1$
	for $l,n \in \N$, we obtain from \eqref{eqconv2} with $r=p=2$ and $q=1$ as well as Lemma~\ref{lem4.1} with
	$\beta=s-\frac{1}{2}-\eps$
	\begin{align*}
	  &\hspace*{-3mm} \left( \sum\limits_{n=1}^\infty n^{2s} \sup\limits_{t>0} e^{2\alpha t} \left| \frac{1}{n}
		\sum\limits_{l=1}^\infty \hat{\theta}_l(t) (l+n) \hat{v}_{l+n}(t) \right|^2	\right)^{\frac{1}{2}} \\
		&= \bigg(\sum\limits_{n=1}^\infty \sup\limits_{t>0} \Big| n^{s-1} \sum\limits_{l=1}^n (l+n)^{1-s} (l+n)^s e^{\alpha t}
		\hat{v}_{l+n}(t) \hat{\theta}_l(t) \\
		&\hspace*{+5mm} + n^{s-1} \sum\limits_{l=n+1}^\infty (l+n)^{1-s} (l+n)^s e^{\alpha t} \hat{v}_{l+n}(t) l^{s-\frac{1}{2}-\eps}
		\hat{\theta}_l (t) l^{-s+\frac{1}{2}+\eps}\Big|^2 \bigg)^{\frac{1}{2}} \\
		&\le \bigg( \sum\limits_{n=1}^\infty \sup\limits_{t>0} \Big| \left(\frac{2n}{n} \right)^{1-s} \sum\limits_{l=1}^n (l+n)^s
		e^{\alpha t} |\hat{v}_{l+n}(t)| e^{\alpha t}|\hat{\theta}_l(t)| \\
		&\hspace*{+5mm} + \sum\limits_{l=n+1}^\infty (2l)^{1-s} (l+n)^s e^{\alpha t}|\hat{v}_{l+n}(t)| l^{s-\frac{1}{2}-\eps}
		e^{\alpha t}|\hat{\theta}_l (t)| l^{-s+\frac{1}{2}+\eps}\Big|^2 \bigg)^{\frac{1}{2}} \\
		&\le \bigg( \sum\limits_{n=1}^\infty \Big| 2^{1-s} \sum\limits_{l=1}^n \Big((l+n)^s \sup\limits_{t>0} e^{\alpha t}
		|\hat{v}_{l+n}(t)| \Big) \Big(l^{s-\frac{1}{2}-\eps} \sup\limits_{t>0} e^{\alpha t} |\hat{\theta}_l(t)|\Big)  \\
		&\hspace*{+5mm} + 2^{1-s} \sum\limits_{l=n+1}^\infty l^{-2s+\frac{3}{2}+\eps} \Big( (l+n)^s \sup\limits_{t>0} e^{\alpha t}
		|\hat{v}_{l+n}(t)| \Big) \cdot \\
		&\hspace*{+5mm} \Big( l^{s-\frac{1}{2}-\eps} \sup\limits_{t>0} e^{\alpha t}
		|\hat{\theta}_l(t)|\Big) \Big|^2 \bigg)^{\frac{1}{2}} \\
		&\le 2^{1-s} \left( \sum\limits_{n=1}^\infty \left| \sum\limits_{l=1}^\infty \Big( (l+n)^s \sup\limits_{t>0} e^{\alpha t}
		|\hat{v}_{l+n}(t)|\Big) \Big(	l^{s-\frac{1}{2}-\eps} \sup\limits_{t>0} e^{\alpha t} |\hat{\theta}_l(t)|\Big) \right|^2
		\right)^{\frac{1}{2}} \\
		&\le 2^{1-s} \left(\sum\limits_{n=1}^\infty n^{2s} \sup\limits_{t>0} e^{2\alpha t}|\hat{v}_n(t)|^2 \right)^{\frac{1}{2}}
		\left( \sum\limits_{n=1}^\infty n^{s-\frac{1}{2}-\eps} \sup\limits_{t>0} e^{\alpha t}|\hat{\theta}_n(t)|
		\right) \\
		&\le 2^{1-s} c \left( \sum\limits_{n=1}^\infty n^{2s} \sup\limits_{t>0} e^{2\alpha t} |\hat{v}_n(t)|^2 \right)^{\frac{1}{2}}
		\left( \sum\limits_{n=1}^\infty n^{2s} \sup\limits_{t>0} e^{2\alpha t} |\hat{\theta}_n(t)|^2 \right)^{\frac{1}{2}},
	\end{align*}
	as $\beta +\frac{1}{2} = s-\eps <s$. Hence, the claim is proved.
\end{proof}

\section{Fixed point argument}\label{contraction}
Here we use a fixed point argument to prove the existence of a sequence of solutions to \eqref{nonlinsys}, which for large $n$ is
equivalent to \eqref{sysproj}.
We choose $a:= \theta_\infty$, with $\theta_\infty>0$ defined in \eqref{theta_in}, and let $\lambda_{j,n}$, $j=1,2,3$, denote the
eigenvalues of $A_{n,\theta_\infty}$ (which coincide with the eigenvalues of $A_{n, \theta_\infty}^\ast$). Then in view of
Lemma~\ref{lem3.3}, with $n_0$ from that lemma we fix $N_0 \in \N$ such that
\begin{equation}\label{n0}
  \begin{split}
    & N_0 \ge \max \left\{ n_0, (2 \mu^2 \theta_\infty)^{\frac{1}{2}}, 72 (1+\theta_\infty^2)(1+ \mu^4), \frac{576 (1+\theta_\infty^2)(1+ \mu^4)}{\theta_\infty\mu} \right\}, \\
	  &\RRe(\lambda_{j,n}) \le - \frac{\mu^2 \theta_\infty}{4} \quad\mbox{for all } n \ge N_0, \, j=1,2,3, \quad\mbox{and}\\
		& |(C_n)_{k,l}| \le 2, \, |(C_n^{-1})_{k,l}| \le 2 \quad\mbox{for all } n \ge N_0, \, k,l = 1,2,3,
	\end{split}
\end{equation}
where $C_n$ and $C_n^{-1}$ are defined in \eqref{cd} and \eqref{cinv}.
Then in view of Lemma~\ref{lem3.1} we choose $\alpha>0$ such that
\begin{equation}\label{alpha}
\begin{split}
& \alpha := \min\{\alpha_1, \alpha_2\} \quad\mbox{with}\quad \alpha_2:=\frac{\mu^2\theta_\infty}{4}, \\
& \alpha_1 :=\frac{1}{3}\min\{ -\RRe(\lambda_{j,n}) \: : \: j=1,2,3, n<N_0\} .
\end{split}
\end{equation}
Next, for $s \in (0,1)$ we define the following norm for the sequence $z (t) = (z_n(t))_{n \in \N}$:
\[
|z|_s=\sqrt{\sum_{n=1}^\infty n^{2s} \sup_{t>0} e^{2\alpha t} |z_n(t)|^2}.
\]
Next, we define the space $X$, consisting of all the vectors $(U_{1}, U_2, U_3, \hat{\theta}_0)$, where
$U_{j} = (U_{j,n})_{n \in \N}$ are sequences of complex valued functions $U_{j,n}:[0,\infty) \to \C$,
$\hat{\theta}_0:[0,\infty)\rightarrow \C$, with a finite norm
\begin{equation}\label{pX}
\|(U_1,U_2,U_3,\hat{\theta}_0)\|_{X}:=\max\{|U_j|_s, j=1,2,3,\sup_{t>0}|\hat{\theta}_0(t)|\}.
\end{equation}
Next, consider $\overline{B((0,0,0,\theta_\infty),R)}$ of radius $R$ in $X$, centered at $(0,0,0,\theta_\infty)$.

Before proceeding with a Banach contraction argument in $X$, let us state the following preparatory lemma.
\begin{lem}\label{calka}
Let $\beta, \gamma\in \r$ such that $\beta > \gamma$. For $f:[0,\infty)\rightarrow \r$ consider the function $g$ given as
\[
g(t):=\int_0^t e^{-\beta(t-\sigma)}|f(\sigma)|\;d\sigma.
\]
Then,
\begin{equation}\label{pomocnicze}
\sup_{t>0}e^{\gamma t}|g(t)|\leq \frac{1}{\beta-\gamma}\, \sup_{t>0}e^{\gamma t} |f(t)|.
\end{equation}
\end{lem}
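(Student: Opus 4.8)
The plan is to reduce to a direct pointwise estimate. Set $M := \sup_{t>0} e^{\gamma t}|f(t)|$. If $M = +\infty$ the inequality \eqref{pomocnicze} holds trivially, so I may assume $M < \infty$; then $|f(\sigma)| \le M e^{-\gamma \sigma}$ for all $\sigma > 0$. Note also that, since $|f| \ge 0$, the function $g$ is nonnegative, so $|g(t)| = g(t)$ and I only need to bound $g$ from above.

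Next I would insert the pointwise bound on $|f|$ into the integral defining $g$ and compute the resulting elementary integral:
\[
|g(t)| \le \int_0^t e^{-\beta(t-\sigma)} M e^{-\gamma\sigma}\, d\sigma = M e^{-\beta t}\int_0^t e^{(\beta-\gamma)\sigma}\, d\sigma = M e^{-\beta t}\,\frac{e^{(\beta-\gamma)t}-1}{\beta-\gamma},
\]
where the division by $\beta-\gamma$ is legitimate precisely because of the hypothesis $\beta>\gamma$.

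Finally I would multiply by $e^{\gamma t}$ and simplify:
\[
e^{\gamma t}|g(t)| \le M\,\frac{e^{(\gamma-\beta)t}\big(e^{(\beta-\gamma)t}-1\big)}{\beta-\gamma} = M\,\frac{1 - e^{(\gamma-\beta)t}}{\beta-\gamma} \le \frac{M}{\beta-\gamma},
\]
using that $e^{(\gamma-\beta)t} > 0$ for all $t$ (again $\beta > \gamma$ is what makes $1 - e^{(\gamma-\beta)t} \le 1$ the right direction). Taking the supremum over $t>0$ yields \eqref{pomocnicze}. There is no real obstacle here: the statement is a standard convolution/Gr\"onwall-type estimate, and the only points requiring a word of care are the trivial reduction to $M<\infty$, the observation $|g| = g$, and keeping track that the sign of $\beta-\gamma$ is used twice in the same (correct) direction.
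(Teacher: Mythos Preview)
Your proof is correct and follows essentially the same approach as the paper: both bound $|f(\sigma)|$ by $Me^{-\gamma\sigma}$ with $M=\sup_{t>0}e^{\gamma t}|f(t)|$, evaluate the resulting exponential integral, and use $\beta>\gamma$ to drop the term $e^{-(\beta-\gamma)t}$. Your additional remarks on the trivial case $M=\infty$ and on $|g|=g$ are minor clarifications but do not change the argument.
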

\begin{proof}
We calculate
\[
e^{\gamma t} |g(t)|\leq e^{\gamma t}\int_0^t e^{-\beta(t-\sigma)}|f(\sigma)|\;d\sigma
\leq c_1e^{\gamma t}\int_0^te^{-\gamma \sigma} e^{-\beta(t-\sigma)}d\sigma,
\]
where $c_1:=\sup_{t>0}e^{\gamma t}|f(t)|$. Next, since
\[
\int_0^te^{(\beta-\gamma)\sigma}d\sigma=\frac{1}{\beta-\gamma}\left(e^{(\beta-\gamma)t}-1\right),
\]
we observe that
\[
e^{\gamma t} |g(t)|\leq \frac{c_1}{\beta-\gamma}\left(1- e^{-(\beta-\gamma)t}\right)
\leq \frac{c_1}{\beta-\gamma},
\]
and the lemma is proven.
\end{proof}
We are now in a position to define a mapping
\[
\mathcal{F}:\overline{B((0,0,0,\theta_\infty),R)}\subset X\rightarrow \overline{B((0,0,0,\theta_\infty),R)}
\]
that is a contraction. We shall next apply the Banach contraction mapping theorem to our mapping $\mathcal{F}$. We first fix initial data $(U_1(0),U_2(0),U_3(0)$, $\hat{\theta}_0(0))$ such that
\begin{equation}\label{eqini}
    \max\left\{ \left(\sum\limits_{n=1}^\infty n^{2s} |U_{j,n}(0)|^2
	  \right)^{\frac{1}{2}}, \, j=1,2,3, \, |\hat{\theta}_0(0)-\theta_\infty| \right\} \le \delta R
\end{equation}
with some $\delta \in (0, \frac{1}{4}]$ to be fixed below. We then define $\mathcal{F}:\overline{B((0,0,0,\theta_\infty),R)}$
$\subset X\rightarrow X
$,
\begin{equation}\label{kontrakcja}
\mathcal{F}(A_1, A_2, A_3, h)=(\Psi_1, \Psi_2, \Psi_3, \xi),
\end{equation}
in the following way: We first take $n \ge N_0$. Then in view of \eqref{proj} we have
$(U_{1,n}, U_{2,n}, U_{3,n})^T = C_n^T (n\hat{u}_n, \hat{v}_n, \hat{\theta}_n)^T$, where $C_n$ is defined in \eqref{cd}.
Hence, with $B_n := C_n^T$ we have $B_n^{-1} = (C_n^{-1})^T$ with $C_n^{-1}$ from \eqref{cinv} and we denote
$(a_{1,n}, a_{2,n}, a_{3,n})^T= B_n^{-1} (A_{1,n}, A_{2,n}, A_{3,n})^T$. In view of \eqref{sysproj}, which is equivalent to
\eqref{nonlinsys} for such $n$, and \eqref{f2} for $\hat{\theta}_0$, we define
\begin{align}
\Psi_{1,n}(t)&:=e^{(-n^2+\theta_\infty\mu^2) t} U_{1,n}(0)+\int_0^te^{(-n^2+\theta_\infty \mu^2)(t-\sigma)}F_{1,n}(\sigma)d\sigma,
\label{kontr1}\\
\Psi_{2,n}(t)&:=e^{(-ni-\frac{\theta_\infty\mu^2}{2}) t} U_{2,n}(0)+\int_0^t e^{(-ni-\frac{\theta_\infty\mu^2}{2})(t-\sigma)}
F_{2,n}(\sigma)d\sigma, \label{kontr2}\\
\Psi_{3,n}(t)&:=e^{(ni-\frac{\theta_\infty\mu^2}{2}) t} U_{3,n}(0)+\int_0^t e^{(ni-\frac{\theta_\infty\mu^2}{2})(t-\sigma)}
F_{3,n}(\sigma)d\sigma, \label{kontr3}\\
\xi(t) &:= \hat{\theta}_0(0) + \frac{\mu}{2} \int_0^t \sum_{l=1}^\infty f_l(\sigma) d\sigma \label{kontr4},
\end{align}
where for $l \in \N$ and $j=1,2,3$ we set
\begin{align}\label{F}
  f_l&:=a_{3,l}la_{2,l} \nonumber \\
	F_{1,n} &:= \frac{\theta_\infty^2\mu^3}{n^2} \, a_{1,n} - \frac{\theta_\infty\mu}{n} \, a_{2,n}
	+ \frac{\theta_\infty^2\mu^4}{n^2} \, a_{3,n} \nonumber \\
	& \hspace*{+5mm} + \left(1- \frac{\theta_\infty\mu^2}{n^2} \right) \bigg(\frac{\mu}{2} \sum\limits_{k=1}^{n-1}
	  a_{3,n-k} k a_{2,k} + \frac{\mu}{2} \sum\limits_{l=1}^\infty a_{3,l+n} l a_{2,l} \nonumber \\
	  & \hspace*{+5mm} + \frac{\mu}{2} \sum\limits_{l=1}^\infty a_{3,l} (l+n) a_{2,l+n}
	  + \mu (h-\theta_\infty) n a_{2,n} \bigg), \nonumber \\
		F_{2,n} &:= \left(\frac{\theta_\infty \mu^2}{n} - \frac{\theta_\infty^2\mu^4}{4n}\right) a_{2,n}
	  +\Big( \frac{\mu i}{n} - \frac{\theta_\infty\mu^3 i}{n} + \frac{\theta_\infty\mu^3}{2n^2} - \frac{\theta_\infty^2\mu^5}{4n^2}
		\Big) a_{3,n} \nonumber \\
	  &\hspace*{+5mm} + \left(- \frac{\mu i}{n} + \frac{\mu}{n^2} - \frac{\theta_\infty\mu^3}{2n^2}\right)
	  \bigg(\frac{\mu}{2} \sum\limits_{k=1}^{n-1} a_{3,n-k} k a_{2,k} \nonumber \\
	  & \hspace*{+5mm} + \frac{\mu}{2} \sum\limits_{l=1}^\infty a_{3,l+n} l a_{2,l}
	  + \frac{\mu}{2} \sum\limits_{l=1}^\infty a_{3,l} (l+n) a_{2,l+n}
		+ \mu (h-\theta_\infty) n a_{2,n} \bigg), \nonumber \\
		F_{3,n} &:= \left(\frac{\theta_\infty\mu^2}{n} - \frac{\theta_\infty^2\mu^4}{4n}\right) a_{2,n}
	  +\Big({-}\frac{\mu i}{n} + \frac{\theta_\infty\mu^3 i}{n} + \frac{\theta_\infty\mu^3}{2n^2} - \frac{\theta_\infty^2\mu^5}{4n^2}
		\Big) a_{3,n} \nonumber \\
	  &\hspace*{+5mm} + \left(\frac{\mu i}{n} + \frac{\mu}{n^2} - \frac{\theta_\infty\mu^3}{2n^2}\right)
		\bigg(\frac{\mu}{2} \sum\limits_{k=1}^{n-1}
	  a_{3,n-k} k a_{2,k} \nonumber \\
	  & \hspace*{+5mm} + \frac{\mu}{2} \sum\limits_{l=1}^\infty a_{3,l+n} l a_{2,l}
		+ \frac{\mu}{2} \sum\limits_{l=1}^\infty a_{3,l} (l+n) a_{2,l+n}
		+ \mu (h-\theta_\infty) n a_{2,n} \bigg) ,
\end{align}
where $a_{j,n}$ for $j=1,2,3$ and $n <N_0$ are defined below.

We next take $n < N_0$. As we do not know whether $A_{n,\theta_\infty}$ from \eqref{linsysa} is diagonalizable for all such $n$,
let $B_n \in \C^{3 \times 3}$ be chosen such that the columns of $B_n^T$ form a Jordan basis of $A_{n,\theta_\infty}$ and
$A_{n,\theta_\infty} = B_n^T J_n (B_n^T)^{-1}$ with $J_n$ being the Jordan normal form of $A_{n,\theta_\infty}$. Then, similarly
to \eqref{proj}, $(U_{1,n}, U_{2,n}, U_{3,n})^T = B_n (n\hat{u}_n, \hat{v}_n, \hat{\theta}_n)^T$ are the projections with respect
to the Jordan basis and we denote $(a_{1,n}, a_{2,n}, a_{3,n})^T= B_n^{-1} (A_{1,n}, A_{2,n}, A_{3,n})^T$. In view of
\eqref{nonlinsys}, we then define
\begin{align}
&\begin{pmatrix} \Psi_{1,n}(t) \\ \Psi_{2,n}(t) \\ \Psi_{3,n}(t) \end{pmatrix}
:= B_n \left( e^{t A_{n,\theta_\infty}} B_n^{-1}\begin{pmatrix} U_{1,n}(0) \\ U_{2,n}(0) \\ U_{3,n}(0) \end{pmatrix}
+\int_0^t e^{(t-\sigma)A_{n,\theta_\infty}} g_n(\sigma) d\sigma \right),
\label{kontr1a} 
\end{align}
with 
\begin{align}\label{g}
  g_n &:= \begin{pmatrix} 0 \\ 0 \\ g_{3,n} \end{pmatrix}, \qquad\mbox{where} \nonumber \\
  g_{3,n} &:=	\frac{\mu}{2} \sum\limits_{k=1}^{n-1}
	a_{3,n-k} k a_{2,k} + \frac{\mu}{2} \sum\limits_{l=1}^\infty a_{3,l+n} l a_{2,l} \nonumber \\
	& \hspace*{+5mm} + \frac{\mu}{2} \sum\limits_{l=1}^\infty a_{3,l} (l+n) a_{2,l+n}
	+ \mu (h-\theta_\infty) n a_{2,n} .
\end{align}
We are now in the position to prove that $\mathcal{F}$ has a fixed point.
\begin{prop}\label{prop5.2}
  Let $N_0 \in \N$ and $\alpha >0$ be as defined in \eqref{n0} and \eqref{alpha}, $\theta_\infty >0$ be as defined in \eqref{theta_in},
	and $s \in (\frac{3}{4},1)$.
	Then there are $R>0$ and $\delta \in (0,\frac{1}{4}]$ such that, for any initial data $(U_1(0),U_2(0),U_3(0),\hat{\theta}_0(0))$
	satisfying \eqref{eqini}, the mapping $\mathcal{F} : \overline{B((0,0,0,\theta_\infty),R)} \subset X \to X$ defined in \eqref{kontrakcja}--\eqref{g}
	has a unique fixed point $(U_1,U_2,U_3,\hat{\theta}_0) \in \overline{B((0,0,0,\theta_\infty),R)}$.
\end{prop}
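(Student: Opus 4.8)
The plan is to apply the Banach fixed point theorem on the complete metric space $\overline{B((0,0,0,\theta_\infty),R)}\subset X$, completeness of $X$ with the norm \eqref{pX} being immediate since it is a weighted $\ell^2$-sum of sup-normed spaces of functions on $[0,\infty)$. Writing $\mathcal{A}=(A_1,A_2,A_3,h)$, two things must be verified for suitably small $R$ and $\delta\in(0,\tfrac14]$: that $\mathcal{F}$ maps the ball into itself (in particular that the defining series and integrals converge), and that it is a contraction on it. Throughout I will use that for $\mathcal{A}$ in the ball one has $|A_j|_s\le R$, $\sup_{t>0}|h(t)-\theta_\infty|\le R$ (hence $\sup_{t>0}|h(t)|\le\theta_\infty+R$), and that the transformed sequences $a_{j,n}$ — obtained via $B_n^{-1}=(C_n^{-1})^T$ for $n\ge N_0$ and via a fixed Jordan change of basis for the finitely many $n<N_0$ — satisfy $|a_j|_s\le CR$, with $C$ depending only on the bound $|(C_n^{-1})_{k,l}|\le2$ from \eqref{n0} and on the finitely many $B_n^{-1}$ with $n<N_0$.

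For the invariance I would bound each component of $\mathcal{F}(\mathcal{A})$. For $n\ge N_0$ the homogeneous parts of $\Psi_{j,n}$ in \eqref{kontr1}--\eqref{kontr3} satisfy $\sup_{t>0}e^{\alpha t}|\Psi^{\mathrm{hom}}_{j,n}(t)|=|U_{j,n}(0)|$, since the real part of every exponent is $\le-2\alpha$ (for $\Psi_{1,n}$ because $n^2\ge N_0^2\ge2\theta_\infty\mu^2$, for $\Psi_{2,n},\Psi_{3,n}$ because the real part equals $-\theta_\infty\mu^2/2=-2\alpha_2\le-2\alpha$); summing against $n^{2s}$ and using \eqref{eqini} bounds this by $\delta R$. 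The Duhamel parts are controlled by Lemma~\ref{calka} with $\gamma=\alpha$ and $\beta=n^2-\theta_\infty\mu^2$ for $\Psi_{1,n}$, $\beta=\theta_\infty\mu^2/2$ for $\Psi_{2,n},\Psi_{3,n}$, giving gains $\tfrac1{\beta-\alpha}\le\tfrac{8}{3n^2}$, resp.\ $\le\tfrac{4}{\theta_\infty\mu^2}$; inserting the explicit $F_{j,n}$ from \eqref{F}, the genuinely linear terms carry an extra $\tfrac1n$ or $\tfrac1{n^2}$ and hence contribute $\le\tfrac{C}{N_0}R$ to the $|\cdot|_s$-norm, the term $\mu(h-\theta_\infty)na_{2,n}$ is harmless because the factor $n$ is killed either by the $\tfrac1{n^2}$ gain in $\Psi_{1,n}$ or by the $\tfrac1n$ prefactor in $F_{2,n},F_{3,n}$, and the quadratic convolution terms are estimated, using the three inequalities of Lemma~\ref{lem4.2} with the sequences $a_{3,\cdot}$ and $a_{2,\cdot}$, by $CR^2$. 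For $n<N_0$ I would use $\|e^{tA_{n,\theta_\infty}}\|\le C_{N_0}e^{-2\alpha t}$, valid because $\RRe(\lambda_{j,n})\le-3\alpha$ by \eqref{alpha} so that the at most quadratic Jordan factors in $t$ are absorbed, again with Lemma~\ref{calka} and a crude Cauchy--Schwarz bound on the finitely many $g_{3,n}$ (using $2-2s\le2s$); this gives $\le C_{N_0}(\delta R+R^2)$. Finally, for $\xi$ in \eqref{kontr4} one bounds $\sum_{l\ge1}|f_l(\sigma)|=\sum_{l\ge1}l|a_{3,l}(\sigma)||a_{2,l}(\sigma)|\le e^{-2\alpha\sigma}|a_2|_s|a_3|_s$ by Cauchy--Schwarz and $2-2s\le2s$, so $\sup_{t>0}|\xi(t)-\theta_\infty|\le\delta R+\tfrac\mu{4\alpha}|a_2|_s|a_3|_s\le\delta R+CR^2$. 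Altogether $\|\mathcal{F}(\mathcal{A})-(0,0,0,\theta_\infty)\|_X\le C_1\delta R+C_2R^2+\tfrac{C_3}{N_0}R$; the quantitative lower bounds $N_0\ge72(1+\theta_\infty^2)(1+\mu^4)$ and $N_0\ge\tfrac{576(1+\theta_\infty^2)(1+\mu^4)}{\theta_\infty\mu}$ in \eqref{n0} are exactly what forces $\tfrac{C_3}{N_0}R\le\tfrac14R$, after which choosing $R$ small enough that $C_2R\le\tfrac14$ and then $\delta$ small enough that $C_1\delta\le\tfrac14$ (compatibly with $\delta\le\tfrac14$) yields $\le R$.

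For the contraction, take $\mathcal{A},\tilde{\mathcal{A}}$ in the ball; the initial-data terms in \eqref{kontr1}--\eqref{kontr4} cancel in $\mathcal{F}(\mathcal{A})-\mathcal{F}(\tilde{\mathcal{A}})$, so each entry is a Duhamel integral of $F_{j,n}-\tilde F_{j,n}$, resp.\ of $\tfrac\mu2\sum_l(f_l-\tilde f_l)$. Since $a_{j,n}\mapsto A_{j,n}$ is linear, $a_{j,n}-\tilde a_{j,n}$ obeys the same estimates as $a_{j,n}$ with $R$ replaced by $\|\mathcal{A}-\tilde{\mathcal{A}}\|_X$, and each term of $F_{j,n}-\tilde F_{j,n}$ is either linear in $a_{j,n}-\tilde a_{j,n}$ — controlled by $\tfrac{C}{N_0}\|\mathcal{A}-\tilde{\mathcal{A}}\|_X$ as above — or, after writing $a_3a_2-\tilde a_3\tilde a_2=(a_3-\tilde a_3)a_2+\tilde a_3(a_2-\tilde a_2)$ and $(h-\theta_\infty)a_2-(\tilde h-\theta_\infty)\tilde a_2=(h-\tilde h)a_2+(\tilde h-\theta_\infty)(a_2-\tilde a_2)$, bilinear with one factor of size $\le CR$, hence controlled by $CR\,\|\mathcal{A}-\tilde{\mathcal{A}}\|_X$ through Lemma~\ref{lem4.2} and the $\xi$-bound above; the block $n<N_0$ is handled the same way with the finite constant $C_{N_0}$. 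Thus $\|\mathcal{F}(\mathcal{A})-\mathcal{F}(\tilde{\mathcal{A}})\|_X\le(C_4R+\tfrac{C_5}{N_0})\|\mathcal{A}-\tilde{\mathcal{A}}\|_X$, and shrinking $R$ once more (independently of $\delta$) makes the constant $\le\tfrac12$, so the Banach fixed point theorem yields a unique fixed point in $\overline{B((0,0,0,\theta_\infty),R)}$. The main obstacle — and the reason $N_0$ must be quantitatively large, $s>\tfrac34$, and the explicit prefactors in \eqref{F} matter — is to absorb the factor-$n$ term $\mu(\hat\theta_0-a)n\hat v_n$ and the borderline weights $n^{2s},l^{2-2s}$ without sacrificing the smallness needed to close both the invariance and the contraction estimates simultaneously.
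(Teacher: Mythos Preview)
Your proposal is correct and follows essentially the same approach as the paper's proof: both establish invariance and contraction on $\overline{B((0,0,0,\theta_\infty),R)}$ via Lemma~\ref{calka} for the Duhamel terms, Lemma~\ref{lem4.2} for the quadratic convolutions, the Jordan-form bound $\|e^{tA_{n,\theta_\infty}}\|\le C_{N_0}e^{-2\alpha t}$ for the finitely many low modes, and the quantitative largeness of $N_0$ in \eqref{n0} to absorb the $O(1/n)$ linear remainders. The paper carries out the estimates with explicit constants (writing out \eqref{eqsm1}--\eqref{eqco11} in full), while you give the same argument in condensed form, but the structure and all key mechanisms coincide.
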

\begin{proof}
  Clearly $\overline{B((0,0,0,\theta_\infty),R)}$ is a nonempty and closed subset of $X$. We next show that $\mathcal{F}$ is a self mapping and a
	contraction if $R$ and $\delta$ are chosen appropriately.
	
	\textbf{Step 1: Self mapping of a ball} \\
	Let $(A_1,A_2,A_3,h) \in \overline{B((0,0,0,\theta_\infty),R)}$. We aim to show $(\Psi_1, \Psi_2, \Psi_3, \xi) \in \overline{B((0,0,0,\theta_\infty),R)}$.
	We first consider $\Psi_1$ and $n \ge N_0$. In view of \eqref{n0} and \eqref{alpha}, we have $n^2 \ge 2 \theta_\infty \mu^2$ and
	hence $-n^2 + \theta_\infty \mu^2 \le -\frac{n^2}{2}$ and $-n^2 + \theta_\infty \mu^2 \le - \frac{\theta_\infty \mu^2}{2}
	\le -2\alpha$ and $\frac{n^2}{2} \ge \theta_\infty \mu^2 \ge 4 \alpha >0$, which implies $\frac{n^2}{2}-\alpha \ge
	\frac{n^2}{2}-\frac{n^2}{8} = \frac{3n^2}{8} \ge n$, since $N_0 \ge 72$. Thus, due to \eqref{kontr1} and Lemma~\ref{calka}, we
	have
	\begin{align}\label{eqsm1}
	  \sup\limits_{t>0} e^{\alpha t} |\Psi_{1,n}(t)|
		&\le \sup\limits_{t>0} e^{\alpha t} \Big( e^{(-n^2+\theta_\infty\mu^2) t} |U_{1,n}(0)| \nonumber \\
		&\hspace*{+5mm} +\int_0^t e^{(-n^2+\theta_\infty \mu^2)(t-\sigma)}|F_{1,n}(\sigma)|d\sigma \Big) \nonumber \\
		&\le |U_{1,n}(0)| + \sup\limits_{t>0} e^{\alpha t} \int_0^t e^{-\frac{n^2}{2}(t-\sigma)}|F_{1,n}(\sigma)|d\sigma \nonumber \\
		&\le |U_{1,n}(0)| + \frac{1}{\frac{n^2}{2}-\alpha} \, \sup\limits_{t>0} e^{\alpha t} |F_{1,n}(t)| \nonumber \\
		&\le |U_{1,n}(0)| + \frac{1}{n} \, \sup\limits_{t>0} e^{\alpha t} |F_{1,n}(t)|, \qquad n \ge N_0.
  \end{align}
	In view of $(a_{1,n}, a_{2,n}, a_{3,n})^T= B_n^{-1} (A_{1,n}, A_{2,n}, A_{3,n})^T$ and $B_n^{-1} = (C_n^{-1})^T$, \eqref{n0}
	implies $|(B_n^{-1})_{k,l}| \le 2$ and
	\begin{equation}\label{eqsm2}
	  |a_{j,n}| \le 2 (|A_{1,n}| + |A_{2,n}| + |A_{3,n}|), \qquad n \ge N_0, \, j=1,2,3 .
	\end{equation}
	Inserting this into \eqref{F} and using \eqref{n0}, we get
	\begin{align}\label{eqsm3}
	  \frac{1}{n} \, |F_{1,n}|
		&\le \frac{6 (1+\theta_\infty^2)(1+ \mu^4)}{n}(|A_{1,n}| + |A_{2,n}| + |A_{3,n}|) \nonumber \\
		&\hspace*{+5mm} + \frac{\mu(1+\theta_\infty \mu^2)}{2} \bigg( \frac{1}{n}\sum\limits_{k=1}^{n-1}
	  |a_{3,n-k}| k |a_{2,k}| + \frac{1}{n} \sum\limits_{l=1}^\infty |a_{3,l+n}| l |a_{2,l}|  \nonumber \\
	  & \hspace*{+5mm} + \frac{1}{n} \sum\limits_{l=1}^\infty |a_{3,l}| (l+n) |a_{2,l+n}| \bigg) \nonumber \\
	  & \hspace*{+5mm} + 2\mu (1+\theta_\infty \mu^2) |h-\theta_\infty| (|A_{1,n}| + |A_{2,n}| + |A_{3,n}|) \nonumber \\
		&\le \frac{1}{12}(|A_{1,n}| + |A_{2,n}| + |A_{3,n}|) \nonumber \\
		&\hspace*{+5mm} + \frac{\mu(1+\theta_\infty \mu^2)}{2} \bigg( \frac{1}{n}\sum\limits_{k=1}^{n-1}
	  |a_{3,n-k}| k |a_{2,k}| + \frac{1}{n} \sum\limits_{l=1}^\infty |a_{3,l+n}| l |a_{2,l}|  \nonumber \\
	  &\hspace*{+5mm} + \frac{1}{n} \sum\limits_{l=1}^\infty |a_{3,l}| (l+n) |a_{2,l+n}| \bigg) \nonumber \\
		&\hspace*{+5mm} + 2\mu (1+\theta_\infty \mu^2) R (|A_{1,n}| + |A_{2,n}| + |A_{3,n}|) .
	\end{align}
	Next, let $n < N_0$. As \eqref{alpha} implies that $\RRe(\lambda_{j,n}) \le -3\alpha$ for $j=1,2,3$, and $A_{n,\theta_\infty}$
	is a $(3 \times 3)$-matrix, there are $\tilde{c}(n), c(n)>0$ such that $|e^{t A_{n,\theta_\infty}} z| \le \tilde{c}(n)
	(1+t^2)e^{-3\alpha t} |z| \le c(n) e^{-2\alpha t}|z|$ for all $t\ge 0$ and all $z \in \R^3$. Due to the equivalence of norms
	in a finite dimensional vector space, there is $c_1 \ge 1$ such that
	\begin{equation}\label{eqsm4}
	  \begin{split}
	   &|(B_n e^{t A_{n,\theta_\infty}} B_n^{-1} z)_j|+  |(B_n e^{t A_{n,\theta_\infty}} z)_j|
		\le c_1 e^{-2\alpha t} (|z_1|+|z_2|+|z_3|) \\
		& \mbox{for all } t \ge 0, \, j=1,2,3 \mbox{ and all } n < N_0 .
		\end{split}
	\end{equation}
	Hence, \eqref{kontr1a} and Lemma~\ref{calka} imply for $j = 1,2,3$ and $n < N_0$
	\begin{align}\label{eqsm5}
	  \sup\limits_{t>0} e^{\alpha t} |\Psi_{j,n}(t)|
		&\le \sup\limits_{t>0} e^{\alpha t} \Big( c_1 e^{-2\alpha t}  (|U_{1,n}(0)|+ |U_{2,n}(0)| + |U_{3,n}(0)|) \nonumber \\
		&\hspace*{+5mm} + c_1 \int_0^t e^{-2\alpha(t-\sigma)}|g_{3,n}(\sigma)|d\sigma \Big) \nonumber \\
		&\le c_1 (|U_{1,n}(0)|+ |U_{2,n}(0)| + |U_{3,n}(0)|) \nonumber \\
		&\hspace*{+5mm} + c_1 \sup\limits_{t>0} e^{\alpha t} \int_0^t e^{-2\alpha(t-\sigma)}|g_{3,n}(\sigma)|d\sigma \nonumber \\
		&\le c_1 (|U_{1,n}(0)|+ |U_{2,n}(0)| + |U_{3,n}(0)|) \nonumber \\
		&\hspace*{+5mm} + \frac{c_1}{2\alpha -\alpha} \, \sup\limits_{t>0} e^{\alpha t} |g_{3,n}(t)| \nonumber \\
		&\le c_1 (|U_{1,n}(0)|+ |U_{2,n}(0)| + |U_{3,n}(0)|) \nonumber \\
		&\hspace*{+5mm} + \frac{c_1}{\alpha} \, \sup\limits_{t>0} e^{\alpha t} |g_{3,n}(t)|, \quad n < N_0 .
	\end{align}
	For any $n < N_0$ we have $(a_{1,n}, a_{2,n}, a_{3,n})^T= B_n^{-1} (A_{1,n}, A_{2,n}, A_{3,n})^T$. Since there is
	$c_2 \ge 2$ such that $|(B_n^{-1})_{k,l}| \le c_2$ for all $n < N_0$ and all $k,l = 1,2,3$, we obtain from \eqref{eqsm2}
	\begin{equation}\label{eqsm2a}
	  |a_{j,n}| \le c_2 (|A_{1,n}| + |A_{2,n}| + |A_{3,n}|), \qquad n \in \N, \, j=1,2,3 .
	\end{equation}
	By \eqref{g} and \eqref{eqsm2a}, we further have
	\begin{align}\label{eqsm6}
	  \frac{c_1}{\alpha} \, |g_{3,n}| &\le \frac{c_1 \mu N_0}{2\alpha} \bigg( \frac{1}{n}\sum\limits_{k=1}^{n-1}
	  |a_{3,n-k}| k |a_{2,k}| + \frac{1}{n} \sum\limits_{l=1}^\infty |a_{3,l+n}| l |a_{2,l}|  \nonumber \\
	  & \hspace*{+5mm} + \frac{1}{n} \sum\limits_{l=1}^\infty |a_{3,l}| (l+n) |a_{2,l+n}| \bigg)  \nonumber \\
		& \hspace*{+5mm} + \frac{c_1 c_2 \mu N_0 R}{\alpha} (|A_{1,n}| + |A_{2,n}| + |A_{3,n}|), \quad n < N_0 .
	\end{align}
	Combining \eqref{eqsm1} and \eqref{eqsm3} for $n \ge N_0$ with \eqref{eqsm5} and \eqref{eqsm6} for $n<N_0$ and using the
	triangle inequality, $c_1 \ge 1$, and $c_2 \ge 2$, we have by \eqref{eqini}, Lemma~\ref{lem4.2}, and \eqref{eqsm2a},
	where $c_3 := \max\{
	\frac{\mu(1+\theta_\infty \mu^2)}{2}, \frac{c_1 \mu N_0}{2\alpha} \}$ and $c_4=c_4(s)$ denotes the constant from
	Lemma~\ref{lem4.2},
	\begin{align*}
	  |\Psi_1|_s &= \left( \sum_{n=1}^\infty n^{2s} \sup_{t>0} e^{2\alpha t} |\Psi_{1,n}(t)|^2 \right)^{\frac{1}{2}} \nonumber \\
		&\le \sum\limits_{j=1}^3 c_1 \left( \sum_{n=1}^\infty n^{2s} |U_{j,n}(0)|^2 \right)^{\frac{1}{2}} \nonumber \\
		&\hspace*{+5mm} + \sum\limits_{j=1}^3 \frac{1}{12} \left( \sum_{n=1}^\infty n^{2s} \sup_{t>0} e^{2\alpha t} |A_{j,n}(t)|^2
		\right)^{\frac{1}{2}} \nonumber \\
		&\hspace*{+5mm} + \sum\limits_{j=1}^3 2c_2c_3R \left( \sum_{n=1}^\infty n^{2s} \sup_{t>0} e^{2\alpha t} |A_{j,n}(t)|^2
		\right)^{\frac{1}{2}} \nonumber \\
		&\hspace*{+5mm} + c_3\left( \sum_{n=1}^\infty n^{2s} \sup_{t>0} e^{2\alpha t} \left|
		\frac{1}{n}\sum\limits_{k=1}^{n-1} |a_{3,n-k}(t)| k |a_{2,k}(t)| \right|^2 \right)^{\frac{1}{2}} \nonumber \\
		&\hspace*{+5mm} + c_3 \left( \sum_{n=1}^\infty n^{2s} \sup_{t>0} e^{2\alpha t} \left|
		\frac{1}{n} \sum\limits_{l=1}^\infty |a_{3,l+n}(t)| l |a_{2,l}(t)| \right|^2 \right)^{\frac{1}{2}}  \nonumber \\
		&\hspace*{+5mm} + c_3 \left( \sum_{n=1}^\infty n^{2s} \sup_{t>0} e^{2\alpha t} \left|
		\frac{1}{n} \sum\limits_{l=1}^\infty |a_{3,l}(t)| (l+n) |a_{2,l+n}(t)| \right|^2 \right)^{\frac{1}{2}} \nonumber \\
		&\le 3c_1 \delta R + \frac{R}{4}+6 c_2 c_3 R^2 + 3c_3 c_4
		\left( \sum\limits_{n=1}^\infty n^{2s} \sup\limits_{t>0} e^{2\alpha t} |a_{2,n}(t)|^2
		\right)^{\frac{1}{2}} \cdot \nonumber \\
		&\hspace*{+5mm} \left( \sum\limits_{n=1}^\infty n^{2s} \sup\limits_{t>0} e^{2\alpha t} |a_{3,n}(t)|^2
		\right)^{\frac{1}{2}} \nonumber \\
		&\le 3c_1 \delta R + \frac{R}{4} + 6 c_2 c_3 R^2 \nonumber \\
		&\hspace*{+5mm} + 3c_3 c_4
		\left[\sum\limits_{j=1}^3 c_2 \left( \sum\limits_{n=1}^\infty n^{2s} \sup\limits_{t>0} e^{2\alpha t} |A_{j,n}(t)|^2
		\right)^{\frac{1}{2}} \right] \cdot \nonumber \\
		&\hspace*{+5mm} \left[\sum\limits_{j=1}^3 c_2 \left( \sum\limits_{n=1}^\infty n^{2s} \sup\limits_{t>0} e^{2\alpha t}
		|A_{j,n}(t)|^2 \right)^{\frac{1}{2}} \right] \nonumber \\
		&\le 3c_1 \delta R + \frac{R}{4} + 6 c_2 c_3R^2 + 27 c_2^2 c_3 c_4R^2 .
	\end{align*}
	Hence, we have
	\begin{equation}\label{eqsm7}
	  |\Psi_1|_s \le R \quad\mbox{if } \delta \le \frac{1}{12c_1} \mbox{ and } R \le \frac{1}{12 c_2 c_3 + 54 c_2^2 c_3 c_4}
		\, .
	\end{equation}
	Next, we consider $\Psi_j$ for $j=2,3$. For $n \ge N_0$, due to \eqref{kontr2}, \eqref{kontr3}, \eqref{alpha}, and
	Lemma~\ref{calka}, we have
	\begin{align}\label{eqsm8}
	  \sup\limits_{t>0} e^{\alpha t} |\Psi_{j,n}(t)|
		&\le \sup\limits_{t>0} e^{\alpha t} \Big( e^{-\frac{\theta_\infty\mu^2}{2} t} |U_{j,n}(0)| \nonumber \\
		&\hspace*{+5mm} +\int_0^t e^{(-\frac{\theta_\infty\mu^2}{2}(t-\sigma)} |F_{j,n}(\sigma)|d\sigma \Big) \nonumber \\
		&\le |U_{j,n}(0)| + \sup\limits_{t>0} e^{\alpha t} \int_0^t e^{-\frac{\theta_\infty\mu^2}{2}(t-\sigma)}|F_{j,n}(\sigma)|
		d\sigma \nonumber \\
		&\le |U_{j,n}(0)| + \frac{1}{\frac{\theta_\infty\mu^2}{2}-\alpha} \, \sup\limits_{t>0} e^{\alpha t} |F_{j,n}(t)| \nonumber \\
		&\le |U_{j,n}(0)| + \frac{1}{\frac{\theta_\infty\mu^2}{2}- \frac{\theta_\infty\mu^2}{4}} \, \sup\limits_{t>0} e^{\alpha t}
		|F_{j,n}(t)| \nonumber \\
		&\le |U_{j,n}(0)| + \frac{4}{\theta_\infty\mu^2} \, \sup\limits_{t>0} e^{\alpha t} |F_{j,n}(t)|, \qquad n \ge N_0.
  \end{align}
	In view of \eqref{eqsm2}, \eqref{F}, and \eqref{n0}, we get
	\begin{align}\label{eqsm9}
	  \frac{4}{\theta_\infty\mu^2} \, |F_{j,n}|
		&\le \frac{48 (1+\theta_\infty^2)(1+ \mu^4)}{\theta_\infty\mu n}(|A_{1,n}| + |A_{2,n}| + |A_{3,n}|) \nonumber \\
		&\hspace*{+5mm} + \frac{2(2+\theta_\infty \mu^2)}{\theta_\infty} \bigg( \frac{1}{n}\sum\limits_{k=1}^{n-1}
	  |a_{3,n-k}| k |a_{2,k}| + \frac{1}{n} \sum\limits_{l=1}^\infty |a_{3,l+n}| l |a_{2,l}|  \nonumber \\
	  & \hspace*{+5mm} + \frac{1}{n} \sum\limits_{l=1}^\infty |a_{3,l}| (l+n) |a_{2,l+n}| \bigg) \nonumber \\
	  & \hspace*{+5mm} + \frac{8(2+\theta_\infty \mu^2)}{\theta_\infty} \, |h-\theta_\infty| (|A_{1,n}| + |A_{2,n}| + |A_{3,n}|) \nonumber \\
		&\le \frac{1}{12}(|A_{1,n}| + |A_{2,n}| + |A_{3,n}|) \nonumber \\
		&\hspace*{+5mm} + \frac{2(2+\theta_\infty \mu^2)}{\theta_\infty} \bigg( \frac{1}{n}\sum\limits_{k=1}^{n-1}
	  |a_{3,n-k}| k |a_{2,k}| + \frac{1}{n} \sum\limits_{l=1}^\infty |a_{3,l+n}| l |a_{2,l}|  \nonumber \\
	  &\hspace*{+5mm} + \frac{1}{n} \sum\limits_{l=1}^\infty |a_{3,l}| (l+n) |a_{2,l+n}| \bigg) \nonumber \\
		&\hspace*{+5mm} + \frac{8(2+\theta_\infty \mu^2)}{\theta_\infty} \, R (|A_{1,n}| + |A_{2,n}| + |A_{3,n}|) .
	\end{align}
	Combining \eqref{eqsm8} and \eqref{eqsm9} for $n \ge N_0$ with \eqref{eqsm5} and \eqref{eqsm6} for $n<N_0$ and using the
	triangle inequality, $c_1 \ge 1$, and $c_2 \ge 2$, we have by \eqref{eqini}, Lemma~\ref{lem4.2}, and \eqref{eqsm2a},
	where $c_5:= \max\{
	\frac{2(2+\theta_\infty \mu^2)}{\theta_\infty}, \frac{c_1 \mu N_0}{2\alpha} \}$ and $c_4=c_4(s)$ again denotes the
	constant from Lemma~\ref{lem4.2},
	\begin{align*}
	  |\Psi_j|_s &= \left( \sum_{n=1}^\infty n^{2s} \sup_{t>0} e^{2\alpha t} |\Psi_{j,n}(t)|^2 \right)^{\frac{1}{2}} \\
		&\le \sum\limits_{j=1}^3 c_1 \left( \sum_{n=1}^\infty n^{2s} |U_{j,n}(0)|^2 \right)^{\frac{1}{2}} \\
		&\hspace*{+5mm} + \sum\limits_{j=1}^3 \frac{1}{12} \left( \sum_{n=1}^\infty n^{2s} \sup_{t>0} e^{2\alpha t} |A_{j,n}(t)|^2
		\right)^{\frac{1}{2}} \\
		&\hspace*{+5mm} + \sum\limits_{j=1}^3 2 c_2 c_5 R \left( \sum_{n=1}^\infty n^{2s} \sup_{t>0} e^{2\alpha t}
		|A_{j,n}(t)|^2 \right)^{\frac{1}{2}} \\
		&\hspace*{+5mm} + c_5\left( \sum_{n=1}^\infty n^{2s} \sup_{t>0} e^{2\alpha t} \left|
		\frac{1}{n}\sum\limits_{k=1}^{n-1} |a_{3,n-k}(t)| k |a_{2,k}(t)| \right|^2 \right)^{\frac{1}{2}} \\
		&\hspace*{+5mm} + c_5\left( \sum_{n=1}^\infty n^{2s} \sup_{t>0} e^{2\alpha t} \left|
		\frac{1}{n} \sum\limits_{l=1}^\infty |a_{3,l+n}(t)| l |a_{2,l}(t)| \right|^2 \right)^{\frac{1}{2}}  \\
		&\hspace*{+5mm} + c_5\left( \sum_{n=1}^\infty n^{2s} \sup_{t>0} e^{2\alpha t} \left|
		\frac{1}{n} \sum\limits_{l=1}^\infty |a_{3,l}(t)| (l+n) |a_{2,l+n}(t)| \right|^2 \right)^{\frac{1}{2}} \\
		&\le 3c_1 \delta R + \frac{R}{4} + 6 c_2 c_5 R^2 + 3 c_5 c_4
		\left( \sum\limits_{n=1}^\infty n^{2s} \sup\limits_{t>0} e^{2\alpha t} |a_{2,n}(t)|^2
		\right)^{\frac{1}{2}} \cdot \\
		&\hspace*{+5mm} \left( \sum\limits_{n=1}^\infty n^{2s} \sup\limits_{t>0} e^{2\alpha t} |a_{3,n}(t)|^2
		\right)^{\frac{1}{2}} \\
		&\le 3c_1 \delta R + \frac{R}{4} + 6 c_2 c_5 R^2 \\
		&\hspace*{+5mm} + 3 c_5 c_4
		\left[\sum\limits_{j=1}^3 c_2 \left( \sum\limits_{n=1}^\infty n^{2s} \sup\limits_{t>0} e^{2\alpha t} |A_{j,n}(t)|^2
		\right)^{\frac{1}{2}} \right] \cdot \\
		&\hspace*{+5mm} \left[\sum\limits_{j=1}^3 c_2 \left( \sum\limits_{n=1}^\infty n^{2s} \sup\limits_{t>0} e^{2\alpha t}
		|A_{j,n}(t)|^2 \right)^{\frac{1}{2}} \right] \\
		&\le 3c_1 \delta R + \frac{R}{4} + 6 c_2 c_5 R^2 + 27 c_2^2 c_5 c_4 R^2.
	\end{align*}
	Hence, we have
	\begin{equation}\label{eqsm10}
	  |\Psi_j|_s \le R \quad\mbox{for } j=2,3, \mbox{ if } \delta \le \frac{1}{12c_1} \mbox{ and } R \le \frac{1}{12 c_2 c_5
		+ 54 c_2^2 c_5 c_4}  \, .
	\end{equation}
	Finally, for $\xi$ we get due to \eqref{kontr4}, \eqref{eqini}, $s > \frac{1}{2}$, the Cauchy-Schwarz inequality in $l^2$,
	and \eqref{eqsm2a}
	\begin{align*}
	  \sup\limits_{t>0} |\xi(t)-\theta_\infty|
		&\le |\hat{\theta}_0(0)-\theta_\infty| + \frac{\mu}{2} \sup\limits_{t>0} \int_0^t \sum\limits_{l=1}^\infty |a_{3,l}(\sigma)| l
		|a_{2,l}(\sigma)| d\sigma \\
		&\le \delta R + \frac{\mu}{2} \sup\limits_{t>0} \int_0^t e^{-2\alpha \sigma} \sum\limits_{l=1}^\infty l^s
		e^{\alpha \sigma} |a_{3,l}(\sigma)| l^s e^{\alpha \sigma}	|a_{2,l}(\sigma)| d\sigma \\
		&\le \delta R + \frac{\mu}{2} \int_0^\infty e^{-2\alpha \sigma} d\sigma \; \cdot \\
		&\hspace*{+5mm} \sum\limits_{l=1}^\infty \Big( l^s\sup\limits_{t>0} e^{\alpha t} |a_{3,l}(t)| \Big) \Big (l^s \sup\limits_{t>0}
		e^{\alpha t}	|a_{2,l}(t)| \Big) \\
		&\le \delta R + \frac{\mu}{4\alpha} \left( \sum\limits_{n=1}^\infty n^{2s} \sup\limits_{t>0} e^{2\alpha t} |a_{3,n}(t)|^2
		\right)^{\frac{1}{2}} \cdot \\
		&\hspace*{+5mm} \left( \sum\limits_{n=1}^\infty n^{2s} \sup\limits_{t>0} e^{2\alpha t} |a_{2,n}(t)|^2
  	\right)^{\frac{1}{2}}  \\
		&\le \delta R + \frac{\mu}{4\alpha}
		\left[\sum\limits_{j=1}^3 c_2 \left( \sum\limits_{n=1}^\infty n^{2s} \sup\limits_{t>0} e^{2\alpha t} |A_{j,n}(t)|^2
		\right)^{\frac{1}{2}} \right] \cdot \\
		&\hspace*{+5mm} \left[\sum\limits_{j=1}^3 c_2 \left( \sum\limits_{n=1}^\infty n^{2s} \sup\limits_{t>0} e^{2\alpha t}
		|A_{j,n}(t)|^2 \right)^{\frac{1}{2}} \right]	\\
		&\le \delta R + \frac{9\mu c_2^2}{4\alpha} \, R^2 .
	\end{align*}
	Hence, we have
	\begin{equation}\label{eqsm11}
	  \sup\limits_{t>0} |\xi(t)-\theta_\infty|  \le R \quad\mbox{if } \delta \le \frac{1}{4} \mbox{ and } R \le
		\frac{\alpha}{3\mu c_2^2} \, .
	\end{equation}
	Combining this with \eqref{eqsm7}, \eqref{eqsm10}, and \eqref{pX}, we conclude that
	\begin{equation}\label{eqsm12}
	  \begin{split}
		  & \|(\Psi_1, \Psi_2, \Psi_3, \xi-\theta_\infty)\|_X \le R \quad\mbox{if } \delta \le \delta_0:= \min \left\{ \frac{1}{12c_1},
			\frac{1}{4}\right\} \mbox{ and } \\
			&R \le \min\left\{ \frac{1}{12 c_2 c_3 + 54 c_2^2 c_3 c_4}, \frac{1}{12 c_2 c_5 + 54 c_2^2 c_5 c_4},
			\frac{\alpha}{3\mu c_2^2} \right\} .
		\end{split}
	\end{equation}
	Hence, $\mathcal{F}$ is a self mapping of $\overline{B((0,0,0,\theta_\infty),R)}$ with this choice of $\delta$ and $R$.
	
	\textbf{Step 2: Contraction} \\
	Let $\mathcal{A}^{(k)} := (A_1^{(k)}, A_2^{(k)}, A_3^{(k)}, h^{(k)}) \in \overline{B((0,0,0,\theta_\infty),R)}$ for $k=1,2$. In the sequel we use
	the upper index $(k)$ in many quantities to indicate their relation to $\mathcal{A}^{(k)}$. In order to prove that
	$\mathcal{F}$ is a contraction, we first consider $\Psi_1$. For $n \ge N_0$, in view of \eqref{n0} and \eqref{alpha}, we have
	$-n^2 + \theta_\infty \mu^2 \le -\frac{n^2}{2}$ and $\frac{n^2}{2}-\alpha \ge n >0$ as shown before \eqref{eqsm1}. Thus, due to
	\eqref{kontr1} and Lemma~\ref{calka}, we have
	\begin{align}\label{eqco1}
	  &\hspace*{-5mm} \sup\limits_{t>0} e^{\alpha t} |(\Psi^{(1)}_{1,n} - \Psi^{(2)}_{1,n})(t)|  \nonumber \\
		&\le \sup\limits_{t>0} e^{\alpha t} \int_0^t e^{(-n^2+\theta_\infty \mu^2)(t-\sigma)}|(F^{(1)}_{1,n} - F^{(2)}_{1,n})(\sigma)|
		d\sigma \nonumber \\
		&\le \sup\limits_{t>0} e^{\alpha t} \int_0^t e^{-\frac{n^2}{2}(t-\sigma)}|(F^{(1)}_{1,n} - F^{(2)}_{1,n})(\sigma)|
		d\sigma \nonumber \\
		&\le \frac{1}{\frac{n^2}{2}-\alpha} \, \sup\limits_{t>0} e^{\alpha t} |(F^{(1)}_{1,n} - F^{(2)}_{1,n})(t)| \nonumber \\
		&\le \frac{1}{n} \, \sup\limits_{t>0} e^{\alpha t} |(F^{(1)}_{1,n} - F^{(2)}_{1,n})(t)|, \qquad n \ge N_0.
  \end{align}
	In view of $(a^{(1)}_{1,n}- a^{(2)}_{1,n}, a^{(1)}_{2,n}- a^{(2)}_{2,n}, a^{(1)}_{3,n}- a^{(2)}_{3,n})^T= B_n^{-1}
	(A^{(1)}_{1,n}- A^{(2)}_{1,n}, A^{(1)}_{2,n}- A^{(2)}_{2,n}, A^{(1)}_{3,n}- A^{(2)}_{3,n})^T$ and $B_n^{-1} = (C_n^{-1})^T$,
	\eqref{n0} implies $|(B_n^{-1})_{k,l}| \le 2$ and
	\begin{equation}\label{eqco2}
	  |a^{(1)}_{j,n}- a^{(2)}_{j,n}| \le 2 \sum\limits_{l=1}^3 |A^{(1)}_{l,n}- A^{(2)}_{l,n}|, \qquad n \ge N_0, \, j=1,2,3 .
	\end{equation}
	Inserting this into \eqref{F} and using \eqref{n0} and \eqref{eqsm2}, we get
	\begin{align}\label{eqco3}
	  &\hspace*{-5mm} \frac{1}{n} \, |F^{(1)}_{1,n} - F^{(2)}_{1,n}| \nonumber \\
		&\le \frac{6 (1+\theta_\infty^2)(1+ \mu^4)}{n}\sum\limits_{l=1}^3 |A^{(1)}_{l,n}- A^{(2)}_{l,n}| \nonumber \\
		&\hspace*{+5mm} + \frac{\mu(1+\theta_\infty \mu^2)}{2} \bigg( \frac{1}{n}\sum\limits_{k=1}^{n-1}
	  |a^{(1)}_{3,n-k}-a^{(2)}_{3,n-k}| k |a^{(1)}_{2,k}| \nonumber \\
		&\hspace*{+5mm} + \frac{1}{n}\sum\limits_{k=1}^{n-1} |a^{(2)}_{3,n-k}| k |a^{(1)}_{2,k}-a^{(2)}_{2,k}|
		+ \frac{1}{n} \sum\limits_{l=1}^\infty |a^{(1)}_{3,l+n}-a^{(2)}_{3,l+n}| l |a^{(1)}_{2,l}|  \nonumber \\
	  & \hspace*{+5mm} + \frac{1}{n} \sum\limits_{l=1}^\infty |a^{(2)}_{3,l+n}| l |a^{(1)}_{2,l}-a^{(2)}_{2,l}|
		+ \frac{1}{n} \sum\limits_{l=1}^\infty |a^{(1)}_{3,l}-a^{(2)}_{3,l}| (l+n) |a^{(1)}_{2,l+n}| \nonumber \\
		&\hspace*{+5mm} + \frac{1}{n} \sum\limits_{l=1}^\infty |a^{(2)}_{3,l}| (l+n) |a^{(1)}_{2,l+n}-a^{(2)}_{2,l+n}|
		\bigg)\nonumber \\
	  & \hspace*{+5mm} + 2\mu (1+\theta_\infty \mu^2) \Big( |h^{(1)}-h^{(2)}| (|A^{(1)}_{1,n}| + |A^{(1)}_{2,n}| + |A^{(1)}_{3,n}|)
		\nonumber \\
	  & \hspace*{+5mm} + |h^{(2)}-\theta_\infty| \sum\limits_{l=1}^3 |A^{(1)}_{l,n}- A^{(2)}_{l,n}| \Big)\nonumber \\
		&\le \frac{1}{12} \sum\limits_{l=1}^3 |A^{(1)}_{l,n}- A^{(2)}_{l,n}| \nonumber \\
		&\hspace*{+5mm} + \frac{\mu(1+\theta_\infty \mu^2)}{2} \bigg( \frac{1}{n}\sum\limits_{k=1}^{n-1}
	  |a^{(1)}_{3,n-k}-a^{(2)}_{3,n-k}| k |a^{(1)}_{2,k}| \nonumber \\
		&\hspace*{+5mm} + \frac{1}{n}\sum\limits_{k=1}^{n-1} |a^{(2)}_{3,n-k}| k |a^{(1)}_{2,k}-a^{(2)}_{2,k}|
		+ \frac{1}{n} \sum\limits_{l=1}^\infty |a^{(1)}_{3,l+n}-a^{(2)}_{3,l+n}| l |a^{(1)}_{2,l}|  \nonumber \\
	  & \hspace*{+5mm} + \frac{1}{n} \sum\limits_{l=1}^\infty |a^{(2)}_{3,l+n}| l |a^{(1)}_{2,l}-a^{(2)}_{2,l}|
		+ \frac{1}{n} \sum\limits_{l=1}^\infty |a^{(1)}_{3,l}-a^{(2)}_{3,l}| (l+n) |a^{(1)}_{2,l+n}| \nonumber \\
		&\hspace*{+5mm} + \frac{1}{n} \sum\limits_{l=1}^\infty |a^{(2)}_{3,l}| (l+n) |a^{(1)}_{2,l+n}-a^{(2)}_{2,l+n}|
		\bigg)\nonumber \\
	  & \hspace*{+5mm} + 2\mu (1+\theta_\infty \mu^2) \Big( |h^{(1)}-h^{(2)}| (|A^{(1)}_{1,n}| + |A^{(1)}_{2,n}| + |A^{(1)}_{3,n}|)
		\nonumber \\
	  & \hspace*{+5mm} + R \sum\limits_{l=1}^3 |A^{(1)}_{l,n}- A^{(2)}_{l,n}| \Big) .
	\end{align}
	Next, let $n < N_0$. Then \eqref{kontr1a}, \eqref{eqsm4}, and Lemma~\ref{calka} imply for $j = 1,2,3$ and $n < N_0$
	\begin{align}\label{eqco4}
	  &\hspace*{-5mm} \sup\limits_{t>0} e^{\alpha t} |(\Psi^{(1)}_{j,n} - \Psi^{(2)}_{j,n})(t)| \nonumber \\
		&\le c_1 \sup\limits_{t>0} e^{\alpha t} \int_0^t e^{-2\alpha(t-\sigma)}|(g^{(1)}_{3,n}-g^{(2)}_{3,n})(\sigma)|d\sigma
		\nonumber \\
		&\le \frac{c_1}{2\alpha -\alpha} \, \sup\limits_{t>0} e^{\alpha t} |(g^{(1)}_{3,n}-g^{(2)}_{3,n})(t)| \nonumber \\
		&= \frac{c_1}{\alpha} \, \sup\limits_{t>0} e^{\alpha t} |(g^{(1)}_{3,n}-g^{(2)}_{3,n})(t)|, \quad n < N_0 .
	\end{align}
	For any $n < N_0$ we have $(a^{(1)}_{1,n}- a^{(2)}_{1,n}, a^{(1)}_{2,n}- a^{(2)}_{2,n}, a^{(1)}_{3,n}- a^{(2)}_{3,n})^T
	= B_n^{-1}(A^{(1)}_{1,n}- A^{(2)}_{1,n}, A^{(1)}_{2,n}- A^{(2)}_{2,n}, A^{(1)}_{3,n}- A^{(2)}_{3,n})^T$. Since we have
	$|(B_n^{-1})_{k,l}| \le c_2$ for all $n < N_0$ and all $k,l = 1,2,3$ (see before \eqref{eqsm2a}) and $c_2 \ge 2$,
	we obtain from \eqref{eqco2}
	\begin{equation}\label{eqco2a}
	  |a^{(1)}_{j,n}- a^{(2)}_{j,n}| \le c_2 \sum\limits_{l=1}^3 |A^{(1)}_{l,n}- A^{(2)}_{l,n}|, \qquad n \in \N, \, j=1,2,3 .
	\end{equation}
	By \eqref{g}, \eqref{eqsm2a}, and \eqref{eqco2a}, we further have
	\begin{align}\label{eqco5}
	  &\hspace*{-5mm} \frac{c_1}{\alpha} \, |g^{(1)}_{3,n}-g^{(2)}_{3,n}| \nonumber \\
		&\le \frac{c_1 \mu N_0}{2\alpha} \bigg( \frac{1}{n}\sum\limits_{k=1}^{n-1}
	  |a^{(1)}_{3,n-k}-a^{(2)}_{3,n-k}| k |a^{(1)}_{2,k}| \nonumber \\
		&\hspace*{+5mm} + \frac{1}{n}\sum\limits_{k=1}^{n-1} |a^{(2)}_{3,n-k}| k |a^{(1)}_{2,k}-a^{(2)}_{2,k}|
		+ \frac{1}{n} \sum\limits_{l=1}^\infty |a^{(1)}_{3,l+n}-a^{(2)}_{3,l+n}| l |a^{(1)}_{2,l}|  \nonumber \\
	  & \hspace*{+5mm} + \frac{1}{n} \sum\limits_{l=1}^\infty |a^{(2)}_{3,l+n}| l |a^{(1)}_{2,l}-a^{(2)}_{2,l}|
		+ \frac{1}{n} \sum\limits_{l=1}^\infty |a^{(1)}_{3,l}-a^{(2)}_{3,l}| (l+n) |a^{(1)}_{2,l+n}| \nonumber \\
		&\hspace*{+5mm} + \frac{1}{n} \sum\limits_{l=1}^\infty |a^{(2)}_{3,l}| (l+n) |a^{(1)}_{2,l+n}-a^{(2)}_{2,l+n}|
		\bigg)\nonumber \\
	  & \hspace*{+5mm} + \frac{c_1 c_2\mu N_0}{\alpha}\Big( |h^{(1)}-h^{(2)}| (|A^{(1)}_{1,n}| + |A^{(1)}_{2,n}| + |A^{(1)}_{3,n}|)
		\nonumber \\
	  & \hspace*{+5mm} + R \sum\limits_{l=1}^3 |A^{(1)}_{l,n}- A^{(2)}_{l,n}| \Big) , \quad n < N_0 .
	\end{align}
	Combining \eqref{eqco1} and \eqref{eqco3} for $n \ge N_0$ with \eqref{eqco4} and \eqref{eqco5} for $n<N_0$ and using the
	triangle inequality, $c_1 \ge 1$, and $c_2 \ge 2$, we have by Lemma~\ref{lem4.2}, \eqref{eqsm2a}, and \eqref{eqco2a}, where again
	$c_3 := \max\{\frac{\mu(1+\theta_\infty \mu^2)}{2}, \frac{c_1 \mu N_0}{2\alpha} \}$ and $c_4=c_4(s)$ denotes the constant from
	Lemma~\ref{lem4.2},
	\begin{align*}
	  &\hspace*{-5mm} |\Psi^{(1)}_1- \Psi^{(2)}_1|_s \nonumber \\
		&= \left( \sum_{n=1}^\infty n^{2s} \sup_{t>0} e^{2\alpha t} |(\Psi^{(1)}_{1,n} - \Psi^{(2)}_{1,n})(t)|^2 \right)^{\frac{1}{2}}
		\nonumber \\
		&\le \sum\limits_{j=1}^3 \frac{1}{12} \left( \sum_{n=1}^\infty n^{2s} \sup_{t>0} e^{2\alpha t} |(A^{(1)}_{j,n}-
		A^{(2)}_{j,n})(t)|^2 \right)^{\frac{1}{2}} \nonumber \\
		&\hspace*{+5mm} + \sum\limits_{j=1}^3  2 c_2 c_3 R \left( \sum_{n=1}^\infty n^{2s} \sup_{t>0} e^{2\alpha t}
		|(A^{(1)}_{j,n}-A^{(2)}_{j,n})(t)|^2 \right)^{\frac{1}{2}} \nonumber \\
		&\hspace*{+5mm} + \sum\limits_{j=1}^3  2 c_2 c_3 \Big(\sup_{t>0}|(h^{(1)}-h^{(2)})(t)| \Big)
		\left( \sum_{n=1}^\infty n^{2s} \sup_{t>0} e^{2\alpha t} |A^{(1)}_{j,n}(t)|^2 \right)^{\frac{1}{2}} \nonumber \\
		&\hspace*{+5mm} + c_3 \left( \sum_{n=1}^\infty n^{2s} \sup_{t>0} e^{2\alpha t} \left|
		\frac{1}{n}\sum\limits_{k=1}^{n-1} |a^{(1)}_{3,n-k}-a^{(2)}_{3,n-k}| k |a^{(1)}_{2,k}| \right|^2 \right)^{\frac{1}{2}}
		\nonumber \\
		&\hspace*{+5mm} + c_3 \left( \sum_{n=1}^\infty n^{2s} \sup_{t>0} e^{2\alpha t} \left|
		\frac{1}{n}\sum\limits_{k=1}^{n-1} |a^{(2)}_{3,n-k}| k |a^{(1)}_{2,k}-a^{(2)}_{2,k}| \right|^2 \right)^{\frac{1}{2}}
		\nonumber \\
		&\hspace*{+5mm} + c_3 \left( \sum_{n=1}^\infty n^{2s} \sup_{t>0} e^{2\alpha t} \left|
		\frac{1}{n} \sum\limits_{l=1}^\infty |a^{(1)}_{3,l+n}-a^{(2)}_{3,l+n}| l |a^{(1)}_{2,l}| \right|^2 \right)^{\frac{1}{2}}
		\nonumber \\
		&\hspace*{+5mm} + c_3 \left( \sum_{n=1}^\infty n^{2s} \sup_{t>0} e^{2\alpha t} \left|
		\frac{1}{n} \sum\limits_{l=1}^\infty |a^{(2)}_{3,l+n}| l |a^{(1)}_{2,l}-a^{(2)}_{2,l}|  \right|^2 \right)^{\frac{1}{2}}
		\nonumber \\
		&\hspace*{+5mm} + c_3 \left( \sum_{n=1}^\infty n^{2s} \sup_{t>0} e^{2\alpha t} \left|
		\frac{1}{n} \sum\limits_{l=1}^\infty |a^{(1)}_{3,l}-a^{(2)}_{3,l}| (l+n) |a^{(1)}_{2,l+n}| \right|^2 \right)^{\frac{1}{2}}
		\nonumber \\
		&\hspace*{+5mm} + c_3 \left( \sum_{n=1}^\infty n^{2s} \sup_{t>0} e^{2\alpha t} \left|
		\frac{1}{n} \sum\limits_{l=1}^\infty |a^{(2)}_{3,l}| (l+n) |a^{(1)}_{2,l+n}-a^{(2)}_{2,l+n}| \right|^2 \right)^{\frac{1}{2}}
		\nonumber \\
		&\le \frac{1}{4} \, \|\mathcal{A}^{(1)} - \mathcal{A}^{2}\|_X + 6 c_2 c_3 R \|\mathcal{A}^{(1)} - \mathcal{A}^{2}\|_X
		+ 6 c_2 c_3 R \|\mathcal{A}^{(1)} - \mathcal{A}^{2}\|_X \nonumber \\
		&\hspace*{+5mm} + 3c_3 c_4
		\left( \sum\limits_{n=1}^\infty n^{2s} \sup\limits_{t>0} e^{2\alpha t} |a^{(1)}_{2,n}(t)|^2
		\right)^{\frac{1}{2}} \cdot \nonumber \\
		&\hspace*{+5mm} \left( \sum\limits_{n=1}^\infty n^{2s} \sup\limits_{t>0} e^{2\alpha t} |(a^{(1)}_{3,n}- a^{(2)}_{3,n})(t)|^2
		\right)^{\frac{1}{2}} \nonumber \\
		&\hspace*{+5mm} + 3c_3 c_4
		\left( \sum\limits_{n=1}^\infty n^{2s} \sup\limits_{t>0} e^{2\alpha t} |(a^{(1)}_{2,n}-a^{(2)}_{2,n})(t)|^2
		\right)^{\frac{1}{2}} \cdot \nonumber \\
		&\hspace*{+5mm} \left( \sum\limits_{n=1}^\infty n^{2s} \sup\limits_{t>0} e^{2\alpha t} |a^{(2)}_{3,n}(t)|^2
		\right)^{\frac{1}{2}} \nonumber \\
		&\le \left( \frac{1}{4} + 12 c_2 c_3 R \right) \|\mathcal{A}^{(1)} - \mathcal{A}^{2}\|_X \nonumber \\
		&\hspace*{+5mm} + 3 c_3 c_4
		\left[\sum\limits_{j=1}^3 c_2 \left( \sum\limits_{n=1}^\infty n^{2s} \sup\limits_{t>0} e^{2\alpha t} |A^{(1)}_{j,n}(t)|^2
		\right)^{\frac{1}{2}} \right] \cdot \nonumber \\
		&\hspace*{+5mm} \left[\sum\limits_{j=1}^3 c_2 \left( \sum\limits_{n=1}^\infty n^{2s} \sup\limits_{t>0} e^{2\alpha t}
		|(A^{(1)}_{j,n}- A^{(2)}_{j,n})(t)|^2 \right)^{\frac{1}{2}} \right] \nonumber \\
		&\hspace*{+5mm} + 3 c_3 c_4
		\left[\sum\limits_{j=1}^3 c_2 \left( \sum\limits_{n=1}^\infty n^{2s} \sup\limits_{t>0} e^{2\alpha t} |A^{(2)}_{j,n}(t)|^2
		\right)^{\frac{1}{2}} \right] \cdot \nonumber \\
		&\hspace*{+5mm} \left[\sum\limits_{j=1}^3 c_2 \left( \sum\limits_{n=1}^\infty n^{2s} \sup\limits_{t>0} e^{2\alpha t}
		|(A^{(1)}_{j,n}- A^{(2)}_{j,n})(t)|^2 \right)^{\frac{1}{2}} \right] \nonumber \\
		&\le \left( \frac{1}{4} + 12 c_2 c_3 R + 54 c_2^2 c_3 c_4 R\right) \|\mathcal{A}^{(1)} - \mathcal{A}^{2}\|_X .
	\end{align*}
	Hence, we have
	\begin{equation}\label{eqco6}
	  |\Psi^{(1)}_1- \Psi^{(2)}_1|_s \le \frac{1}{2} \|\mathcal{A}^{(1)} - \mathcal{A}^{2}\|_X
		\quad\mbox{if } R \le \frac{1}{4(12 c_2 c_3 + 54 c_2^2 c_3 c_4)}  \, .
	\end{equation}
	Next, we consider $\Psi_j$ for $j=2,3$. For $n \ge N_0$, due to \eqref{kontr2}, \eqref{kontr3}, \eqref{alpha}, and
	Lemma~\ref{calka}, we have
	\begin{align}\label{eqco7}
	  &\hspace*{-5mm}
		\sup\limits_{t>0} e^{\alpha t} |(\Psi^{(1)}_{j,n} - \Psi^{(2)}_{j,n})(t)| \nonumber \\
		&\le \sup\limits_{t>0} e^{\alpha t} \int_0^t e^{(-\frac{\theta_\infty\mu^2}{2}(t-\sigma)} |(F^{(1)}_{j,n} - F^{(2)}_{j,n})
		(\sigma)|d\sigma \nonumber \\
		&\le \frac{1}{\frac{\theta_\infty\mu^2}{2}-\alpha} \, \sup\limits_{t>0} e^{\alpha t} |(F^{(1)}_{j,n} - F^{(2)}_{j,n})(t)|
		\nonumber \\
		&\le \frac{1}{\frac{\theta_\infty\mu^2}{2}- \frac{\theta_\infty\mu^2}{4}} \, \sup\limits_{t>0} e^{\alpha t}
		|(F^{(1)}_{j,n} - F^{(2)}_{j,n})(t)| \nonumber \\
		&\le \frac{4}{\theta_\infty\mu^2} \, \sup\limits_{t>0} e^{\alpha t} |(F^{(1)}_{j,n} - F^{(2)}_{j,n})(t)|, \qquad n \ge N_0.
  \end{align}
	In view of \eqref{eqsm2}, \eqref{eqco2}, \eqref{F}, and \eqref{n0}, we get
	\begin{align}\label{eqco8}
	  &\hspace*{-5mm} \frac{4}{\theta_\infty\mu^2} \, |F^{(1)}_{j,n} - F^{(2)}_{j,n}| \nonumber \\
		&\le \frac{48 (1+\theta_\infty^2)(1+ \mu^4)}{\theta_\infty\mu n} \sum\limits_{l=1}^3 |A^{(1)}_{l,n}- A^{(2)}_{l,n}|
		\nonumber \\
		&\hspace*{+5mm} + \frac{2(2+\theta_\infty \mu^2)}{\theta_\infty} \bigg( \frac{1}{n}\sum\limits_{k=1}^{n-1}
	  |a^{(1)}_{3,n-k}-a^{(2)}_{3,n-k}| k |a^{(1)}_{2,k}| \nonumber \\
		&\hspace*{+5mm} + \frac{1}{n}\sum\limits_{k=1}^{n-1} |a^{(2)}_{3,n-k}| k |a^{(1)}_{2,k}-a^{(2)}_{2,k}|
		+ \frac{1}{n} \sum\limits_{l=1}^\infty |a^{(1)}_{3,l+n}-a^{(2)}_{3,l+n}| l |a^{(1)}_{2,l}|  \nonumber \\
	  & \hspace*{+5mm} + \frac{1}{n} \sum\limits_{l=1}^\infty |a^{(2)}_{3,l+n}| l |a^{(1)}_{2,l}-a^{(2)}_{2,l}|
		+ \frac{1}{n} \sum\limits_{l=1}^\infty |a^{(1)}_{3,l}-a^{(2)}_{3,l}| (l+n) |a^{(1)}_{2,l+n}| \nonumber \\
		&\hspace*{+5mm} + \frac{1}{n} \sum\limits_{l=1}^\infty |a^{(2)}_{3,l}| (l+n) |a^{(1)}_{2,l+n}-a^{(2)}_{2,l+n}|
		\bigg)\nonumber \\
	  & \hspace*{+5mm} + \frac{8(2+\theta_\infty \mu^2)}{\theta_\infty}  \Big( |h^{(1)}-h^{(2)}| (|A^{(1)}_{1,n}|
		+ |A^{(1)}_{2,n}| + |A^{(1)}_{3,n}|)
		\nonumber \\
	  & \hspace*{+5mm} + |h^{(2)}-\theta_\infty| \sum\limits_{l=1}^3 |A^{(1)}_{l,n}- A^{(2)}_{l,n}| \Big)\nonumber \\
		&\le \frac{1}{12} \sum\limits_{l=1}^3 |A^{(1)}_{l,n}- A^{(2)}_{l,n}| \nonumber \\
		&\hspace*{+5mm} + \frac{2(2+\theta_\infty \mu^2)}{\theta_\infty} \bigg( \frac{1}{n}\sum\limits_{k=1}^{n-1}
	  |a^{(1)}_{3,n-k}-a^{(2)}_{3,n-k}| k |a^{(1)}_{2,k}| \nonumber \\
		&\hspace*{+5mm} + \frac{1}{n}\sum\limits_{k=1}^{n-1} |a^{(2)}_{3,n-k}| k |a^{(1)}_{2,k}-a^{(2)}_{2,k}|
		+ \frac{1}{n} \sum\limits_{l=1}^\infty |a^{(1)}_{3,l+n}-a^{(2)}_{3,l+n}| l |a^{(1)}_{2,l}|  \nonumber \\
	  & \hspace*{+5mm} + \frac{1}{n} \sum\limits_{l=1}^\infty |a^{(2)}_{3,l+n}| l |a^{(1)}_{2,l}-a^{(2)}_{2,l}|
		+ \frac{1}{n} \sum\limits_{l=1}^\infty |a^{(1)}_{3,l}-a^{(2)}_{3,l}| (l+n) |a^{(1)}_{2,l+n}| \nonumber \\
		&\hspace*{+5mm} + \frac{1}{n} \sum\limits_{l=1}^\infty |a^{(2)}_{3,l}| (l+n) |a^{(1)}_{2,l+n}-a^{(2)}_{2,l+n}|
		\bigg)\nonumber \\
	  & \hspace*{+5mm} + \frac{8(2+\theta_\infty \mu^2)}{\theta_\infty} \Big( |h^{(1)}-h^{(2)}| (|A^{(1)}_{1,n}|
		+ |A^{(1)}_{2,n}| + |A^{(1)}_{3,n}|)
		\nonumber \\
	  & \hspace*{+5mm} + R \sum\limits_{l=1}^3 |A^{(1)}_{l,n}- A^{(2)}_{l,n}| \Big) .
	\end{align}
	Combining \eqref{eqco7} and \eqref{eqco8} for $n \ge N_0$ with \eqref{eqco4} and \eqref{eqco5} for $n<N_0$ and using the
	triangle inequality, $c_1 \ge 1$, and $c_2 \ge 2$, we have by Lemma~\ref{lem4.2}, \eqref{eqsm2a}, and \eqref{eqco2a}, where again $c_5 := \max\{
	\frac{2(2+\theta_\infty \mu^2)}{\theta_\infty}, \frac{c_1 \mu N_0}{2\alpha} \}$ and $c_4=c_4(s)$ denotes the constant from
	Lemma~\ref{lem4.2},
	\begin{align*}
	  &\hspace*{-5mm} |\Psi^{(1)}_j- \Psi^{(2)}_j|_s \nonumber \\
		&= \left( \sum_{n=1}^\infty n^{2s} \sup_{t>0} e^{2\alpha t} |(\Psi^{(1)}_{j,n} - \Psi^{(2)}_{j,n})(t)|^2 \right)^{\frac{1}{2}}
		\nonumber \\
		&\le \sum\limits_{j=1}^3 \frac{1}{12} \left( \sum_{n=1}^\infty n^{2s} \sup_{t>0} e^{2\alpha t} |(A^{(1)}_{j,n}-
		A^{(2)}_{j,n})(t)|^2 \right)^{\frac{1}{2}} \nonumber \\
		&\hspace*{+5mm} + \sum\limits_{j=1}^3  2 c_2 c_5 R \left( \sum_{n=1}^\infty n^{2s} \sup_{t>0} e^{2\alpha t}
		|(A^{(1)}_{j,n}-A^{(2)}_{j,n})(t)|^2 \right)^{\frac{1}{2}} \nonumber \\
		&\hspace*{+5mm} + \sum\limits_{j=1}^3  2 c_2 c_5 \Big(\sup_{t>0}|(h^{(1)}-h^{(2)})(t)| \Big)
		\left( \sum_{n=1}^\infty n^{2s} \sup_{t>0} e^{2\alpha t} |A^{(1)}_{j,n}(t)|^2 \right)^{\frac{1}{2}} \nonumber \\
		&\hspace*{+5mm} + c_5 \left( \sum_{n=1}^\infty n^{2s} \sup_{t>0} e^{2\alpha t} \left|
		\frac{1}{n}\sum\limits_{k=1}^{n-1} |a^{(1)}_{3,n-k}-a^{(2)}_{3,n-k}| k |a^{(1)}_{2,k}| \right|^2 \right)^{\frac{1}{2}}
		\nonumber \\
		&\hspace*{+5mm} + c_5 \left( \sum_{n=1}^\infty n^{2s} \sup_{t>0} e^{2\alpha t} \left|
		\frac{1}{n}\sum\limits_{k=1}^{n-1} |a^{(2)}_{3,n-k}| k |a^{(1)}_{2,k}-a^{(2)}_{2,k}| \right|^2 \right)^{\frac{1}{2}}
		\nonumber \\
		&\hspace*{+5mm} + c_5 \left( \sum_{n=1}^\infty n^{2s} \sup_{t>0} e^{2\alpha t} \left|
		\frac{1}{n} \sum\limits_{l=1}^\infty |a^{(1)}_{3,l+n}-a^{(2)}_{3,l+n}| l |a^{(1)}_{2,l}| \right|^2 \right)^{\frac{1}{2}}
		\nonumber \\
		&\hspace*{+5mm} + c_5 \left( \sum_{n=1}^\infty n^{2s} \sup_{t>0} e^{2\alpha t} \left|
		\frac{1}{n} \sum\limits_{l=1}^\infty |a^{(2)}_{3,l+n}| l |a^{(1)}_{2,l}-a^{(2)}_{2,l}|  \right|^2 \right)^{\frac{1}{2}}
		\nonumber \\
		&\hspace*{+5mm} + c_5 \left( \sum_{n=1}^\infty n^{2s} \sup_{t>0} e^{2\alpha t} \left|
		\frac{1}{n} \sum\limits_{l=1}^\infty |a^{(1)}_{3,l}-a^{(2)}_{3,l}| (l+n) |a^{(1)}_{2,l+n}| \right|^2 \right)^{\frac{1}{2}}
		\nonumber \\
		&\hspace*{+5mm} + c_5 \left( \sum_{n=1}^\infty n^{2s} \sup_{t>0} e^{2\alpha t} \left|
		\frac{1}{n} \sum\limits_{l=1}^\infty |a^{(2)}_{3,l}| (l+n) |a^{(1)}_{2,l+n}-a^{(2)}_{2,l+n}| \right|^2 \right)^{\frac{1}{2}}
		\nonumber \\
		&\le \frac{1}{4} \, \|\mathcal{A}^{(1)} - \mathcal{A}^{2}\|_X + 6 c_2 c_5 R \|\mathcal{A}^{(1)} - \mathcal{A}^{2}\|_X
		+ 6 c_2 c_5 R \|\mathcal{A}^{(1)} - \mathcal{A}^{2}\|_X \nonumber \\
		&\hspace*{+5mm} + 3c_5 c_4
		\left( \sum\limits_{n=1}^\infty n^{2s} \sup\limits_{t>0} e^{2\alpha t} |a^{(1)}_{2,n}(t)|^2
		\right)^{\frac{1}{2}} \cdot \nonumber \\
		&\hspace*{+5mm} \left( \sum\limits_{n=1}^\infty n^{2s} \sup\limits_{t>0} e^{2\alpha t} |(a^{(1)}_{3,n}- a^{(2)}_{3,n})(t)|^2
		\right)^{\frac{1}{2}} \nonumber \\
		&\hspace*{+5mm} + 3c_5 c_4
		\left( \sum\limits_{n=1}^\infty n^{2s} \sup\limits_{t>0} e^{2\alpha t} |(a^{(1)}_{2,n}-a^{(2)}_{2,n})(t)|^2
		\right)^{\frac{1}{2}} \cdot \nonumber \\
		&\hspace*{+5mm} \left( \sum\limits_{n=1}^\infty n^{2s} \sup\limits_{t>0} e^{2\alpha t} |a^{(2)}_{3,n}(t)|^2
		\right)^{\frac{1}{2}} \nonumber \\
		&\le \left( \frac{1}{4} + 12 c_2 c_5 R \right) \|\mathcal{A}^{(1)} - \mathcal{A}^{2}\|_X \nonumber \\
		&\hspace*{+5mm} + 3c_5 c_4
		\left[\sum\limits_{j=1}^3 c_2 \left( \sum\limits_{n=1}^\infty n^{2s} \sup\limits_{t>0} e^{2\alpha t} |A^{(1)}_{j,n}(t)|^2
		\right)^{\frac{1}{2}} \right] \cdot \nonumber \\
		&\hspace*{+5mm} \left[\sum\limits_{j=1}^3 c_2 \left( \sum\limits_{n=1}^\infty n^{2s} \sup\limits_{t>0} e^{2\alpha t}
		|(A^{(1)}_{j,n}- A^{(2)}_{j,n})(t)|^2 \right)^{\frac{1}{2}} \right] \nonumber \\
		&\hspace*{+5mm} + 3c_5 c_4
		\left[\sum\limits_{j=1}^3 c_2 \left( \sum\limits_{n=1}^\infty n^{2s} \sup\limits_{t>0} e^{2\alpha t} |A^{(2)}_{j,n}(t)|^2
		\right)^{\frac{1}{2}} \right] \cdot \nonumber \\
		&\hspace*{+5mm} \left[\sum\limits_{j=1}^3 c_2 \left( \sum\limits_{n=1}^\infty n^{2s} \sup\limits_{t>0} e^{2\alpha t}
		|(A^{(1)}_{j,n}- A^{(2)}_{j,n})(t)|^2 \right)^{\frac{1}{2}} \right] \nonumber \\
		&\le \left( \frac{1}{4} + 12 c_2 c_5 R + 54 c_2^2 c_5 c_4 R\right) \|\mathcal{A}^{(1)} - \mathcal{A}^{2}\|_X .
	\end{align*}
	Hence, we have
	\begin{equation}\label{eqco9}
	  \begin{split}
	  & |\Psi^{(1)}_j- \Psi^{(2)}_j|_s \le \frac{1}{2} \|\mathcal{A}^{(1)} - \mathcal{A}^{2}\|_X
		\quad\quad\mbox{for } j=2,3, \\
		&\mbox{ if } R \le \frac{1}{4(12 c_2 c_5 + 54 c_2^2 c_5 c_4)}  \, .
		\end{split}
	\end{equation}
	Finally, for $\xi$ we get due to \eqref{kontr4}, $s > \frac{1}{2}$, the Cauchy-Schwarz inequality in $l^2$,
	\eqref{eqsm2a}, and \eqref{eqco2a} 
	\begin{align*}
	  &\hspace*{-5mm} \sup\limits_{t>0} |(\xi^{(1)}- \xi^{(2)})(t)| \\
		&\le \frac{\mu}{2} \sup\limits_{t>0} \int_0^t \sum\limits_{l=1}^\infty \Big( |(a^{(1)}_{3,l}-a^{(2)}_{3,l})(\sigma)| l
		|a^{(1)}_{2,l}(\sigma)| \\
		&\hspace*{+5mm} + |a^{(2)}_{3,l}(\sigma)| l |(a^{(1)}_{2,l}-a^{(2)}_{2,l})(\sigma)| \Big) d\sigma \\
		&\le \frac{\mu}{2} \sup\limits_{t>0} \int_0^t e^{-2\alpha \sigma} \sum\limits_{l=1}^\infty \Big( l^s
		e^{\alpha \sigma} |(a^{(1)}_{3,l}-a^{(2)}_{3,l})(\sigma)| l^s e^{\alpha \sigma}	|a^{(1)}_{2,l}(\sigma)| \\
		&\hspace*{+5mm} + l^s
		e^{\alpha \sigma} |a^{(2)}_{3,l}(\sigma)| l^s e^{\alpha \sigma}	|(a^{(1)}_{2,l}-a^{(2)}_{2,l})(\sigma)| \Big)d\sigma \\
		&\le \frac{\mu}{2} \int_0^\infty e^{-2\alpha \sigma} d\sigma \; \cdot \\
		&\hspace*{+5mm} \bigg[ \sum\limits_{l=1}^\infty \Big( l^s\sup\limits_{t>0} e^{\alpha t} |(a^{(1)}_{3,l}-a^{(2)}_{3,l})(t)|
		\Big) \Big (l^s \sup\limits_{t>0} e^{\alpha t}	|a^{(1)}_{2,l}(t)| \Big) \\
		&\hspace*{+5mm} +\sum\limits_{l=1}^\infty \Big( l^s\sup\limits_{t>0} e^{\alpha t} |a^{(2)}_{3,l}(t)|
		\Big) \Big (l^s \sup\limits_{t>0} e^{\alpha t}	|(a^{(1)}_{2,l}-a^{(2)}_{2,l})(t)| \Big) \bigg]\\
		&\le \frac{\mu}{4\alpha} \left( \sum\limits_{n=1}^\infty n^{2s} \sup\limits_{t>0} e^{2\alpha t} |a^{(1)}_{2,n}(t)|^2
		\right)^{\frac{1}{2}} \cdot  \\
		&\hspace*{+5mm} \left( \sum\limits_{n=1}^\infty n^{2s} \sup\limits_{t>0} e^{2\alpha t} |(a^{(1)}_{3,n}- a^{(2)}_{3,n})(t)|^2
		\right)^{\frac{1}{2}} \\
		&\hspace*{+5mm} +  \frac{\mu}{4\alpha}
		\left( \sum\limits_{n=1}^\infty n^{2s} \sup\limits_{t>0} e^{2\alpha t} |(a^{(1)}_{2,n}-a^{(2)}_{2,n})(t)|^2
		\right)^{\frac{1}{2}} \cdot \nonumber \\
		&\hspace*{+5mm} \left( \sum\limits_{n=1}^\infty n^{2s} \sup\limits_{t>0} e^{2\alpha t} |a^{(2)}_{3,n}(t)|^2
		\right)^{\frac{1}{2}} \\
		&\le \frac{\mu}{4\alpha} \left[\sum\limits_{j=1}^3 c_2 \left( \sum\limits_{n=1}^\infty n^{2s} \sup\limits_{t>0} e^{2\alpha t}
		|A^{(1)}_{j,n}(t)|^2 \right)^{\frac{1}{2}} \right] \cdot \\
		&\hspace*{+5mm} \left[\sum\limits_{j=1}^3 c_2 \left( \sum\limits_{n=1}^\infty n^{2s} \sup\limits_{t>0} e^{2\alpha t}
		|(A^{(1)}_{j,n}- A^{(2)}_{j,n})(t)|^2 \right)^{\frac{1}{2}} \right] \\
		&\hspace*{+5mm} +\frac{\mu}{4\alpha}
		\left[\sum\limits_{j=1}^3 c_2\left( \sum\limits_{n=1}^\infty n^{2s} \sup\limits_{t>0} e^{2\alpha t} |A^{(2)}_{j,n}(t)|^2
		\right)^{\frac{1}{2}} \right] \cdot  \\
		&\hspace*{+5mm} \left[\sum\limits_{j=1}^3 c_2 \left( \sum\limits_{n=1}^\infty n^{2s} \sup\limits_{t>0} e^{2\alpha t}
		|(A^{(1)}_{j,n}- A^{(2)}_{j,n})(t)|^2 \right)^{\frac{1}{2}} \right] \\
		&\le \frac{9\mu c_2^2}{2\alpha} \, R \, \|\mathcal{A}^{(1)} - \mathcal{A}^{2}\|_X .
	\end{align*}
	Hence, we have
	\begin{equation}\label{eqco10}
	  \sup\limits_{t>0} |(\xi^{(1)}- \xi^{(2)})(t)| \le \frac{1}{2} \|\mathcal{A}^{(1)} - \mathcal{A}^{2}\|_X
		\quad\mbox{if } R \le \frac{\alpha}{9\mu c_2^2} \, .
	\end{equation}
	Combining this with \eqref{eqco6}, \eqref{eqco9}, and \eqref{pX}, we conclude that
	\begin{equation}\label{eqco11}
	  \begin{split}
		  & \|(\Psi^{(1)}_1, \Psi^{(1)}_2, \Psi^{(1)}_3, \xi^{(1)})- (\Psi^{(2)}_1, \Psi^{(2)}_2, \Psi^{(2)}_3, \xi^{(2)})\|_X \le
			\frac{1}{2} \|\mathcal{A}^{(1)} - \mathcal{A}^{2}\|_X \\
			&\mbox{if } R \le R_0:= \min\left\{\frac{1}{4(12 c_2 c_3 + 54 c_2^2 c_3 c_4)}, \frac{1}{4(12 c_2 c_5
			+ 54 c_2^2 c_5 c_4)}, \frac{\alpha}{9\mu c_2^2}\right\} .
		\end{split}
	\end{equation}
	Hence, $\mathcal{F}$ is a contraction on $\overline{B((0,0,0,\theta_\infty),R)}$ with this choice of $R$. Combining this with \eqref{eqsm12} and
	choosing $\delta \in (0, \delta_0]$ and $R \in (0, R_0]$, $\mathcal{F}$ has a unique fixed
	point in $\overline{B((0,0,0,\theta_\infty),R)}$ by Banach's fixed point theorem.
\end{proof}

\section{Conclusions}\label{conlusion}
This section is devoted to the proof of Theorem \ref{main}. In the previous section we constructed a solution fixed point in a closed ball $\overline{B((0,0,0,\theta_\infty), R)}\subset X$ for $R$ small enough, where $X$ consists of vectors $(U_1, U_2, U_3, \hat{\theta}_0)$, where $U_j, j=1,2,3$, are sequences of complex valued functions defined for $t\in [0,\infty)$, $\hat{\theta}_0$ is a complex valued function of time $t\in [0,\infty)$.

We notice first that by Proposition \ref{prop5.2}, for initial data $(U_1(0), U_2(0), U_3(0),$ $\hat{\theta}_0(0))$ in a ball $\overline{B((0,0,0,\theta_\infty), \delta R)}\subset X$ there exists a unique vector function $(U_1, U_2, U_3, \hat{\theta}_0)$ in a ball $\overline{B((0,0,0,\theta_\infty), R)}\subset X$ being a fixed point of $\mathcal{F}$ defined in \eqref{kontrakcja}--\eqref{g}. We define $(n\hat{u}_n, \hat{v}_n, \hat{\theta}_n)^T := B_n^{-1} (U_{1,n},U_{2,n},U_{3,n})^T$ for $n \in \N$, where for $n \ge N_0$, with $N_0$ defined in \eqref{n0}, we have $B_n = C_n^T$ with $C_n$ from \eqref{cd}, while, for
$n < N_0$, $B_n$ is defined before \eqref{kontr1a}. For $n \ge N_0$ and $j=1,2,3,$ $U_{j,n}$ satisfies
\eqref{sysproj} with $F_{j,n}$ given by \eqref{sysprojf}. Then the definition of $B_n$ implies that \eqref{proj} is satisfied and
thus $(n\hat{u}_n, \hat{v}_n, \hat{\theta}_n)$ solves \eqref{nonlinsys} for $n \ge N_0$. For $n < N_0$ the definition of $B_n$
and \eqref{kontr1a} imply that $(n\hat{u}_n, \hat{v}_n, \hat{\theta}_n)$ solves \eqref{nonlinsys} as well. Hence,
$(n\hat{u}_n, \hat{v}_n, \hat{\theta}_n)$ is a solution to \eqref{nonlinsys} for $n \in \N$. In view of \eqref{kontr4} this means
that $\hat{u}_n$, $\hat{\theta}_n$, $n \in \N$, and $\hat{\theta}_0$ solve \eqref{f2} and that $\hat{u}_n^\prime = \hat{v}_n$
and that therefore indeed all these functions are real valued.

Next, we recall that by \eqref{eqsm2a}, in view of $(n\hat{u}_n, \hat{v}_n, \hat{\theta}_n)^T= B_n^{-1} (U_{1,n},U_{2,n},$ $U_{3,n})^T$, we obtain
\begin{equation}\label{eqsm21}
\max\{|n\hat{u}_n|, |\hat{v}_n|, |\hat{\theta}_n|\}\leq c_2 (|U_{1,n}|+ |U_{2,n}| + |U_{3,n}|), \qquad n \in \N.
\end{equation} 
We observe that for $j=1,2,3$ with $\alpha$ from \eqref{alpha} and $s \in (\frac{3}{4},1)$
\[
\sup_{t>0} e^{\alpha t}\sqrt{\sum_{n=1}^\infty n^{2s} |U_{j,n}(t)|^2}\leq |U_{j}|_s\leq R,
\]
hence
\begin{equation}\label{7.1}
\sqrt{\sum_{n=1}^\infty n^{2s} |U_{j,n}(t)|^2}\leq  R e^{-\alpha t} \quad\mbox{for all } t \ge 0.
\end{equation}
Next, we notice that by \eqref{f2}, \eqref{theta_in}, and \eqref{zbwszthe} we have $\hat{\theta}_0(t) \to \theta_\infty$ as
$t \to \infty$ and hence \eqref{f2}, $s > \frac{1}{2}$, the Cauchy-Schwarz inequality in $l^2$, \eqref{eqsm21}, and \eqref{7.1}
imply
\begin{align}\label{thet}
|\hat{\theta_0}(t)-\theta_\infty|
&=\left|\frac{\mu}{2}\int_t^\infty \sum_{l=1}^\infty \hat{\theta}_l(\sigma)l\hat{v}_l(\sigma) d\sigma\right|
\nonumber \\
&\le \frac{\mu}{2}\int_t^\infty \sum_{l=1}^\infty l^s |\hat{\theta}_l(\sigma)| l^s|\hat{v}_l(\sigma)| d\sigma
\nonumber \\
&\le \frac{\mu}{2}\int_t^\infty \left(\sum_{l=1}^\infty l^{2s} |\hat{\theta}_l(\sigma)|^2 \right)^{\frac{1}{2}}
\left(\sum_{l=1}^\infty l^{2s} |\hat{v}_l(\sigma)|^2 \right)^{\frac{1}{2}} d\sigma \nonumber \\
&\le \frac{\mu}{2}\int_t^\infty  \left[ \sum\limits_{j=1}^3 c_2 \left(\sum_{l=1}^\infty l^{2s} |U_{j,l}(\sigma)|^2
\right)^{\frac{1}{2}} \right]^2 d\sigma \nonumber \\
&\le \frac{9\mu c_2^2 R^2}{2} \int_t^\infty e^{-2\alpha \sigma} d \sigma = \frac{9\mu c_2^2 R^2}{4\alpha} e^{-2\alpha t},
\quad t \ge 0.
\end{align} 
Consequently, we have the exponential decay of $\hat{\theta}_0$ towards $\theta_\infty$. But, since $\int_0^\pi \theta(t,x) dx=\pi \hat{\theta}_0(t)$, we conclude that for $s \in (\frac{3}{4},1)$
\[
\|\theta(t,\cdot)-\theta_\infty\|_{H^s(0,\pi)}\leq Ce^{-\alpha t}.
\]
Indeed, by \eqref{eqsm21} and \eqref{7.1} we have
\begin{align}\label{thety}
\left(\sum_{l=1}^\infty l^{2s}|\hat{\theta}_l (t)|^2 \right)^{\frac{1}{2}}
&\leq c_2 \left(\sum_{l=1}^\infty l^{2s}(|U_{1,l}|+|U_{2,l}|+|U_{3,l}|)^2 \right)^{\frac{1}{2}} \nonumber \\
&\leq 3c_2 R e^{-\alpha t}, \quad t \ge 0.
\end{align} 
Consequently, since on $(0,\pi)$ the Fourier expansion is $\theta(t,\cdot)-\theta_\infty=\hat{\theta}_0(t)-\theta_\infty +\sum_{l=1}^\infty \hat{\theta}_l(t) \cos (lx)$, due to \eqref{thet} and \eqref{thety}, we have
\begin{equation}\label{temperatura}
\|\theta(t,\cdot)-\theta_\infty\|_{H^s(0,\pi)}^2= |\hat{\theta}_0(t)-\theta_\infty|^2+\sum_{l=1}^\infty l^{2s} |\hat{\theta}_l(t)|^2\leq c\, e^{-2\alpha t}.
\end{equation}
Similarly, by \eqref{eqsm21} and \eqref{7.1} we have
\begin{align}\label{position}
&\hspace*{-5mm} \|u_x(t,\cdot)\|_{H^s(0,\pi)}^2+\|u_t(t,\cdot)\|_{H^s(0,\pi)}^2 \nonumber \\
&= \sum_{l=1}^\infty l^{2s} |l\hat{u}_l(t)|^2 + \sum_{l=1}^\infty l^{2s} |\hat{v}_l(t)|^2 \nonumber \\
&\leq c \sum_{l=1}^\infty l^{2s}(|U_{1,l}|+|U_{2,l}|+|U_{3,l}|)^2 \leq c\, e^{-2\alpha t}.
\end{align}
To summarize, we see that for initial data $(u_0,v_0,\theta_0)$ such that $((u_0)_x,v_0,\theta_0)$ are in the ball considered in Proposition \ref{prop5.2}, the solution emanating from it decays exponentially towards $(0,0,\theta_\infty)$. But, by Proposition \ref{zbieznosc}, we see that for any data satisfying \eqref{regularity} (and positivity of the initial temperature), there exists a time $t_0$ such that $(u_x(t_0,\cdot), u_t(t_0,\cdot), \theta(t_0,\cdot))$ is contained in an arbitrary small ball around $(0,0,\theta_\infty)$, hence when passing to the new variables $(U_1,U_2,U_3,\hat{\theta}_0)$, we see that, at the intermittent time $t_0$, our solution is in $\overline{B((0,0,0,\theta_\infty), \delta R)}$. Next, \eqref{temperatura}, \eqref{position}, the Poincar\'e inequality $\|u(t,\cdot)\|_{L^2(0,\pi)}\leq c\|u_{x}(t,\cdot)\|_{L^2(0,\pi)}$ and simple interpolation are applicable to our solution and the exponential convergence in Theorem \ref{main} holds, the proof is finished.

At the end, let us comment on additional time decay we obtained. 
We notice the solution, obtained via a fixed point procedure, is of the form
\begin{align*}
U_{1,n}(t)&:=e^{(-n^2+\theta_\infty\mu^2) t} U_{1,n}(0)+\int_0^te^{(-n^2+\theta_\infty \mu^2)(t-\sigma)}F_{1,n}(\sigma)d\sigma,
\\
U_{2,n}(t)&:=e^{(-ni-\frac{\theta_\infty\mu^2}{2}) t} U_{2,n}(0)+\int_0^t e^{(-ni-\frac{\theta_\infty\mu^2}{2})(t-\sigma)}
F_{2,n}(\sigma)d\sigma, \\
U_{3,n}(t)&:=e^{(ni-\frac{\theta_\infty\mu^2}{2}) t} U_{3,n}(0)+\int_0^t e^{(ni-\frac{\theta_\infty\mu^2}{2})(t-\sigma)}
F_{3,n}(\sigma)d\sigma
\end{align*}
for $ n \ge N_0$, while for $n < N_0$ we have
$$\begin{pmatrix} U_{1,n}(t) \\ U_{2,n}(t) \\ U_{3,n}(t) \end{pmatrix}
:= B_n \left( e^{t A_{n,\theta_\infty}} B_n^{-1}\begin{pmatrix} U_{1,n}(0) \\ U_{2,n}(0) \\ U_{3,n}(0) \end{pmatrix}
+\int_0^t e^{(t-\sigma)A_{n,\theta_\infty}} g_n(\sigma) d\sigma \right)$$ 
Moreover, we also obtain that $\hat{\theta}_0(t)$ converges exponentially towards $\theta_\infty$, see \eqref{thet}. While in the case of $U_1$, an exponential time decay is not surprising, the same happens to $U_2$ and $U_3$, the additional factor $e^{-\frac{\theta_\infty\mu^2}{2}t}$ or $e^{-2\alpha t}$ appearing at linear parts of Duhamel's formula is not initially expected, since we deal with a mixed type problem involving the wave equation.

\subsection*{Acknowledgment}
J.J. was supported by ERC project INSOLIT. He thanks the Institute of Mathematics, Polish Academy of Sciences, for its hospitality.

\subsection*{Conflict of interest statement}
The authors have no conflicts of interest to declare that are relevant to the content of this article.

\subsection*{Data availability statement}
Data sharing not applicable to this article as no datasets were generated or analysed during the current study.



\end{document}